\theoremstyle{plain}
\newtheorem{thm}{Theorem}
\newtheorem{lem}[thm]{Lemma}
\newtheorem{cor}[thm]{Corollary}
\newtheorem{prop}[thm]{Proposition}
\theoremstyle{definition}
\newtheorem{defn}{Definition}
\newtheorem{exa}{Example}
\title{Chromatic numbers from edge ideals:\\ Graph classes with vanishing syzygies are polynomially $\chi$-bounded}
\author{ Alexander Engström\\ alexander.engstrom@fra.se }
\date\today
\begin{document}
\maketitle

\begin{abstract}
The chromatic number $\chi$ of a graph is bounded from below by its clique number $\omega,$ but it can be arbitrary large. Perfect graphs are defined by $\chi=\omega$ for all induced subgraphs. An interesting relaxation are $\chi$-bounded graph classes, where $\chi\leq f(\omega).$ It is not always possible to achieve this with a polynomial $f.$

The edge ideal $I_G$ of a graph $G$ is generated by monomials $x_ux_v$ for each edge $uv$ of $G.$ The bi-graded betti numbers $\beta_{i,j}(I)$ are central algebraic geometric invariants. We study the graph classes where for some fixed $i,j$ that syzygy vanishes, that is, $\beta_{i,j}(I_G)=0.$ 

We prove that $\chi\leq f(\omega),$ where $f$ is a polynomial of degree $2j-2i-4.$
For the elementary special case $\beta_{i,2i+2}(I_G)=0,$ this  amounts to that $(i+1)K_2$--free graphs are ${\omega-1+2i \choose 2i}$--colorable, improving on an old combinatorial result by Wagon. 
We also show that triangle-free graphs with $\beta_{i,j}(I_G)=0$ are $(j-1)$--colorable.

Complexity wise, we show that these colorings can be derived in time $O(n^3)$ for graphs on $n$ vertices. Moreover, we show that for almost all graphs with parabolic $i,j,$ there are better bounds on $\chi.$
\end{abstract}

\section{Introduction}

For a graph $G,$ the largest number of vertices in a clique is a natural lower bound for its chromatic number: $\omega(G) \leq \chi(G).$ In general that inequality may be arbitrary bad, as there are triangle-free graphs of any chromatic number. On the other hand, a perfect graph is characterized by that $\omega(H) = \chi(H)$ for all of its induced subgraphs $H.$ The last decades have seen an intense activity in extending our understanding of how to measure and categorize the perfectness of graphs. A class of graphs is \emph{$\chi$--bounded} by the function $f$ if $\chi(G) \leq f(\omega(G))$ for all graphs $G$ in the class. The central Gyárfás-Sumner conjecture, which has been open for fifty years, is that for any forest $F,$ the $F$--free graphs are $\chi$--bounded.
For established $\chi$--bounded graph classes the common next question is if the bounding function is polynomial or not. In general it can be arbitrarily bad \cite{REF1}. Examples of polynomially $\chi$--bounded graph classes are even-hole-free graphs \cite{CS23}, circle graphs \cite{DM21, Dav22} and bounded twin-width graphs \cite{BT25}.

For a graph $G$ on vertex set $V$ and edge set $E,$ its edge ideal in $\mathbf{k}[x_v \mid v\in V]$ is generated by the monomials $x_ux_v$ for $uv \in E(G).$
In discussing betti numbers $\beta_{i,j}(I_G)$ of edge ideals, we assume from now that $j\geq 2i+2,$ because otherwise they are trivially zero. For general combinatorial considerations regarding edge ideals, see for example \cite{de} or \cite{EO23}.
From a geometric stand point, the first interesting case is if almost all $\beta_{i,j}(I_G)$ vanishes, except for if $j=i+2.$ Then the ideal is said to have a \emph{linear} resolution. In a seminal paper building a bridge between algebra and combinatorics, Fr\"oberg proved \cite{fro} that an edge ideal has a linear resolution if and only if the graph is the complement of a chordal graph. The complement of a chordal graph is perfect, setting the backdrop for this paper. In Fröberg's setting an infinite number syzygyies were set to vanish, but we study when only a single one does. The last years a number of interesting combinatorial results have been established with this methodology, for example Randriambololona \cite{ran} won the best paper award of Eurocrypt 2025 investigating the post-quantum cryptographic algorithm McEliece.

The main result of this paper is proved in Section 4: Corollary \ref{mainCor1} states that the class of graphs with $ \beta_{i,j}(I_G) = 0$ is $\chi$--bounded by
\[ {\omega-1+2(j-i-2) \choose 2(j-i-2)} +  {j-2 \choose 2(j-i-2)+1}, \] 
which is a polynomial in $\omega$ of degree $2(j-i-2).$ For triangle-free graphs the slightly stronger result  $\chi(G) \leq j-1$ is proved in Corollary \ref{mainCor2}.

In Section 5 we show in Corollary \ref{asymBound} that if $(j-i)^2 \geq j+i+2$, then almost all graphs $G$ with $ \beta_{i,j}(I_G) = 0$ are $\chi$-bounded by
\[ {\omega-1+2(j-i-2) \choose 2(j-i-2)}. \] 

In the penultimate Section 6 we show in Corollary \ref{n3} that the graph colorings in our main result may be done in time $O(n^3)$ for graphs on $n$ vertices. For the triangle-free case, it is in time $O(n^2)$ according to Corollary \ref{n2}. Some further considerations on shorter proofs with worse chromatic number bounds are recorded in the final Section 7.

Before reaching the main part of our paper, some elementary considerations on $\chi$-boundedness are recorded in Section 2. It is  included due to the way more technical versions of the same arguments in Section 3 that aims towards applications in the algebraic setting. Although it is all fairly elementary, it includes an improvement of an old influential result by Wagon \cite{W80}, proving that $(i+1)K_2$--free graphs are ${\omega-1+2i \choose 2i}$--colorable.

\section{Ordinary polynomially $\chi$--bounded graph classes}

The chromatic number of a graph $G$ is denoted by $\chi(G).$ The order of the largest clique of $G$ is denoted by $\omega(G).$

\begin{defn} An hereditary class of graphs is \emph{$\chi$-bounded by $f$} if $\chi(G)\leq f(\omega(G))$ for all graphs $G$ in the class.
\end{defn}

The disjoint union of two graphs $G$ and $H$ is denoted $G \cup H$ and multiples by $pG=\cup_p G.$
The join $G+H$ is $G \cup H$ with edges added between all of $G$ and $H.$ For a graph $G$ and a set $\bf{H}$ of graphs, let $ G \cup {\bf H  } = \{ G \cup H \mid H \in {\bf H} \}$ and  $ G + {\bf H  } = \{ G + H \mid H \in {\bf H} \}.$ The induced subgraphs on $U\subseteq V(G)$ of $G$ is denoted $G[U].$ Basic graphs on $n$ vertices are the complete graph $K_n,$ the cycle $C_n$ and the path $P_n.$

Wagon \cite{W80} proved the following influential result:
\begin{prop}\label{wagon} If $f_1^{\mathrm{W}}(\omega)=1$ and $f_{n+1}^{\mathrm{W}}(\omega)={ \omega \choose 2} f_n^{\mathrm{W}}(\omega) + \omega,$ then the class of $pK_2$--free graphs is $\chi$--bounded by $f_p^{\mathrm{W}}(\omega).$ In particular, the class of $2K_2$--free graphs is $\chi$--bounded by ${\omega +1 \choose 2}.$
\end{prop}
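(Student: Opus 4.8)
The plan is to prove Wagon's result by induction on $p$, and the heart of the argument is a local-to-global structure: in a $pK_2$-free graph, after deleting the closed neighborhood of a single vertex we obtain a graph that, if it contains an edge at all, must have its edges concentrated in a controlled way. More precisely, I would first handle the base case $p=1$: a $1K_2$-free graph has no edges, so $\chi = 1 = f_1^{\mathrm{W}}(\omega)$ (with the convention that the empty graph and edgeless graphs have $\omega \le 1$). Then for the inductive step, fix a $pK_2$-free graph $G$ with $\omega(G)=\omega$ and pick a vertex $v$ of maximum degree. The key claim is that $H := G \setminus N[v]$ is $(p-1)K_2$-free: indeed any $(p-1)K_2$ inside $H$ uses no vertex adjacent to $v$ and no vertex equal to $v$, so together with an edge incident to $v$ it would yield a $pK_2$ in $G$ (we may assume $v$ has an edge, i.e.\ $\deg v \ge 1$, otherwise $G$ is edgeless and we are done). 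So by induction $\chi(H) \le f_{p-1}^{\mathrm{W}}(\omega(H)) \le f_{p-1}^{\mathrm{W}}(\omega)$, since $\omega(H) \le \omega$ and $f_{p-1}^{\mathrm{W}}$ is monotone.

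Next I would bound the chromatic number of $G[N[v]]$, the closed neighborhood of $v$. The vertices of $N(v)$ induce a graph with no clique of size $\omega$ — because together with $v$ such a clique would be a $K_{\omega+1}$ — so actually $G[N(v)]$ has clique number at most $\omega - 1$. But more is true and more is needed: a maximum-degree vertex whose neighborhood still had large chromatic number would not obviously give a bound, so the real input should be that $G[N(v)]$ is itself $(p-1)K_2$-free (any $(p-1)K_2$ in $N(v)$ plus, say, the edge $vu$ for some $u \notin$ that matching... wait, one must be careful that $v$'s partner is outside $N(v)$, which it need not be). The cleaner route, and I think the route the paper intends, is: instead of using $N[v]$ directly, observe that in a $pK_2$-free graph a single edge $e = xy$ has the property that $G \setminus (N[x] \cup N[y])$ is $(p-1)K_2$-free, and then one colors $G[N[x]\cup N[y]]$ by noting it has bounded clique number. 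Hmm, but $G[N[x]\cup N[y]]$ can have clique number $\omega$. So the recursion must instead peel off the neighborhood of a \emph{vertex} and recurse on the neighborhood as a graph with smaller clique number.

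Let me restate the intended recursion cleanly: pick any vertex $v$; then $G[N(v)]$ has $\omega(G[N(v)]) \le \omega - 1$, and—crucially—$G[N(v)]$ is still $pK_2$-free (it is an induced subgraph), so one does \emph{not} recurse on $p$ here but rather one needs a separate handle. Putting the two pieces together: $\chi(G) \le \chi(G[N[v]]) + \chi(G \setminus N[v])$ is false in general, but $\chi(G) \le \chi(G[N(v)]) + \chi(G \setminus N[v])$ holds if we can properly color the two induced subgraphs on disjoint vertex sets $N(v)$ and $V \setminus N[v]$ with disjoint palettes and then handle $v$ — actually $v$ is adjacent only to $N(v)$, so we reuse a color from the $V\setminus N[v]$ palette that does not appear on $N(v)$, or simply add one color for $v$. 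So $\chi(G) \le \chi(G[N(v)]) + \chi(G\setminus N[v]) + 1$. Now $\chi(G\setminus N[v]) \le f_{p-1}^{\mathrm{W}}(\omega)$ by the induction on $p$ as established, and we must bound $\chi(G[N(v)])$. For this, iterate the same inequality on $G[N(v)]$, which has clique number $\le \omega-1$; repeating $\omega-1$ times (each step dropping the clique number by one on the neighborhood branch) terminates, and each branch contributes an $f_{p-1}^{\mathrm{W}}(\omega)$ term plus a $+1$. Summing the at-most-$\binom{\omega}{2}$ leaves (this is where the $\binom{\omega}{2}$ factor in the recurrence $f_{n+1}^{\mathrm{W}}(\omega) = \binom{\omega}{2} f_n^{\mathrm{W}}(\omega) + \omega$ comes from: the total number of neighborhood-peeling nodes across the recursion tree of depth $\omega-1$ with branching governed by the clique number) yields the claimed recurrence.

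The main obstacle, and the step I would spend the most care on, is the bookkeeping that turns the two-way split $\chi(G) \le \chi(G[N(v)]) + \chi(G \setminus N[v]) + 1$ into exactly the coefficient $\binom{\omega}{2}$ and the additive $+\omega$ of Wagon's recurrence: one must set up an auxiliary induction on $\omega$ (for fixed $p$) showing $\chi(G) \le \binom{\omega}{2} f_{p-1}^{\mathrm{W}}(\omega) + \omega$, where peeling $N(v)$ reduces $\omega$ and the $\binom{\omega}{2} = \binom{\omega-1}{2} + (\omega-1)$ identity exactly matches combining the neighborhood bound (coefficient $\binom{\omega-1}{2}$, giving via $G[N(v)]$ with clique number $\omega-1$ a contribution $\binom{\omega-1}{2}f_{p-1}^{\mathrm{W}}(\omega) + (\omega-1)$) with the $V\setminus N[v]$ bound ($f_{p-1}^{\mathrm{W}}(\omega)$, contributing the remaining $(\omega-1)$ copies after accounting, plus the $+1$). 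The specialization $p=2$ then gives $f_2^{\mathrm{W}}(\omega) = \binom{\omega}{2}\cdot 1 + \omega = \binom{\omega}{2} + \omega = \binom{\omega+1}{2}$, as claimed. I would double-check the edge cases $\omega \le 1$ and the degenerate graphs at the leaves to confirm the constants line up.
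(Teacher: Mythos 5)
There is a genuine gap, and it sits exactly at the claim you flagged and then walked past. The assertion that $H := G \setminus N[v]$ is $(p-1)K_2$--free is false: an induced $(p-1)K_2$ in $H$ consists of vertices non-adjacent to $v$, but they may well be adjacent to $v$'s partner $u \in N(v)$, so $\{v,u\}$ plus the matching need not induce a $pK_2$. A concrete counterexample is $G=C_5$ with $p=2$: the $5$--cycle is $2K_2$--free, yet deleting the closed neighborhood of any vertex leaves an edge, i.e.\ a $1K_2$. The correct statement requires \emph{both} endpoints of an edge to be anticomplete to the remainder: for an edge $xy$, the graph $G\setminus(N[x]\cup N[y])$ is $(p-1)K_2$--free. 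You state this cleaner fact, correctly observe that $G[N[x]\cup N[y]]$ can still have clique number $\omega$ so it does not immediately give a recursion, and then revert to the single-vertex version ``as established'' --- but it was never established. Moreover, even granting the false claim, your recursion $h(\omega)\le h(\omega-1)+f_{p-1}^{\mathrm W}(\omega)+1$ telescopes to roughly $\omega f_{p-1}^{\mathrm W}(\omega)+\omega$, and the identity $\binom{\omega}{2}=\binom{\omega-1}{2}+(\omega-1)$ you invoke would require $(\omega-1)$ copies of $f_{p-1}^{\mathrm W}(\omega)$ to appear at each peeling step, which your decomposition does not produce; the arithmetic does not close.

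The actual argument (Wagon's, which the paper mirrors and refines in the proof of Lemma~\ref{K2extend}) resolves both problems at once by working relative to a maximum clique $\{1,\dots,\omega\}$ rather than a single vertex or edge. By maximality, every outside vertex has a non-neighbor in the clique. Those with exactly one non-neighbor $i$ form, together with $i$, an independent set (otherwise the clique could be enlarged), accounting for the additive $\omega$. Those with at least two non-neighbors are assigned to one of the $\binom{\omega}{2}$ classes $C_{i,j}$ indexed by a pair of non-neighbors $i<j$; since $ij$ is an edge of $G$ with both endpoints anticomplete to $C_{i,j}$, any induced $(p-1)K_2$ in $C_{i,j}$ would extend by $ij$ to an induced $pK_2$, so each $G[C_{i,j}]$ is $(p-1)K_2$--free and is colorable with $f_{p-1}^{\mathrm W}(\omega)$ colors. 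Summing gives $\binom{\omega}{2}f_{p-1}^{\mathrm W}(\omega)+\omega$ exactly. The device you are missing is that the ``free edge'' used to build the forbidden $pK_2$ must be chosen inside the clique so that both of its endpoints avoid the class being recursed on.
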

The proof of Lemma \ref{K2extend} below builds on the proof of Proposition \ref{wagon} by Wagon, who attributes the basic idea to Erdös and Hajnal. The main difference between the proofs is that the chromatic number for some induced subgraphs $G[C_{i,j}]$ are better estimated using an idea by Bharati and Chodum \cite{BC18}. But the main point of Lemma \ref{K2extend} is actually that it serves as a model for the more technical proof of Theorem \ref{bigthm}, which is crucial to derive the main results of this paper.

\begin{lem}\label{K2extend}
If ${\bf H}$ is a set of graphs and the class of ${\bf H}$--free graphs is  $\chi$-bounded by $f_{\bf H},$ then the class of  $K_2 \cup {\bf H  } $--free graphs is $\chi$-bounded by
\[
f_{K_2 \cup {\bf H  } }(\omega) = \sum_{k=1}^{\omega}  (\omega-k+1)  f_{{\bf H  } }(k).
\]
\end{lem}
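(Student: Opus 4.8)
The plan is to follow Wagon's argument for Proposition~\ref{wagon}, sharpened at exactly the step flagged above; somewhat pleasantly, no induction on $\omega$ is needed, since $V(G)$ can be decomposed in one go. Fix a graph $G$ in the $K_2\cup{\bf H}$--free class, set $\omega=\omega(G)$, and assume $f_{\bf H}$ is non-decreasing (replace it by its running maximum otherwise); note $f_{\bf H}(1)\ge 1$ since the single-vertex graph is ${\bf H}$--free. Choose a maximum clique $K=\{t_1,\dots,t_\omega\}$ together with an arbitrary linear order $t_1<\cdots<t_\omega$ on its vertices, and for $v\in V(G)\setminus K$ let $S(v)=K\setminus N(v)$ be its set of non-neighbours in $K$, which is nonempty because $K$ is maximum. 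Partition $V(G)\setminus K$ by recording the two smallest-indexed vertices of $S(v)$: for $1\le i\le\omega$ set $U_i=\{v:S(v)=\{t_i\}\}$, and for $1\le i<j\le\omega$ set $C_{i,j}=\{v:|S(v)|\ge 2\text{ and the two smallest-indexed elements of }S(v)\text{ are }t_i,t_j\}$. Every vertex outside $K$ lies in exactly one of these sets.

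Two observations carry the argument. First, $\{t_i\}\cup U_i$ is independent: $t_i$ is non-adjacent to all of $U_i$ by definition, and two vertices of $U_i$ cannot be adjacent, for together with $K\setminus\{t_i\}$ (to which every vertex of $U_i$ is complete) they would form a clique of size $\omega+1$. Hence $K\cup\bigcup_i U_i=\bigcup_{i=1}^{\omega}(\{t_i\}\cup U_i)$ is properly coloured with the $\omega$ colours $1,\dots,\omega$. Second, fix $i<j$. Every vertex of $C_{i,j}$ is non-adjacent to both endpoints of the edge $t_it_j$ of $K$, so an induced copy of some $H\in{\bf H}$ inside $G[C_{i,j}]$ would, together with $t_it_j$, form an induced $K_2\cup H$ in $G$; thus $G[C_{i,j}]$ is ${\bf H}$--free. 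Moreover, since the second-smallest index occurring in $S(v)$ is $j$, every vertex of $C_{i,j}$ is adjacent to each of $t_1,\dots,t_{j-1}$ except $t_i$, that is, to a clique of size $j-2$ inside $K$, and therefore $\omega(G[C_{i,j}])\le\omega-(j-2)=\omega-j+2$, whence $\chi(G[C_{i,j}])\le f_{\bf H}(\omega-j+2)$. It is this last, size-dependent estimate that is the Bharati--Chodum idea \cite{BC18}: Wagon's argument has only the uniform bound $\chi\le f_{\bf H}(\omega)$ for each such piece and pays $\binom{\omega}{2}f_{\bf H}(\omega)$, while here the cost decreases as $j$ grows.

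To finish, colour $K\cup\bigcup_i U_i$ with the $\omega$ colours above and give each $G[C_{i,j}]$ its own private palette of $f_{\bf H}(\omega-j+2)$ colours. The number of colours used is
\[
\omega+\sum_{1\le i<j\le\omega}f_{\bf H}(\omega-j+2)=\omega+\sum_{j=2}^{\omega}(j-1)\,f_{\bf H}(\omega-j+2)=\omega+\sum_{k=2}^{\omega}(\omega-k+1)\,f_{\bf H}(k),
\]
where the last equality is the substitution $k=\omega-j+2$; since $f_{\bf H}(1)\ge 1$ this is at most $\sum_{k=1}^{\omega}(\omega-k+1)\,f_{\bf H}(k)=f_{K_2\cup{\bf H}}(\omega)$, which is the claim. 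I expect the only delicate point to be the clique-size bound $\omega(G[C_{i,j}])\le\omega-j+2$: one has to exhibit the right clique $\{t_1,\dots,t_{j-1}\}\setminus\{t_i\}$ inside $K$ and check that all of $C_{i,j}$ is complete to it, which is precisely what forces the partition to use the \emph{two} smallest non-neighbours rather than a single edge of $K$, as in Wagon's original argument. Everything else is bookkeeping, and this is why the proof doubles as a template for the more technical Theorem~\ref{bigthm}.
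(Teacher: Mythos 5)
Your proof is correct and follows essentially the same route as the paper's: the same partition of $V(G)\setminus K$ by the two smallest non-neighbours in a maximum clique (you define $C_{i,j}$ directly where the paper obtains it by subtracting earlier $C_{k,l}'$ sets, but the sets coincide), the same independence of $\{t_i\}\cup U_i$, the same clique-extension bound $\omega(G[C_{i,j}])\le\omega-j+2$, and the same final summation. The only difference is cosmetic bookkeeping (you bound the $U_i$ contribution by $\omega\le\omega f_{\bf H}(1)$ at the end, and you make the harmless monotonicity assumption on $f_{\bf H}$ explicit where the paper leaves it implicit).
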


\begin{proof}
Let $G$ be a $K_2 \cup {\bf H  } $--free graph with a clique on the vertices $1,2,\ldots, \omega=\omega(G).$
For $1\leq i < j \leq \omega$ define
\[
C_{i,j}' = \{ v\in V(G) \setminus \{1,2,\ldots, \omega\} \mid vi,vj \not\in E(G).  \}
\]
For any induced subgraph $H$ of $C_{i,j}'$, the graph $K_2 \cup H$ on $\{i,j\} \cup V(H)$ is an induced subgraph of $G.$ By assumption, $G$ is a $K_2 \cup {\bf H  } $--free graph, and, thus, the induced subgraph of $G$ on $C_{i,j}' $ is an  ${\bf H  } $--free graph.

Set
\[
C_{i,j} = C_{i,j}' \setminus  \left(\,\, \bigcup_{k=1}^{i-1} \bigcup_{l=k+1}^\omega C_{k,l}'   \quad \cup \quad  \bigcup_{l=i+1}^{j-1} C_{i,l}'  \,\,  \right). 
\]
If $v\in C_{i,j}$ and $1\leq k < i$ then $kv \in E(G)$ as 
\[ v\in C_{i,j}' \Rightarrow iv \not\in E(G) \textrm{ and }  jv \not\in E(G) \]
and
\[ v\not\in C_{k,j}' \Rightarrow kv \in E(G) \textrm{ or }  jv \in E(G). \]
If $v\in C_{i,j}$ and $i< l < j$ then $lv \in E(G)$ as 
\[ v\in C_{i,j}' \Rightarrow iv \not\in E(G) \textrm{ and }  jv \not\in E(G) \]
and
\[ v\not\in C_{i,l}' \Rightarrow iv \in E(G) \textrm{ or }  lv \in E(G). \]
It follows that any clique in $G[C_{i,j}]$ extends to a larger clique in $G$ by the $j-2$ vertices
$\{1,2,\ldots,i-1\} \cup \{ i+1,i+2,\ldots, j-1 \}.$ 
Thus
\[ \omega(G[C_{i,j}]) \leq \omega(G)-j+2\]
and
\[ \chi(G[C_{i,j}]) \leq f_{{\bf H  } }( \omega(G)-j+2 ). \]

So far we have partitioned the vertices in $G$ outside the clique on $1,2,\ldots, \omega=\omega(G)$  that has at least two non-edges to the clique. There cannot be a vertex with no non-edges to the clique, as the clique is maximal in $G.$ 
For $1\leq i \leq \omega(G)$ set
\[ D_i'
= \left\{ v\in V(G) \setminus \{1,2,\ldots, \omega\} \mid vi \not\in E(G); \{v1,v2, \ldots, v\omega \} \setminus vi \subseteq E(G) 
 \right \}
\]
If there would be an edge between two vertices $u$ and $v$ in $D_i',$ then the vertices
\[ 1,2, \ldots, i-1, i+1, \ldots, \omega, u, v \]
would constitute a clique on $\omega+1$ vertices, which is a contradiction. Thus, $D_i'$ is independent. As there are no edges from $i$ to vertices of $D_i',$ the set
\[ D_i = \{i\} \cup D_i' \]
is independent for all $i.$ We only need one color for an independent set, and
\[ \chi(G[D_i]) = 1 \leq  f_{{\bf H  } }( 1). \]
Coloring each part of the partition
\[ V(G) \quad = \quad   \bigcup_{i=1}^\omega D_{i}      \quad \cup \quad    \bigcup_{1\leq i < j \leq \omega}  C_{i,j}  \]
gives
\[\begin{array}{rcl} 
\chi(G) & \leq & \displaystyle  \sum_{i=1} ^\omega \chi(G[D_i]) +  \sum_{1\leq i < j \leq \omega}  \chi(G[C_{i,j}]) \\
& \leq & \displaystyle  \sum_{i=1} ^\omega  f_{{\bf H  } }( 1) +  \sum_{1\leq i < j \leq \omega}   f_{{\bf H  } }( \omega-j+2 ) \\
& = & \displaystyle  \omega  f_{{\bf H  } }( 1) +  \sum_{j=2}^\omega   (j-1)f_{{\bf H  } }( \omega-j+2 ) \\
& = & \displaystyle  \sum_{j=2}^{\omega+1}   (j-1)f_{{\bf H  } }( \omega-j+2 ) \\
& = & \displaystyle \sum_{k=1}^{\omega}  (\omega-k+1)  f_{{\bf H  } }(k)
\end{array}
\]
\end{proof}

This is completely elementary 
\begin{lem}\label{doubleBumpLemma}
For non-negative integers $n$ and $m,$
\[ 
\sum_{i=0}^n   (n+1-i)    {i + m   \choose m } =  {n + m+2   \choose m+2}.
\]
\end{lem}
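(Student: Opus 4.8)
The plan is to reduce the identity to two applications of the hockey-stick identity $\sum_{i=0}^{k}\binom{i+m}{m}=\binom{k+m+1}{m+1}$, which itself is a one-line induction from Pascal's rule. The key observation is that the weight $n+1-i$ is nothing but the number of integers $k$ with $i\le k\le n$; writing it as $\sum_{k=i}^{n}1$ converts the weighted sum into a plain double sum over a triangular index set, after which the order of summation can be exchanged.

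Concretely, I would first write
\[
\sum_{i=0}^{n}(n+1-i)\binom{i+m}{m}
=\sum_{i=0}^{n}\sum_{k=i}^{n}\binom{i+m}{m},
\]
where the index set is $\{(i,k):0\le i\le k\le n\}$, and then swap the summations to get
\[
\sum_{k=0}^{n}\sum_{i=0}^{k}\binom{i+m}{m}.
\]
By the hockey-stick identity the inner sum equals $\binom{k+m+1}{m+1}$, so the whole expression becomes $\sum_{k=0}^{n}\binom{k+m+1}{m+1}$, and a second application of hockey-stick — now with $m$ replaced by $m+1$ — yields $\binom{n+m+2}{m+2}$, which is the claim.

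An equally short alternative is induction on $n$: the base case $n=0$ reads $\binom{m}{m}=\binom{m+2}{m+2}$, and the step from $n$ to $n+1$ adds exactly $\sum_{i=0}^{n+1}\binom{i+m}{m}=\binom{n+m+2}{m+1}$ to the left-hand side, while $\binom{n+m+2}{m+2}+\binom{n+m+2}{m+1}=\binom{n+m+3}{m+2}$ by Pascal's rule. There is no genuine obstacle here; the only point requiring care is the bookkeeping of the summation limits in the index swap (or, in the inductive version, tracking which binomial coefficient changes). I would present the double-sum argument, since it makes transparent the ``two hockey-sticks'' structure of the identity that is presumably what is needed when it is invoked later.
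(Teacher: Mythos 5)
Your proof is correct, but it takes a different route from the paper's. The paper proves the identity by a single double-counting argument: the right-hand side counts the $(m+2)$-element subsets of $\{0,1,\ldots,n+m+1\}$, and bucketing these subsets by the value $m+i$ of their \emph{second largest} element gives $\binom{i+m}{m}$ choices for the $m$ smallest elements and $n+1-i$ choices for the largest, which is exactly the left-hand side. You instead expand the weight $n+1-i$ as $\sum_{k=i}^{n}1$, swap the order of summation, and apply the hockey-stick identity twice; your bookkeeping of the index set $\{(i,k): 0\le i\le k\le n\}$ and of the shift $m\mapsto m+1$ in the second application is all correct, and your inductive alternative also checks out. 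The two arguments are close cousins — the paper's own Lemma on $\sum_{i=0}^{n}\binom{m+i}{m}=\binom{n+m+1}{m+1}$ is precisely the hockey-stick identity proved by bucketing on the largest element, so your ``two hockey-sticks'' decomposition is what you get by performing the paper's count in two stages. The paper's version is a one-step bijective explanation of why the answer is a single binomial coefficient; yours is more mechanical but makes the reduction to a standard identity explicit. Either is acceptable.
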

\begin{proof} The right hand side counts the number of order $m+2$ subsets of $\{0,1,\ldots,n+m+1\}.$ Put those subsets in buckets depending on the value of the second largest element of each subset. The value of the second largest element is $m+i$ for $0\leq i \leq n.$ There are $ {i + m   \choose m } $ ways to select the $m$ smallest elements and $ (n+1-i) $ ways to select the largest element. Summing up the sizes of the buckets gives the equality.
\end{proof}

\begin{thm}\label{pK2extend}
If ${\bf H}$ is a set of graphs and the class of ${\bf H}$--free graphs is  $\chi$-bounded by $f_{\bf H},$ then the class of $pK_2 \cup {\bf H  } $--free graphs is $\chi$-bounded by
\[
f_{pK_2 \cup {\bf H  } }(\omega) = \sum_{k=1}^{\omega}   {\omega-k + 2p-1  \choose 2p-1}    f_{{\bf H  } }(k).
\]
\end{thm}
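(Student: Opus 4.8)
The plan is to induct on $p$, peeling off one disjoint edge at a time and feeding the result back into Lemma \ref{K2extend}. The base case $p=1$ is precisely Lemma \ref{K2extend}: since $\binom{\omega-k+2\cdot 1-1}{2\cdot 1-1}=\binom{\omega-k+1}{1}=\omega-k+1$, the bounding function $f_{K_2\cup\mathbf H}$ provided there agrees termwise with the claimed $f_{pK_2\cup\mathbf H}$ when $p=1$.

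For the inductive step, suppose the class of $pK_2\cup\mathbf H$--free graphs is $\chi$--bounded by $f_{pK_2\cup\mathbf H}$ as in the statement. The key observation is the identity of graph sets $(p+1)K_2\cup\mathbf H=K_2\cup(pK_2\cup\mathbf H)$: adjoining one further disjoint edge to every member of $pK_2\cup\mathbf H$ yields exactly $(p+1)K_2\cup\mathbf H$. Applying Lemma \ref{K2extend} with the role of ``$\mathbf H$'' played by the set $pK_2\cup\mathbf H$ then shows that the class of $(p+1)K_2\cup\mathbf H$--free graphs is $\chi$--bounded by the function $\omega\mapsto\sum_{k=1}^{\omega}(\omega-k+1)\,f_{pK_2\cup\mathbf H}(k)$.

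It then remains to check that this iterated expression collapses to the claimed closed form, i.e. that
\[
\sum_{k=1}^{\omega}(\omega-k+1)\sum_{l=1}^{k}\binom{k-l+2p-1}{2p-1}f_{\mathbf H}(l)=\sum_{l=1}^{\omega}\binom{\omega-l+2p+1}{2p+1}f_{\mathbf H}(l).
\]
Here I would exchange the order of summation (Fubini for finite sums) to pull out $f_{\mathbf H}(l)$, reducing the claim to the purely numerical identity $\sum_{k=l}^{\omega}(\omega-k+1)\binom{k-l+2p-1}{2p-1}=\binom{\omega-l+2p+1}{2p+1}$ for each fixed $l$. Substituting $n=\omega-l$ and $i=k-l$ turns this into $\sum_{i=0}^{n}(n+1-i)\binom{i+(2p-1)}{2p-1}=\binom{n+(2p-1)+2}{(2p-1)+2}$, which is exactly Lemma \ref{doubleBumpLemma} with $m=2p-1$. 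Since $2(p+1)-1=2p+1$, this is the asserted bound at level $p+1$, closing the induction.

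The argument has no genuine obstacle once the decomposition $(p+1)K_2\cup\mathbf H=K_2\cup(pK_2\cup\mathbf H)$ is spotted, so the combinatorial heart is entirely absorbed into Lemma \ref{doubleBumpLemma}. The only points requiring care are that $f_{\mathbf H}$ is not assumed monotone, so it must be kept inside the summations throughout rather than estimated, and that the two index shifts feeding into Lemma \ref{doubleBumpLemma} must be carried out with the off-by-one conventions lined up correctly.
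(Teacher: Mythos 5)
Your proof is correct and is essentially identical to the paper's: both induct on $p$ by applying Lemma \ref{K2extend} with $\mathbf H$ replaced by the smaller prohibited family, swap the order of summation, and invoke Lemma \ref{doubleBumpLemma} to collapse the double sum (the paper steps from $p-1$ to $p$ with $m=2p-3$ where you step from $p$ to $p+1$ with $m=2p-1$, a purely cosmetic difference). No further comment is needed.
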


\begin{proof}
For $p=1$ this is exactly Lemma \ref{K2extend}. Now we proceed by induction on $p>1.$ It is known that 
\[
f_{(p-1)K_2 \cup {\bf H  } }(\omega) = \sum_{k=1}^{\omega}   {\omega-k + 2(p-1)-1  \choose 2(p-1)-1}    f_{{\bf H  } }(k) =  \sum_{k=1}^{\omega}   {\omega-k + 2p-3  \choose 2p-3}    f_{{\bf H  } }(k) 
\]
is $\chi$-bounding the class of $(p-1)K_2 \cup {\bf H  }$--free graphs.
According to Lemma \ref{K2extend},
\[
\begin{array}{rcl}
f_{pK_2 \cup {\bf H  } }(\omega) &= & \displaystyle \sum_{k=1}^{\omega}  (\omega-k+1)  f_{ (p-1)K_2 \cup  {\bf H  } }(k) \\
&= & \displaystyle \sum_{k=1}^{\omega}  (\omega-k+1)   \sum_{l=1}^{k}   {k-l + 2p-3  \choose 2p-3}    f_{{\bf H  } }(l) \\
&= & \displaystyle \sum_{k=1}^{\omega} \sum_{l=1}^{k}   (\omega-k+1)    {k-l + 2p-3  \choose 2p-3}    f_{{\bf H  } }(l) \\
&= & \displaystyle \sum_{l=1}^{\omega} \sum_{k=l}^{\omega}   (\omega-k+1)    {k-l + 2p-3  \choose 2p-3}    f_{{\bf H  } }(l) \\
\end{array}
\]
is $\chi$-bounding the class of $pK_2 \cup {\bf H  }$--free graphs. According to Lemma \ref{doubleBumpLemma} with  $m=2p-3, $ and $i=k-l$ and $n=\omega-l,$
\[ 
\sum_{k=l}^{\omega}   (\omega-k+1)    {k-l + 2p-3  \choose 2p-3} =  {\omega-l + 2p-1  \choose 2p-1}
\]
and
\[
\begin{array}{rcl}
f_{pK_2 \cup {\bf H  } }(\omega) &= &  \displaystyle \sum_{l=1}^{\omega}  {\omega-l + 2p-1  \choose 2p-1}    f_{{\bf H  } }(l) \\
\end{array}
\]
\end{proof}

Another elementary lemma:

\begin{lem}\label{singleBumpLemma}
$ \sum_{i=0}^n {m+i \choose m} = { n+m+1 \choose m+1 }. $
\end{lem}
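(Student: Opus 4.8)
The plan is to mirror the bijective bucketing argument used for Lemma \ref{doubleBumpLemma}. The right-hand side $\binom{n+m+1}{m+1}$ counts the order $m+1$ subsets of $\{0,1,\ldots,n+m\}$. I would sort these subsets into buckets according to the value of their largest element, which necessarily ranges over $m+i$ for $0\leq i\leq n$ (it must be at least $m$ since there are $m$ smaller elements below it, and at most $n+m$). For a fixed largest element $m+i$, the remaining $m$ elements are an arbitrary order $m$ subset of $\{0,1,\ldots,m+i-1\}$, a set of size $m+i$, so there are $\binom{m+i}{m}$ such subsets in that bucket. Summing the bucket sizes over $i$ from $0$ to $n$ yields $\sum_{i=0}^n\binom{m+i}{m}$, establishing the identity.

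As an alternative I could run a short induction on $n$: the base case $n=0$ reads $\binom{m}{m}=1=\binom{m+1}{m+1}$, and the inductive step adds $\binom{m+n+1}{m}$ to both sides and invokes Pascal's rule $\binom{n+m+1}{m+1}+\binom{n+m+1}{m}=\binom{n+m+2}{m+1}$. Either route is completely routine; the only thing that needs a moment of care is getting the index range of the bucketing parameter right, so that the sum on the left matches the stated formula exactly. I would go with the bucketing proof to keep the exposition uniform with the proof of Lemma \ref{doubleBumpLemma}.

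\begin{proof}
The right hand side counts the number of order $m+1$ subsets of $\{0,1,\ldots,n+m\}.$ Put those subsets in buckets depending on the value of the largest element. That value is $m+i$ for $0\leq i\leq n.$ Once it is fixed, the remaining $m$ elements form an arbitrary order $m$ subset of $\{0,1,\ldots,m+i-1\},$ and there are $\binom{m+i}{m}$ of those. Summing up the sizes of the buckets gives the equality.
\end{proof}
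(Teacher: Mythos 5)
Your proof is correct and matches the paper's own argument exactly: both count the order $m+1$ subsets of $\{0,1,\ldots,n+m\}$ and bucket them by their largest element. Your version just spells out the bucket sizes more explicitly, which is fine.
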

\begin{proof}
Consider selection of $m+1$ elements of $\{0,1,\ldots, n+m\}$ and bucket depending on the largest element and sum up each bucket. 
\end{proof}

This will be useful in the algebraic setting:

\begin{cor}\label{corConstF}
 If $f_{{\bf H  } }(1) = 1$ and   $f_{{\bf H  } }(\omega) = c_{\bf H  }$ for $\omega > 1$ is a $\chi$--bounding function for ${\bf H  }$--free graphs, then
\[ f_{pK_2 \cup {\bf H  } }(\omega) =  {\omega + 2p-2  \choose 2p-1}  +   c_{\bf H  }  {\omega + 2p-2  \choose 2p} \]
is a $\chi$--bounding function for ${ pK_2 \cup \bf H  }$--free graphs.
\end{cor}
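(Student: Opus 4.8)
The plan is to apply Theorem \ref{pK2extend} directly with the given $\chi$-bounding function $f_{\bf H}$, and then simplify the resulting double sum using Lemma \ref{singleBumpLemma}. By Theorem \ref{pK2extend}, the class of $pK_2 \cup {\bf H}$--free graphs is $\chi$-bounded by
\[ f_{pK_2 \cup {\bf H}}(\omega) = \sum_{k=1}^{\omega} {\omega-k+2p-1 \choose 2p-1} f_{\bf H}(k). \]
The idea is to split off the $k=1$ term, where $f_{\bf H}(1)=1$, from the remaining terms $k=2,\ldots,\omega$, where $f_{\bf H}(k)=c_{\bf H}$.

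First I would write
\[ f_{pK_2 \cup {\bf H}}(\omega) = {\omega-1+2p-1 \choose 2p-1} \cdot 1 + c_{\bf H} \sum_{k=2}^{\omega} {\omega-k+2p-1 \choose 2p-1}. \]
For the first term, ${\omega-1+2p-1 \choose 2p-1} = {\omega+2p-2 \choose 2p-1}$, which matches the first summand of the claimed formula. For the second term, I would reindex the sum by $i = \omega-k$, so that as $k$ runs from $2$ to $\omega$, $i$ runs from $0$ to $\omega-2$, giving
\[ \sum_{k=2}^{\omega} {\omega-k+2p-1 \choose 2p-1} = \sum_{i=0}^{\omega-2} {i+2p-1 \choose 2p-1}. \]
Now Lemma \ref{singleBumpLemma} with $m = 2p-1$ and $n = \omega-2$ gives $\sum_{i=0}^{\omega-2} {(2p-1)+i \choose 2p-1} = {(\omega-2)+(2p-1)+1 \choose (2p-1)+1} = {\omega+2p-2 \choose 2p}$, so the second term equals $c_{\bf H} {\omega+2p-2 \choose 2p}$, completing the computation.

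There is no real obstacle here; the only points requiring a small amount of care are the hypothesis that $f_{\bf H}(k) = c_{\bf H}$ holds precisely for $k>1$ (so the split at $k=1$ is legitimate, and one should note the degenerate case $\omega=1$ where the second sum is empty and both binomial coefficients ${2p-1 \choose 2p-1}=1$ and ${2p-1 \choose 2p}=0$ still give the right answer), and matching the index conventions of Lemma \ref{singleBumpLemma} correctly. I would also remark that $f_{\bf H}$ need not literally be constant for $\omega>1$; it suffices that $f_{\bf H}(\omega) \le c_{\bf H}$ there, since $\chi$-bounding functions may be replaced by larger ones, but stating it with equality as in the hypothesis keeps the bookkeeping clean.
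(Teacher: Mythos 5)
Your proof is correct and follows exactly the paper's own argument: apply Theorem \ref{pK2extend}, split off the $k=1$ term, reindex with $i=\omega-k$, and evaluate the remaining sum with Lemma \ref{singleBumpLemma}. The extra remarks on the degenerate case $\omega=1$ and on replacing equality by an upper bound are fine but not needed.
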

\begin{proof}
According to Theorem \ref{pK2extend} and Lemma \ref{singleBumpLemma},
\[ \begin{array}{rcl}
f_{pK_2 \cup {\bf H  } }(\omega) & = & \displaystyle  {\omega-1 + 2p-1  \choose 2p-1}  +   c_{\bf H  }   \sum_{k=2}^{\omega}   {\omega-k+ 2p-1  \choose 2p-1}  \\
& = & \displaystyle  {\omega+ 2p-2  \choose 2p-1}  +   c_{\bf H  }   \sum_{i=0}^{\omega-2}   {i + 2p-1  \choose 2p-1}  \\
& = & \displaystyle  {\omega + 2p-2  \choose 2p-1}  +   c_{\bf H  }  {\omega + 2p-2  \choose 2p}.     \\
\end{array}
\]
\end{proof}

\begin{cor}\label{corBoundingSimple}
If ${\bf H}$ is the set of graphs on $n$ vertices of maximal degree $n-1,$ then 
\[ f_{pK_2 \cup {\bf H  } }(\omega) =  {\omega + 2p-2  \choose 2p-1}  +   (n-1)  {\omega + 2p-2  \choose 2p} \]
is a $\chi$--bounding function for ${ pK_2 \cup \bf H  }$--free graphs.
\end{cor}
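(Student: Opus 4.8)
The plan is to make the class of $\mathbf{H}$-free graphs fully explicit and then apply Corollary \ref{corConstF}. A graph on $n$ vertices has maximum degree $n-1$ exactly when it has a vertex adjacent to all of the remaining $n-1$ vertices, i.e.\ a universal vertex. The first step is therefore to prove that a graph $G$ is $\mathbf{H}$-free if and only if $\Delta(G) \leq n-2$. For one direction, if some $v \in V(G)$ has degree at least $n-1$, then $G$ restricted to $v$ together with $n-1$ of its neighbors is a graph on $n$ vertices in which $v$ is universal, hence a member of $\mathbf{H}$ that is induced in $G$, so $G$ is not $\mathbf{H}$-free. For the other direction, an induced copy in $G$ of any $H \in \mathbf{H}$ contains a vertex that is adjacent, inside the copy, to the other $n-1$ vertices; those edges are edges of $G$, so $G$ has a vertex of degree at least $n-1$.

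Given this characterization, the second step is a one-line chromatic estimate: greedy coloring shows $\chi(G) \leq \Delta(G) + 1$, so every $\mathbf{H}$-free graph satisfies $\chi(G) \leq n-1$, and if in addition $\omega(G) = 1$ then $G$ is edgeless and $\chi(G) \leq 1$. Since the class of $\mathbf{H}$-free graphs is hereditary — an induced subgraph of an $\mathbf{H}$-free graph is again $\mathbf{H}$-free — this exhibits a $\chi$-bounding function of exactly the form demanded by Corollary \ref{corConstF}, namely $f_{\mathbf{H}}(1) = 1$ and $f_{\mathbf{H}}(\omega) = n-1$ for $\omega > 1$, so that $c_{\mathbf{H}} = n-1$.

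The final step is to invoke Corollary \ref{corConstF} with $c_{\mathbf{H}} = n-1$; substituting this value into its conclusion produces $f_{pK_2 \cup \mathbf{H}}(\omega) = \binom{\omega + 2p-2}{2p-1} + (n-1)\binom{\omega + 2p-2}{2p}$, which is precisely the claimed $\chi$-bounding function for $pK_2 \cup \mathbf{H}$-free graphs.

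I do not expect a genuine obstacle here: the entire content is the equivalence ``$\mathbf{H}$-free $\Leftrightarrow \Delta(G) \leq n-2$'', and the only points needing care are that this equivalence concerns induced subgraphs rather than spanning ones, and that one should record the $\omega(G) = 1$ case separately so that the normalization $f_{\mathbf{H}}(1) = 1$ required by Corollary \ref{corConstF} holds on the nose rather than merely up to the inequality $\chi \leq n-1$.
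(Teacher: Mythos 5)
Your proposal is correct and follows exactly the paper's route: identify the $\mathbf{H}$-free graphs as those of maximal degree at most $n-2$, color them greedily with $n-1$ colors (with the $\omega=1$ normalization), and plug $c_{\mathbf H}=n-1$ into Corollary \ref{corConstF}. You merely spell out the details that the paper's two-line proof leaves implicit.
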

\begin{proof}
The ${\bf H}$-free graphs are of maximal degree at most $n-2,$ and $(n-1)$--colorable. Use Corollary \ref{corConstF}.
\end{proof}

\begin{cor}\label{corInducedMatching} The function
\[
f_{pK_2}(\omega) =  {\omega-1 + 2(p-1)  \choose 2(p-1)}
\]
is $\chi$--bounding the $pK_2$--free graphs.
\end{cor}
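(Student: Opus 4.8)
The plan is to read off Corollary~\ref{corInducedMatching} as the trivial base case of the machinery already built, by writing $pK_2 = (p-1)K_2 \cup K_2$. Concretely, set $\mathbf{H} = \{K_2\}$. A graph is $\{K_2\}$-free precisely when it has no edge, hence is $1$-colorable, so the class of $\{K_2\}$-free graphs is $\chi$-bounded by the constant function $f_{\{K_2\}} \equiv 1$ (in particular $f_{\{K_2\}}(1) = 1$). Since $pK_2 \cup \{K_2\}$ as a forbidden structure is just $(p+1)K_2$, the $pK_2$-free graphs are exactly the $(p-1)K_2 \cup \{K_2\}$-free graphs, so everything reduces to specializing the earlier results with this $\mathbf{H}$ and with parameter $p-1$ in place of $p$.

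The cleanest route I would take is through Corollary~\ref{corConstF}: with $c_{\mathbf{H}} = 1$ and $p$ replaced by $p-1$ it gives immediately that
\[
f_{pK_2}(\omega) = {\omega + 2(p-1) - 2 \choose 2(p-1)-1} + {\omega + 2(p-1) - 2 \choose 2(p-1)}
\]
is a $\chi$-bounding function for the $pK_2$-free graphs, and Pascal's rule collapses the two terms to ${\omega - 1 + 2(p-1) \choose 2(p-1)}$, as claimed. An equivalent route is to apply Theorem~\ref{pK2extend} directly (again with $p-1$ for $p$), producing $\sum_{k=1}^{\omega}{\omega-k+2p-3 \choose 2p-3}$, and then to substitute $i = \omega - k$ and invoke Lemma~\ref{singleBumpLemma} with $m = 2p-3$, $n = \omega-1$ to obtain the same binomial coefficient. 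I would probably present one of these and remark that the other gives the same answer.

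There is essentially no obstacle here — the content is purely bookkeeping of binomial indices, and the only thing requiring care is the boundary case $p = 1$, which falls outside the range in which I am applying Theorem~\ref{pK2extend}/Corollary~\ref{corConstF} (those get used with parameter $p-1 \geq 1$). For $p = 1$ one checks directly: $1K_2$-free graphs are edgeless, hence $1$-colorable, and ${\omega - 1 + 0 \choose 0} = 1$, so the formula holds. So the proof is: dispose of $p = 1$ by inspection, then for $p \geq 2$ quote Corollary~\ref{corConstF} (or Theorem~\ref{pK2extend} plus Lemma~\ref{singleBumpLemma}) with $\mathbf{H} = \{K_2\}$ and simplify.
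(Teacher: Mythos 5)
Your proposal is correct and follows essentially the same route as the paper, which also invokes Corollary~\ref{corConstF} with ${\bf H}=\{K_2\}$, $c_{\bf H}=1$, and parameter $p-1$, then collapses the two binomial coefficients. Your explicit treatment of the boundary case $p=1$ and the alternative derivation via Theorem~\ref{pK2extend} and Lemma~\ref{singleBumpLemma} are harmless additions but not a different argument.
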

\begin{proof} Use ${\bf H}=\{K_2\}$ and $ c_{\bf H  }=1$ in Corollary \ref{corConstF} with $(p-1)K_2$ to get
\[ f_{pK_2}(\omega) = {\omega + 2(p-1)-2  \choose 2(p-1)-1}  +   1  {\omega + 2(p-1)-2  \choose 2(p-1)} = {\omega-1 + 2(p-1)  \choose 2(p-1)}
\]
\end{proof}

Let's compare Corollary \ref{corInducedMatching} to Wagon's old result in Proposition \ref{wagon}. It is easy to prove that:
\begin{prop}
\[ \lim_{\omega \rightarrow \infty} \,\,   \frac{f_p^{\mathrm{W}}(\omega)}{\omega^{2(p-1)}} = \frac{1}{2^{p-1}} 
\quad
\textrm{and}
\quad
\lim_{\omega \rightarrow \infty} \,\,   \frac{f_{pK_2}(\omega)}{\omega^{2(p-1)}} = \frac{1}{(2p-2)!} 
\]
\end{prop}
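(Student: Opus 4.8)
The plan is to treat the two limits separately, handling the closed-form expression for $f_{pK_2}$ directly and the Wagon recursion by induction on $p$.

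For the second limit, I would simply observe that $f_{pK_2}(\omega) = \binom{\omega - 1 + 2(p-1)}{2(p-1)}$ is, as a function of $\omega$, a polynomial of degree $2(p-1)$: indeed
\[ \binom{\omega - 1 + 2(p-1)}{2(p-1)} = \frac{(\omega - 1 + 2(p-1))(\omega - 2 + 2(p-1)) \cdots \omega}{(2(p-1))!}, \]
a product of $2(p-1)$ linear factors in $\omega$ divided by $(2(p-1))!$. Each factor has leading term $\omega$, so the whole expression has leading term $\omega^{2(p-1)}/(2(p-1))!$, and dividing by $\omega^{2(p-1)}$ and letting $\omega \to \infty$ kills every lower-order contribution. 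This yields the claimed value $1/(2p-2)!$.

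For the first limit, I would prove by induction on $p \geq 1$ the stronger statement that $f_p^{\mathrm{W}}$ is a polynomial in $\omega$ of degree exactly $2(p-1)$ with leading coefficient $1/2^{p-1}$. The base case $p=1$ is immediate since $f_1^{\mathrm{W}}(\omega) = 1$ has degree $0$ and leading coefficient $1 = 1/2^0$. For the inductive step, assume the claim for $p$. The recursion gives $f_{p+1}^{\mathrm{W}}(\omega) = \binom{\omega}{2} f_p^{\mathrm{W}}(\omega) + \omega$, and since $\binom{\omega}{2} = \tfrac{1}{2}\omega^2 - \tfrac{1}{2}\omega$ has degree $2$ and leading coefficient $1/2$, the product $\binom{\omega}{2} f_p^{\mathrm{W}}(\omega)$ has degree $2 + 2(p-1) = 2p$ and leading coefficient $\tfrac12 \cdot \tfrac{1}{2^{p-1}} = \tfrac{1}{2^p}$. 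Adding the degree-one term $\omega$ does not affect the top-degree behavior because $2p \geq 2$, so $f_{p+1}^{\mathrm{W}}$ has degree $2p = 2((p+1)-1)$ and leading coefficient $1/2^p = 1/2^{(p+1)-1}$, closing the induction. The limit of the ratio with $\omega^{2(p-1)}$ is then exactly the leading coefficient $1/2^{p-1}$.

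The computation has essentially no obstacle; the only point requiring care is to propagate the \emph{leading coefficient} through the recursion, not merely the degree, and to confirm that the additive lower-order terms (the $+\omega$ in the Wagon recursion, and the non-leading terms of the binomial coefficient in the $pK_2$ case) are genuinely of strictly smaller degree than the dominating term for every $p \geq 1$.
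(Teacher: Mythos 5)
Your proof is correct; the paper itself omits the argument entirely (it merely states the proposition is "easy to prove"), and your computation of leading coefficients — directly for the binomial expression and by induction through Wagon's recursion — is exactly the routine verification the author intends. No gaps: the degree bookkeeping and the check that the additive $+\omega$ term is of lower order are both handled properly.
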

So, for large $\omega,$ the new bound improves on Wagon's by a factor of $\frac{(2p-2)!}{2^{p-1}}.$
\begin{prop}
If $\omega>2$ and $p>2$ then
\[ f_p^{\mathrm{W}}(\omega) > f_{pK_2}(\omega) \]
and otherwise they are equal.
\end{prop}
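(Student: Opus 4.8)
The plan is to exploit that one side of the comparison is explicit: by Corollary~\ref{corInducedMatching} we have $f_{pK_2}(\omega)=\binom{\omega-1+2(p-1)}{2(p-1)}$, whereas $f^{\mathrm W}$ is only given recursively. I would prove by induction on $p$ that $f_p^{\mathrm W}(\omega)=f_{pK_2}(\omega)$ whenever $\omega\le 2$, that $f_p^{\mathrm W}=f_{pK_2}$ identically for $p\le 2$, and that $f_p^{\mathrm W}(\omega)>f_{pK_2}(\omega)$ once $\omega\ge 3$ and $p\ge 3$. The whole argument is driven by a single pointwise inequality, which I will call $(\star)$: for integers $\omega\ge 1$ and $m\ge 0$,
\[
\binom{\omega}{2}\binom{\omega+m-1}{m}+\omega\ \ge\ \binom{\omega+m+1}{m+2},
\]
with equality if and only if $\omega\le 2$ or $m=0$. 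Taking $m=2(p-1)$, the left-hand side is exactly $\binom{\omega}{2}f_{pK_2}(\omega)+\omega$ and the right-hand side is $f_{(p+1)K_2}(\omega)$, so $(\star)$ is tailor-made to be fed into Wagon's recursion $f_{p+1}^{\mathrm W}(\omega)=\binom{\omega}{2}f_p^{\mathrm W}(\omega)+\omega$ from Proposition~\ref{wagon}.

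To prove $(\star)$ I would argue combinatorially, in the spirit of Lemmas~\ref{doubleBumpLemma} and~\ref{singleBumpLemma}. By stars and bars, $\binom{\omega+m+1}{m+2}=\binom{\omega+m+1}{\omega-1}$ counts the distributions of $m+2$ identical balls into boxes $1,\dots,\omega$. Exactly $\omega$ of these are supported on a single box. For a distribution $D$ supported on at least two boxes, let $i<j$ be its two least-indexed occupied boxes and send $D$ to the pair consisting of the $2$-element set $\{i,j\}$ and the distribution obtained by removing one ball from box $i$ and one from box $j$; putting the two balls back recovers $D$, so this is an injection into the set of pairs (a $2$-element set of boxes, a distribution of $m$ balls), of which there are $\binom{\omega}{2}\binom{\omega+m-1}{m}$. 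This gives $\binom{\omega+m+1}{m+2}\le\omega+\binom{\omega}{2}\binom{\omega+m-1}{m}$. For equality the injection must be onto, i.e.\ for every pair $\{i,j\}$ with $i<j$ and every $m$-ball distribution $E$, adding a ball to boxes $i$ and $j$ must leave $i,j$ as the two least-indexed occupied boxes; since for $m\ge 1$ one may place all $m$ balls in any single box, this fails as soon as the ``gap'' $\{1,\dots,i-1\}\cup\{i+1,\dots,j-1\}$ is nonempty for some such pair, which happens precisely when $\omega\ge 3$. It holds vacuously when $m=0$ (only the empty distribution) or $\omega\le 1$ (no $2$-element set of boxes), and when $\omega=2$ the only gap is empty; this pins down the equality case of $(\star)$.

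With $(\star)$ in hand the induction is routine. For $p=1$ both functions are identically $1$, and for $p=2$ the recursion gives $f_2^{\mathrm W}(\omega)=\binom{\omega}{2}+\omega=\binom{\omega+1}{2}=f_{2K_2}(\omega)$. For the step from $p$ to $p+1$ with $p\ge 2$: if $\omega\le 2$, the induction hypothesis and the equality case of $(\star)$ with $m=2(p-1)$ give $f_{p+1}^{\mathrm W}(\omega)=\binom{\omega}{2}f_{pK_2}(\omega)+\omega=f_{(p+1)K_2}(\omega)$; if $\omega\ge 3$ and $p=2$, then $f_2^{\mathrm W}(\omega)=f_{2K_2}(\omega)$ and the strict case of $(\star)$ with $m=2$ yields $f_3^{\mathrm W}(\omega)>f_{3K_2}(\omega)$; and if $\omega\ge 3$ and $p\ge 3$, then $f_p^{\mathrm W}(\omega)>f_{pK_2}(\omega)$ by the hypothesis and $\binom{\omega}{2}>0$, so $f_{p+1}^{\mathrm W}(\omega)>\binom{\omega}{2}f_{pK_2}(\omega)+\omega\ge f_{(p+1)K_2}(\omega)$ by $(\star)$. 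This covers all cases and finishes the proof.

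I expect the only real obstacle to be $(\star)$ — not the inequality itself, which is a ``double bump'' cousin of Lemma~\ref{doubleBumpLemma}, but its sharp equality characterization, since that is exactly what the ``otherwise they are equal'' clause rests on. The injective bucketing argument above looks like the cleanest way to extract both the bound and the precise equality condition simultaneously, avoiding any explicit manipulation of the binomial coefficients.
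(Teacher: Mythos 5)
Your proposal is correct, and it takes a genuinely different route from the paper. The paper's proof is a meta-argument: it observes that Wagon's recursion arises from the same clique-partition scheme as Lemma~\ref{K2extend} but with the weaker estimate $\chi(G[C_{i,j}])\le f_{p-1}^{\mathrm W}(\omega(G))$ in place of $f_{p-1}^{\mathrm W}(\omega(G)-j+2)$, and asserts that this makes the resulting bound strictly worse when $\omega>2$ and $p>2$ and equal otherwise, leaving the propagation of that pointwise comparison through the recursion implicit. You instead work entirely at the level of the numerical recursions: you isolate the single inequality $\binom{\omega}{2}\binom{\omega+m-1}{m}+\omega\ge\binom{\omega+m+1}{m+2}$ with its exact equality condition ($\omega\le 2$ or $m=0$), prove it by an explicit injection on ball distributions in the same bucketing style as Lemmas~\ref{doubleBumpLemma} and~\ref{singleBumpLemma}, and then run a clean induction on $p$ through Wagon's recursion $f_{p+1}^{\mathrm W}(\omega)=\binom{\omega}{2}f_p^{\mathrm W}(\omega)+\omega$. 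I checked the injection (removal of one ball from each of the two least-indexed occupied boxes is reversible), the surjectivity analysis that pins down the equality case, and all branches of the induction, including the base cases $p=1,2$ and the two strict branches for $\omega\ge 3$; everything is sound. What your approach buys is a fully self-contained and verifiable proof that also identifies the precise equality set, at the cost of being longer; what the paper's approach buys is brevity and a conceptual explanation of \emph{why} the new bound is better (a sharper clique-number estimate on each $C_{i,j}$), at the cost of leaving the quantitative comparison to the reader.
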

\begin{proof} The recursion to prove Wagon's $f_p^{\mathrm{W}}(\omega)$ is a simpler version of the proof of Lemma \ref{K2extend}. Instead of the inequality
\[ \chi(G[C_{i,j}]) \leq f_{p-1}^{\mathrm{W}} ( \omega(G)-j+2 ) \]
the employed 
\[ \chi(G[C_{i,j}]) \leq f_{p-1}^{\mathrm{W}} ( \omega(G) ) \]
is worse when $\omega>2$ and $p>2$. That they are equal otherwise is straightforward.
\end{proof}

Here is another corollary of Theorem~\ref{pK2extend}.
\begin{cor} If $\bf H$ is a set of graphs such that all $\bf H$-free graphs are perfect, then
\[ f_{pK_2 \cup {\bf H  } }(\omega) = { \omega + 2p \choose 2p+1 } \]
is a $\chi$--bounding function for ${ pK_2 \cup \bf H  }$--free graphs.
\end{cor}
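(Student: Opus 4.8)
The plan is to reduce this corollary to Theorem~\ref{pK2extend} together with the purely combinatorial identity in Lemma~\ref{doubleBumpLemma}. The key initial observation is that when every $\mathbf{H}$-free graph is perfect, the constant-free function $f_{\mathbf{H}}(\omega) = \omega$ is a legitimate $\chi$-bounding function for the (hereditary) class of $\mathbf{H}$-free graphs: for such a graph $G$ perfectness forces $\chi(G) = \omega(G)$ directly, so there is nothing further to check.

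Next I would substitute this $f_{\mathbf{H}}$ into Theorem~\ref{pK2extend}, which gives the $\chi$-bounding function
\[
f_{pK_2 \cup \mathbf{H}}(\omega) = \sum_{k=1}^{\omega} \binom{\omega - k + 2p - 1}{2p-1}\, k
\]
for the $pK_2 \cup \mathbf{H}$-free graphs. Re-indexing by $i = \omega - k$ turns this into $\sum_{i=0}^{\omega-1}(\omega - i)\binom{i + 2p-1}{2p-1}$, which is exactly the shape of the sum in Lemma~\ref{doubleBumpLemma} with $n = \omega - 1$ and $m = 2p-1$. Evaluating it yields $\binom{(\omega-1) + (2p-1) + 2}{(2p-1)+2} = \binom{\omega + 2p}{2p+1}$, which is the claimed formula.

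There is essentially no genuine obstacle here; the only thing to be careful about is the index bookkeeping when matching the sum to Lemma~\ref{doubleBumpLemma} — in particular that the summation range $1 \le k \le \omega$ corresponds to $0 \le i \le n$ with $n = \omega - 1$, and that the endpoint $k = 1$ contributes $f_{\mathbf{H}}(1) = 1$ consistently with the binomial identity. Routing through Theorem~\ref{pK2extend} and Lemma~\ref{doubleBumpLemma} is the shortest path; one could alternatively iterate Lemma~\ref{K2extend} by hand, but that merely re-derives the same identity.
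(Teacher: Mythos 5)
Your proof is correct and follows exactly the paper's own route: take $f_{\mathbf H}(k)=k$ from perfectness, substitute into Theorem~\ref{pK2extend}, and evaluate the resulting sum via Lemma~\ref{doubleBumpLemma} with $n=\omega-1$ and $m=2p-1$ to obtain $\binom{\omega+2p}{2p+1}$. Nothing to add.
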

\begin{proof}
The $\bf H$-free graphs are $\chi$-bounded by $ f_{{\bf H  } }(k) = k$ as they are perfect. By Theorem~\ref{pK2extend} and Lemma~\ref{doubleBumpLemma},
\[
\begin{array}{rcl}
\displaystyle
f_{pK_2 \cup {\bf H  } }(\omega) & =& \displaystyle  \sum_{k=1}^{\omega}   {\omega-k + 2p-1  \choose 2p-1}    f_{{\bf H  } }(k)  \\ 
& = &  \displaystyle  \sum_{k=1}^{\omega}   {\omega-k + 2p-1  \choose 2p-1}  k \\
& = & \displaystyle  \sum_{i=0}^{\omega-1}  ((\omega-1)+1-i)   {i + 2p-1  \choose 2p-1}  \\
  & = &  \displaystyle { \omega - 1 + 2p -1 + 2\choose 2p-1 +2 } \\
  & = &  \displaystyle  { \omega + 2p \choose 2p+1 }
\end{array}
\]
is a $\chi$--bounding function for ${ pK_2 \cup \bf H  }$--free graphs.
\end{proof}

The following two results were proved by  Bharathi and Choudum \cite{BC18} in the case $p=1.$
\begin{cor}
The ${ pK_2 \cup P_4}$--free graphs are $\chi$-bounded by $\omega + 2p \choose 2p+1$.
\end{cor}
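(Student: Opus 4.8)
The plan is to obtain this as the special case $\mathbf{H}=\{P_4\}$ of the immediately preceding corollary, whose hypothesis is precisely that all $\mathbf{H}$-free graphs are perfect. So the only thing I would need to establish is that $P_4$-free graphs are perfect. This is a classical fact: the $P_4$-free graphs are exactly the cographs, and cographs are perfect. I would justify it in one line by induction on the cograph construction: a cograph is built from isolated vertices by repeated disjoint union and join, and perfection is preserved by both operations. For disjoint union this is immediate; for a join $G+H$ one has $\omega(G+H)=\omega(G)+\omega(H)$ and $\chi(G+H)=\chi(G)+\chi(H)$, and every induced subgraph of a cograph is again a cograph with the same recursive structure, so the induction closes. (Alternatively, one can note that $C_{2k+1}$ and $\overline{C_{2k+1}}$ for $k\geq 2$ all contain an induced $P_4$, hence are not $P_4$-free, and then invoke the strong perfect graph theorem.)

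Granting this, I would apply the preceding corollary with $\mathbf{H}=\{P_4\}$. Since $pK_2\cup\mathbf{H}=\{pK_2\cup P_4\}$ is a single graph, the corollary states exactly that the $(pK_2\cup P_4)$-free graphs are $\chi$-bounded by $\binom{\omega+2p}{2p+1}$, which is the claimed bound.

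I do not anticipate a genuine obstacle here: all the combinatorial substance sits in Theorem \ref{pK2extend} and Lemma \ref{doubleBumpLemma}, which feed the preceding corollary, and the only additional input — perfection of $P_4$-free graphs — is textbook material. The case $p=1$ recovers the Bharathi--Choudum bound of $\binom{\omega+2}{3}$ for $(K_2\cup P_4)$-free graphs, which is consistent with the remark that these results were proved in \cite{BC18} for $p=1$.
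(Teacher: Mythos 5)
Your proposal matches the paper's proof exactly: the paper also deduces the corollary by plugging $\mathbf{H}=\{P_4\}$ into the preceding corollary, justifying the hypothesis by the classical fact (Seinsche \cite{S74}) that $P_4$-free graphs are perfect. Your cograph induction is a correct self-contained substitute for that citation, so there is nothing to add.
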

\begin{proof}
$P_4$--free graphs are perfect \cite{S74}.
\end{proof}
\begin{cor}
The ${ pK_2 \cup P_3}$--free graphs are $\chi$-bounded by $\omega + 2p \choose 2p+1$.
\end{cor}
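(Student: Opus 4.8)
The plan is to reduce to the corollary proved just above, the one stating that if $\mathbf H$ is a set of graphs all of whose $\mathbf H$-free members are perfect, then the $pK_2 \cup \mathbf H$-free graphs are $\chi$-bounded by $\binom{\omega+2p}{2p+1}$. Here I would take $\mathbf H = \{P_3\}$. So the whole task comes down to verifying that $P_3$-free graphs are perfect.

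To that end I would recall the standard structural fact that a graph has no induced $P_3$ if and only if each of its connected components is a complete graph: if $u$ and $w$ are non-adjacent vertices in the same component, take a shortest $u$--$w$ path and note that any two consecutive edges together with their endpoints would induce a $P_3$, so there is no such path of length $\geq 2$, a contradiction. A disjoint union of complete graphs is perfect, since every induced subgraph is again such a union and there its chromatic number equals its clique number. Hence the $P_3$-free graphs are $\chi$-bounded by the identity $f_{\mathbf H}(k)=k$, and the preceding corollary applies verbatim with $p$ as given, yielding $f_{pK_2 \cup P_3}(\omega) = \binom{\omega+2p}{2p+1}$.

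An even shorter alternative, which I would at least mention, is to avoid invoking perfectness altogether: $P_3$ is an induced subgraph of $P_4$, hence $pK_2 \cup P_3$ is an induced subgraph of $pK_2 \cup P_4$ (use the whole $pK_2$ part and three consecutive vertices of the $P_4$, noting there are no edges across the disjoint union). Therefore every $pK_2 \cup P_3$-free graph is also $pK_2 \cup P_4$-free, and the bound follows from the $pK_2 \cup P_4$ corollary. Either route works; I would probably present the perfectness argument to keep the exposition parallel to the $P_4$ case.

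There is essentially no obstacle here. The only point requiring a line of justification is the characterisation of $P_3$-free graphs (or the induced-containment $P_3 \leq_{\mathrm{ind}} P_4$), both of which are routine; all the genuine combinatorial bookkeeping — the binomial identities — has already been carried out in Lemma~\ref{doubleBumpLemma} and Theorem~\ref{pK2extend}, and is inherited through the corollary being applied.
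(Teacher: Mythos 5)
Your main argument is exactly the paper's: the paper's proof reads ``$P_3$--free graphs are disjoint unions of cliques, and thus perfect,'' and then applies the preceding corollary with ${\bf H}=\{P_3\}$, which is precisely your first route (your alternative via $P_4$-freeness is also valid but not what the paper does). The proposal is correct and takes essentially the same approach.
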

\begin{proof}
$P_3$--free graph are disjoint unions of cliques, and thus perfect.
\end{proof}

For graphs that are almost perfect, similar results are possible:

\begin{cor}
The $\{ (p+2)K_2, pK_2 \cup diamond\}$--free graphs are $\chi$-bounded by $ {\omega + 2p \choose 2p+1} +   {\omega + 2p-3  \choose 2p-1}  $.
\end{cor}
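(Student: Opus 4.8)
The plan is to present the class as one of the $pK_2\cup\mathbf{H}$ families handled by Theorem~\ref{pK2extend}. Because $pK_2\cup 2K_2=(p+2)K_2$, taking $\mathbf{H}=\{2K_2,diamond\}$ gives
\[ pK_2\cup\mathbf{H}=\{(p+2)K_2,\ pK_2\cup diamond\}, \]
so the graphs in the statement are precisely the $pK_2\cup\mathbf{H}$--free graphs. It thus suffices to fix a $\chi$--bounding function $f_{\mathbf{H}}$ for $\{2K_2,diamond\}$--free graphs and then evaluate the sum in Theorem~\ref{pK2extend}.

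The function I would use is $f_{\mathbf{H}}(k)=k$ for $k\neq 2$ and $f_{\mathbf{H}}(2)=3$. For $\omega\le 1$ the graph is edgeless, and for $\omega=2$ the graph is triangle--free and $2K_2$--free, hence $3$--colourable already by Corollary~\ref{corInducedMatching}; diamond--freeness is not even needed here. What remains is to show that a $\{2K_2,diamond\}$--free graph $G$ with $\omega(G)\ge 3$ satisfies $\chi(G)\le\omega(G)$.

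For that step I would exploit the two standard structural consequences of the forbidden subgraphs: $2K_2$--freeness makes $V(G)\setminus\bigl(N[u]\cup N[v]\bigr)$ an independent set for every edge $uv$, while diamond--freeness makes every neighbourhood $N(w)$ induce a disjoint union of cliques and makes $N(u)\cap N(v)$ a clique whenever $uv\in E(G)$. Combining these — peeling off an independent ``far'' part and colouring the closed neighbourhood of a carefully chosen edge with the help of the clique structure of the neighbourhoods — should produce an $\omega$--colouring. An equivalent route goes through the Strong Perfect Graph Theorem: $2K_2$--freeness rules out all odd holes of length $\ge 7$, and diamond--freeness rules out all odd antiholes of length $\ge 7$ (each $\overline{C_{2k+1}}$ with $k\ge 3$ contains a diamond), so the only obstruction to perfection left is an induced $C_5$, and one then argues directly that a $\{2K_2,diamond\}$--free graph carrying both a triangle and an induced $C_5$ is still $\omega$--colourable. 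This last point is the main obstacle: such graphs really are not all perfect, so perfection cannot simply be quoted, and one has to control the limited ways a triangle can attach to an induced $C_5$ under the two excluded subgraphs.

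Granting $f_{\mathbf{H}}$, Theorem~\ref{pK2extend} gives
\[ f_{pK_2\cup\mathbf{H}}(\omega)=\sum_{k=1}^{\omega}\binom{\omega-k+2p-1}{2p-1}f_{\mathbf{H}}(k)=\sum_{k=1}^{\omega}\binom{\omega-k+2p-1}{2p-1}k+\binom{\omega-2+2p-1}{2p-1}, \]
where the last summand is the single correction $f_{\mathbf{H}}(2)-2=1$ occurring at $k=2$. The remaining sum equals $\binom{\omega+2p}{2p+1}$ — the same evaluation obtained from Lemma~\ref{doubleBumpLemma} with $m=2p-1$ and $n=\omega-1$ after reindexing, exactly as in the corollary treating the perfect case. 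Since $\binom{\omega-2+2p-1}{2p-1}=\binom{\omega+2p-3}{2p-1}$, this gives $f_{pK_2\cup\mathbf{H}}(\omega)=\binom{\omega+2p}{2p+1}+\binom{\omega+2p-3}{2p-1}$, as claimed.
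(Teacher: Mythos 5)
Your reduction is exactly the paper's: take $\mathbf{H}=\{2K_2,diamond\}$, note $pK_2\cup 2K_2=(p+2)K_2$, feed the bounding function $f_{\mathbf{H}}(k)=k$ (with the single exception $f_{\mathbf{H}}(2)=3$) into Theorem~\ref{pK2extend}, split off the $k=2$ correction term $\binom{\omega+2p-3}{2p-1}$, and evaluate the remaining sum to $\binom{\omega+2p}{2p+1}$ via Lemma~\ref{doubleBumpLemma}. All of that is correct and matches the paper step for step.

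The gap is in the input $f_{\mathbf{H}}$ itself. The claim that every $\{2K_2,diamond\}$--free graph with $\omega\ge 3$ is $\omega$--colourable is a genuine theorem (it is Theorem~4 of Bharathi and Choudum \cite{BC18}, which is what the paper quotes), and your proposal does not establish it: you offer two possible routes and then explicitly concede that the ``main obstacle'' --- controlling how a triangle can coexist with an induced $C_5$ in a $\{2K_2,diamond\}$--free graph --- is left unresolved. The SPGT route you sketch is sound as far as it goes ($2K_2$--freeness kills odd holes of length at least $7$, diamond--freeness kills odd antiholes of length at least $7$), but the residual $C_5$ case is precisely where the work lies, since such graphs need not be perfect. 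Either carry out that case analysis or cite \cite{BC18} for the value of $f_{\mathbf{H}}$; with that citation in place, the rest of your argument is complete and identical in substance to the paper's proof. (Your handling of $\omega\le 2$, via Corollary~\ref{corInducedMatching} for the $2K_2$--free triangle--free case, is fine.)
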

\begin{proof}
Let ${\bf H} = \{2K_2, diamond\}.$
According to Theorem 4 of \cite{BC18}, the $\bf H$-free graphs are $\chi$-bounded by $ f_{{\bf H  } }(k) = k$ for all $k,$ with the exception $ f_{{\bf H  } }(2) = 3.$
By Theorem~\ref{pK2extend} and Lemma~\ref{doubleBumpLemma}, 
\[
\begin{array}{rcl}
\displaystyle
f_{pK_2 \cup {\bf H  } }(\omega) & =& \displaystyle  \sum_{k=1}^{\omega}   {\omega-k + 2p-1  \choose 2p-1}    f_{{\bf H  } }(k)  \\ 
& = &  \displaystyle   {\omega-2 + 2p-1  \choose 2p-1}  + \sum_{k=1}^{\omega}   {\omega-k + 2p-1  \choose 2p-1}  k \\
& = &  \displaystyle   {\omega-2 + 2p-1  \choose 2p-1}  + { \omega + 2p \choose 2p+1 }.
\end{array}
\]
\end{proof}

One step up from the perfect graphs are the perfectly divisible graphs. A graph $G$ is said to be \emph{perfectly divisible} if for all induced subgraphs $H$ of it, $V (H)$ can be partitioned into two sets $A$ and $B$ such that $H[A]$ is perfect and $\omega(H[B])<\omega(H).$ This lemma was proved by Chudnovsky and Sivaraman \cite{CS19}.
\begin{lem}
If $G$ is a perfectly divisible graph, then $\chi(G) \leq {\omega(G)+1 \choose 2}$.
\end{lem}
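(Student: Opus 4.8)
The plan is to induct on the clique number $\omega(G)$. For the base case $\omega(G)=1$ the graph $G$ has no edges, so $\chi(G)=1=\binom{2}{2}$, which is exactly the claimed bound (the case $\omega(G)=0$, the empty graph, is trivial). So assume $\omega(G)=\omega\geq 2$ and that the statement holds for every perfectly divisible graph of strictly smaller clique number.

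Applying the definition of perfect divisibility to the induced subgraph $H=G$ itself, I would fix a partition $V(G)=A\cup B$ with $G[A]$ perfect and $\omega(G[B])<\omega$, i.e.\ $\omega(G[B])\leq\omega-1$. Since $G[A]$ is perfect, $\chi(G[A])=\omega(G[A])\leq\omega(G)=\omega$. For $G[B]$ the key observation is that perfect divisibility is a hereditary property: every induced subgraph of $G[B]$ is an induced subgraph of $G$, so the defining condition for $G$ applies verbatim and shows $G[B]$ is perfectly divisible. By the induction hypothesis applied to $G[B]$,
\[ \chi(G[B]) \leq \binom{\omega(G[B])+1}{2} \leq \binom{\omega}{2}. \]
Coloring $A$ and $B$ with disjoint palettes, and using Pascal's identity $\binom{\omega}{1}+\binom{\omega}{2}=\binom{\omega+1}{2}$, I get
\[ \chi(G) \leq \chi(G[A]) + \chi(G[B]) \leq \omega + \binom{\omega}{2} = \binom{\omega+1}{2}, \]
which completes the induction.

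The only point that needs a word of care is the use of the hereditary property of perfect divisibility for the subgraph $G[B]$; this is immediate because the quantifier in the definition already ranges over all induced subgraphs, so no genuine obstacle arises. Everything else is the standard strategy of coloring the perfect part with $\omega$ colors and recursing on the part of smaller clique number, together with the sub-additivity of $\chi$ over a vertex partition.
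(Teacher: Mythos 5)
Your proof is correct: the induction on $\omega(G)$, using the hereditarity of perfect divisibility to recurse on $G[B]$ and coloring the perfect part $G[A]$ with at most $\omega$ colors, is exactly the standard argument. The paper itself gives no proof, only the citation to Chudnovsky and Sivaraman, and your argument is the one from that reference.
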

Along the lines above, it is straightforward to prove this this:
\begin{cor} If ${\bf H}$ is a set of graphs such that all ${\bf H}$--free graphs are perfectly divisible, then the $(p-1)K_2\cup {\bf H}$--free graphs are $\chi$--bounded by ${\omega-1 + 2p  \choose 2p}.$
\end{cor}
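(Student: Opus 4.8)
The plan is to feed the Chudnovsky--Sivaraman bound into Theorem~\ref{pK2extend} and then collapse the resulting sum with a binomial convolution of exactly the type used for Lemmas~\ref{doubleBumpLemma} and~\ref{singleBumpLemma}. First I would note that, since every ${\bf H  }$--free graph is perfectly divisible, the lemma of Chudnovsky and Sivaraman stated above says precisely that the class of ${\bf H  }$--free graphs is $\chi$--bounded by $f_{{\bf H  } }(k) = {k+1 \choose 2}$, and in particular $f_{{\bf H  } }(1)=1$. When $p=1$ we have $(p-1)K_2 \cup {\bf H  } = {\bf H  }$ and the asserted bound ${\omega-1+2 \choose 2}={\omega+1 \choose 2}$ is just this function, so the claim is immediate; hence I would assume $p\geq 2$ and apply Theorem~\ref{pK2extend} with $p$ replaced by $p-1$. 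This gives that the $(p-1)K_2 \cup {\bf H  }$--free graphs are $\chi$--bounded by
\[
f_{(p-1)K_2 \cup {\bf H  } }(\omega) = \sum_{k=1}^{\omega}   {\omega-k + 2p-3  \choose 2p-3}    {k+1 \choose 2}.
\]

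Next I would show that this sum is exactly ${\omega-1+2p \choose 2p}$, which finishes the proof. The cleanest route is the combinatorial one already used twice in Section~2: the quantity ${\omega-1+2p \choose 2p}$ counts the $2p$--element subsets of $\{0,1,\ldots,\omega+2p-2\}$, and I would sort each such subset into a bucket according to the value $m=k+1$ of its \emph{third smallest} element. The two elements below $m$ can be chosen in ${k+1 \choose 2}$ ways and the $2p-3$ elements above $m$ in ${\omega+2p-2-m \choose 2p-3}={\omega-k+2p-3 \choose 2p-3}$ ways, where $k$ ranges over $1,\ldots,\omega$; summing the bucket sizes gives the claimed identity. (Equivalently, this is the Vandermonde--Chu convolution ${n+a+b+1 \choose a+b+1}=\sum_{i=0}^{n}{i+b \choose b}{n-i+a \choose a}$ with $n=\omega-1$, $b=2$, $a=2p-3$; it is the natural ``degree two'' analogue of Lemmas~\ref{singleBumpLemma} and~\ref{doubleBumpLemma}, which are its $b=0$ and $b=1$ cases.)

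I do not expect any genuine obstacle: once Theorem~\ref{pK2extend} is invoked the whole content is a standard binomial identity, so the only things demanding a little care are the bookkeeping of summation indices and the edge case $p=1$, where Theorem~\ref{pK2extend} does not apply and one falls back directly on the Chudnovsky--Sivaraman lemma. If one insisted on a self-contained argument, the appeal to Theorem~\ref{pK2extend} could be unwound into $p-1$ successive applications of the construction in the proof of Lemma~\ref{K2extend}, but that only duplicates work already done.
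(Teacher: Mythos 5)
Your proof is correct and is exactly the argument the paper intends (the paper omits it as ``straightforward along the lines above''): feed the Chudnovsky--Sivaraman bound $f_{{\bf H}}(k)={k+1\choose 2}$ into Theorem~\ref{pK2extend} with $p-1$ copies of $K_2$ and collapse the sum via the Vandermonde convolution, whose $b=0$ and $b=1$ cases are Lemmas~\ref{singleBumpLemma} and~\ref{doubleBumpLemma}. Your index bookkeeping checks out, and separating the $p=1$ case is the right way to handle the fact that Theorem~\ref{pK2extend} is stated for at least one copy of $K_2$.
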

For example, Chudnovsky and Sivaraman \cite{CS19} proved that $\{P_5,bull\}$--free graphs are perfectly divisible.
\begin{cor} The $(p-1)K_2\cup  \{P_5,bull\}$--free graphs are $\chi$--bounded by ${\omega-1 + 2p  \choose 2p}.$
\end{cor}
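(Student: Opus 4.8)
The plan is to obtain this as a direct instance of the previous corollary. Recall that Chudnovsky and Sivaraman \cite{CS19} proved that $\{P_5,bull\}$--free graphs are perfectly divisible; taking ${\bf H}=\{P_5,bull\}$ in the previous corollary then immediately yields that the $(p-1)K_2\cup\{P_5,bull\}$--free graphs are $\chi$--bounded by ${\omega-1+2p\choose 2p}$. So the only thing that genuinely needs arguing is the previous corollary itself, which the text only asserts to be ``straightforward''; I would spell out the chain as follows.

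By the Chudnovsky--Sivaraman lemma quoted just above, a perfectly divisible graph $G$ satisfies $\chi(G)\leq{\omega(G)+1\choose 2}$, so the class of ${\bf H}$--free graphs is $\chi$--bounded by $f_{{\bf H}}(k)={k+1\choose 2}$; in particular $f_{{\bf H}}(1)=1$, which is all that is needed to color the independent sets $G[D_i]$ in the proof of Lemma \ref{K2extend}. Applying Theorem \ref{pK2extend} with $p-1$ in place of $p$ gives that the $(p-1)K_2\cup{\bf H}$--free graphs are $\chi$--bounded by
\[ f_{(p-1)K_2\cup{\bf H}}(\omega)=\sum_{k=1}^{\omega}{\omega-k+2p-3\choose 2p-3}{k+1\choose 2}, \]
and it remains to check that this equals ${\omega-1+2p\choose 2p}$. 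I would do this using only the elementary identities already in the paper: write ${k+1\choose 2}=\sum_{l=1}^{k}{l\choose 1}$, swap the order of summation, evaluate the inner sum $\sum_{k=l}^{\omega}{\omega-k+2p-3\choose 2p-3}={\omega-l+2p-2\choose 2p-2}$ by Lemma \ref{singleBumpLemma}, and then, after the substitution $j=\omega-l$, recognize $\sum_{l=1}^{\omega}l\,{\omega-l+2p-2\choose 2p-2}$ as an instance of Lemma \ref{doubleBumpLemma} with $m=2p-2$ and $n=\omega-1$, which produces ${\omega-1+2p\choose 2p}$.

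There is no real obstacle here; the work is bookkeeping, and there are two minor points of care. First, Theorem \ref{pK2extend} is being applied with a non-constant bounding function, so I would verify that its proof (and that of Lemma \ref{K2extend} behind it) never uses monotonicity of $f_{{\bf H}}$ or the normalization $f_{{\bf H}}(k)=c$ for $k>1$ -- inspecting Lemma \ref{K2extend}, it only uses $\chi(G[C_{i,j}])\leq f_{{\bf H}}(\omega-j+2)$ and $\chi(G[D_i])=1\leq f_{{\bf H}}(1)$, both fine for $f_{{\bf H}}(k)={k+1\choose 2}$. Second, the reduction through Theorem \ref{pK2extend} is for $p\geq 2$ (so that $p-1\geq 1$); the case $p=1$ is not a special case of that theorem but is immediate, since then $(p-1)K_2\cup\{P_5,bull\}$--free is just $\{P_5,bull\}$--free and ${\omega-1+2p\choose 2p}={\omega+1\choose 2}$ is exactly the Chudnovsky--Sivaraman bound for perfectly divisible graphs, confirming consistency at the boundary.
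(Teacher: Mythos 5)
Your proposal is correct and follows exactly the route the paper intends: the corollary is just the instantiation ${\bf H}=\{P_5,bull\}$ of the preceding corollary, justified by the Chudnovsky--Sivaraman result that $\{P_5,bull\}$--free graphs are perfectly divisible. The extra work you do to supply the omitted ``straightforward'' proof of that preceding corollary --- applying Theorem \ref{pK2extend} with $f_{{\bf H}}(k)={k+1\choose 2}$ and verifying $\sum_{k=1}^{\omega}{\omega-k+2p-3\choose 2p-3}{k+1\choose 2}={\omega-1+2p\choose 2p}$ via Lemmas \ref{singleBumpLemma} and \ref{doubleBumpLemma} --- checks out, including the boundary case $p=1$.
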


\section{A class of graphs for vanishing syzygies}

Now we turn to the main consideration of this paper, how to deal with the induced subgraphs that are prohibited due to that syzygies vanish. 

\begin{defn}
Let ${\bf B}_{n,0}$ be the set of graphs on $n\geq 2$ vertices of maximal degree $n-1$, and let 
\[{\bf B}_{n,d} = \bigcup_{m=2d}^{n-2} K_{n-m-2}+(K_2\cup {\bf B}_{m,d-1}) \] for any integers $d>0$ and $n\geq 2(d+1).$
\end{defn}

\begin{exa}
The bowtie is in $B_{5,1}.$
\end{exa}

\begin{defn}
For any integers $d\geq 0$ and $n\geq 2(d+1)$ let the function $g_{n,d}: \{1,2,\ldots\} \rightarrow \{1,2,\ldots\}$ be
\[\begin{array}{rcll}
g_{n,d}(1) &= &1 & \textrm{for $d\geq 0$ and $n\geq 2(d+1),$} \\
g_{n,0}(\omega) &= & n-1 & \textrm{for $\omega > 1$ and $n\geq 2,$}\\
g_{n,d}(\omega) & = & \displaystyle  \sum_{k=1}^{\omega} (\omega-k+1) g_{\max(n+k-\omega-2,2d),d-1}( k ) &  \textrm{for $\omega > 1, d>0,$ and $n\geq 2(d+1).$}\\
\end{array}
\]
\end{defn}

\begin{thm}\label{bigthm}
For any integers $d\geq 0$ and $n\geq 2(d+1)$ the class of ${\bf B}_{n,d}$--free graphs is $\chi$--bounded by $g_{n,d}(\omega).$
\end{thm}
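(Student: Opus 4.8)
The plan is to prove the statement by induction on $d$, using the proof of Lemma~\ref{K2extend} as the structural template: the generic family ${\bf H}$ there becomes, one level down, one of the families ${\bf B}_{m,d-1}$. First I would record a preliminary fact used repeatedly, namely that for every admissible pair $n,d$ the function $g_{n,d}$ is non-decreasing in $\omega$. This has its own short induction on $d$: for $d=0$ it is clear since $g_{n,0}$ runs through $1,n-1,n-1,\ldots$ with $n\ge 2$; for $d>0$ one matches the $k$-th summand of $g_{n,d}(\omega)$ with the $(k+1)$-st summand of $g_{n,d}(\omega+1)$ — these carry the same coefficient $\omega-k+1$ and the same first index $\max(n+k-\omega-2,2d)$, while the second index rises from $k$ to $k+1$, where the inductive hypothesis for $d-1$ applies — and observes that the unmatched $k=1$ summand of $g_{n,d}(\omega+1)$ is non-negative.

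For the base case $d=0$: a graph is ${\bf B}_{n,0}$-free exactly when it has no vertex of degree at least $n-1$ (such a vertex together with $n-1$ of its neighbours induces a member of ${\bf B}_{n,0}$, and conversely), that is, when its maximum degree is at most $n-2$; greedy colouring then gives $\chi\le n-1=g_{n,0}(\omega)$ for $\omega\ge 2$ and $\chi=1=g_{n,0}(1)$ for $\omega=1$. For the inductive step let $G$ be ${\bf B}_{n,d}$-free; the case $\omega=\omega(G)=1$ is trivial, so assume $\omega\ge 2$ and fix a maximum clique $1,\ldots,\omega$. From the proof of Lemma~\ref{K2extend} I would carry over verbatim the definitions of $C_{i,j}'\supseteq C_{i,j}$ and $D_i'\subseteq D_i$, the fact that $\{D_i\}_i\cup\{C_{i,j}\}_{i<j}$ partitions $V(G)$, the independence of each $D_i$ (an edge inside $D_i'$ would enlarge the clique to $\omega+1$ vertices), and the fact that every $v\in C_{i,j}$ is adjacent to all $j-2$ vertices of $Q_{i,j}:=\{1,\ldots,i-1\}\cup\{i+1,\ldots,j-1\}$; hence $Q_{i,j}$ is a clique joined to $G[C_{i,j}]$ in $G$, so $\chi(G[D_i])=1$ and $\omega(G[C_{i,j}])\le\omega-j+2$.

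The single new ingredient is to pin down which family $G[C_{i,j}]$ must avoid. Put $M=\max(n-j,2d)$ and $r=n-M-2$; since $j\ge 2$ and $n\ge 2d+2$ one has $2d\le M\le n-2$ and $0\le r=\min(j-2,\,n-2d-2)\le j-2=|Q_{i,j}|$. If $G[C_{i,j}]$ contained an induced subgraph $H'\in{\bf B}_{M,d-1}$ (so $|V(H')|=M$), then choosing any $Q'\subseteq Q_{i,j}$ with $|Q'|=r$ the induced subgraph of $G$ on $Q'\cup\{i,j\}\cup V(H')$ would be exactly $K_r+(K_2\cup H')$: indeed $G[Q']=K_r$, the vertices $i$ and $j$ are adjacent but non-adjacent to $V(H')\subseteq C_{i,j}'$, and $Q'$ is joined both to $\{i,j\}$ and to $V(H')$. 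Since $r=n-M-2$ and $2d\le M\le n-2$, this graph belongs to ${\bf B}_{n,d}$, contradicting ${\bf B}_{n,d}$-freeness. Hence $G[C_{i,j}]$ is ${\bf B}_{M,d-1}$-free, and because $M\ge 2((d-1)+1)$ the inductive hypothesis applies, yielding $\chi(G[C_{i,j}])\le g_{M,d-1}(\omega(G[C_{i,j}]))\le g_{M,d-1}(\omega-j+2)$, the last inequality by the monotonicity of $g_{M,d-1}$.

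It remains only to add up, exactly as at the end of the proof of Lemma~\ref{K2extend}: $\chi(G)\le\sum_{i=1}^\omega\chi(G[D_i])+\sum_{1\le i<j\le\omega}\chi(G[C_{i,j}])\le\omega+\sum_{j=2}^{\omega}(j-1)\,g_{\max(n-j,2d),d-1}(\omega-j+2)$, after which the substitution $k=\omega-j+2$ (so $j-1=\omega-k+1$ and $\max(n-j,2d)=\max(n+k-\omega-2,2d)$), together with absorbing the term $\omega=(\omega-1+1)\,g_{\max(n-\omega-1,2d),d-1}(1)$ as the $k=1$ summand, produces precisely $g_{n,d}(\omega)=\sum_{k=1}^{\omega}(\omega-k+1)\,g_{\max(n+k-\omega-2,2d),d-1}(k)$. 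The main obstacle is the third paragraph above: one must hit upon the index $M=\max(n-j,2d)$ and realise that the clique $K_{j-2}=G[Q_{i,j}]$ should be trimmed to a $K_{n-M-2}$, so that $K_{n-M-2}+(K_2\cup H')$ has exactly $n$ vertices and $M$ lands in the admissible range $[2d,n-2]$ of the union defining ${\bf B}_{n,d}$ — the truncation at $2d$ being exactly what saves the argument when $j>n-2d$. Everything else, including the monotonicity lemma and the final re-indexing, is bookkeeping, provided one is careful to keep the $\max(\cdot,2d)$ and the index shift aligned.
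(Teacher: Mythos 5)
Your proof is correct and follows essentially the same route as the paper's: induction on $d$, reusing the $D_i$/$C_{i,j}$ decomposition from Lemma~\ref{K2extend}, showing that $G[C_{i,j}]$ is ${\bf B}_{\max(n-j,2d),d-1}$--free, and re-indexing the resulting sum. Your single argument with $M=\max(n-j,2d)$ merges the paper's two cases ($n-j\geq 2d$ versus $n-j<2d$), and your explicit monotonicity lemma for $g_{n,d}$ makes precise a step the paper uses only implicitly.
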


\begin{proof}
We are to show that $\chi(G) \leq g_{n,d}(\omega(G))$ for every ${\bf B}_{n,d}$--free graph $G.$

First note that for graphs with $\omega(G)=1$ it is satisfied, as $g_{n,d}(1)=1$ for all $n,d.$ In calculations later on in the proof, $g_{n,d}(1)$ will sometimes be replaced by $g_{\ast,\ast}(1),$ as the value of the function at $1$ is independent of the index $n,d.$

The proof will be by induction on $d.$ The base case is $d=0.$ By definition, ${\bf B}_{n,0}$ is the set of graphs on $n$ vertices of maximal degree $n-1.$ A ${\bf B}_{n,0}$--free graph $G$ contains no induced subgraph on 
$n$ vertices of maximal degree $n-1,$ or equivalently, the maximal degree of $G$ is at most $n-2.$ As the maximal degree of $G$ is at most $n-2,$ one may color it with $n-1$ colors, and $\chi(G) \leq n-1 = g_{n,0}(\chi(G))$ as desired.

Now to the induction step. We are provided with a ${\bf B}_{n,d}$--free graph $G$ and $d>0.$ The induction step follows the setup of the proof of Lemma \ref{K2extend}. Find a clique on the vertices $1,2,\ldots, \omega=\omega(G),$ and define the sets $C_{i,j}$ and $D_i$ as in that proof. For any induced subgraph $H$ of $G$ on $U\subseteq C_{i,j},$ the induced subgraph of $G$ on $U \cup \{1,2,\ldots,j\}$ is isomorphic to $K_{j-2}+(K_2 \cup H).$

If $n-j \geq 2d$ then any induced subgraph $H$ of $C_{i,j}$ on $n-j$ vertices turns into the induced subgraph $K_{j-2}+(K_2 \cup H)$ of $G$ on $n$ vertices. By assumption, the graph $G$ is ${\bf B}_{n,d}$--free, where
$ {\bf B}_{n,d} \supseteq K_{j-2}+(K_2\cup {\bf B}_{n-j,d-1}).$ This shows that $G[C_{i,j}]$ is  ${\bf B}_{n-j,d-1}$--free.

If $n-j < 2d$ then any induced subgraph $H$ of $C_{i,j}$ on $2d$ vertices turns into the induced subgraph $K_{j-2}+(K_2 \cup H)$ of $G$ on $j+2d>n$ vertices. Pick any subgraph $K_{n-2d-2}$ of $K_{j-2}$ to get an induced subgraph $K_{n-2d-2}+(K_2 \cup H)$ of $G.$  By assumption, the graph $G$ is ${\bf B}_{n,d}$--free, where $ {\bf B}_{n,d} \supseteq K_{n-2d-2}+(K_2\cup {\bf B}_{2d,d-1}).$ This shows that $G[C_{i,j}]$ is  ${\bf B}_{2d,d-1}$--free.

Now proceed as in the Proof of Lemma \ref{K2extend} with $\chi(G[D_i])\leq  1 =  g_{\ast,\ast}(1),$ and use Lemma \ref{doubleBumpLemma} to evaluate the sum:
\[\begin{array}{rcl} 
\chi(G) & \leq & \displaystyle  \sum_{i=1} ^\omega \chi(G[D_i]) +  \sum_{1\leq i < j \leq \omega}  \chi(G[C_{i,j}]) \\
& \leq & \displaystyle  \sum_{i=1} ^\omega  g_\ast ( 1) +  \sum_{1\leq i < j \leq \omega}   g_{\max(n-j,2d),d-1}( \omega-j+2 ) \\
& = & \displaystyle  \sum_{1\leq i < j \leq \omega+1}   g_{\max(n-j,2d),d-1}( \omega-j+2 ) \\
& = & \displaystyle  \sum_{j=2}^{\omega+1} (j-1) g_{\max(n-j,2d),d-1}( \omega-j+2 ) \\
& = & \displaystyle  \sum_{k=1}^{\omega} (\omega-k+1) g_{\max(n+k-\omega-2,2d),d-1}( k ) \\
& = & g_{n,d}(\omega).
\end{array}
\]
\end{proof}

\begin{prop}\label{prepp} For $\omega \geq 1$ it holds that
\[ g_{n,d}(\omega) \leq {\omega-1+2d \choose 2d} +  {n-2 \choose 2d+1} \]
and it is sharp exactly when $(d=0, \omega =1, n>2)$ or $(d>0, n>\omega+2d+1).$
\end{prop}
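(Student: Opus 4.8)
The plan is to prove the inequality and to locate the cases of strictness simultaneously, by strong induction on $d$, with Lemma~\ref{doubleBumpLemma} doing every summation. For the base case $d=0$ the definition gives $g_{n,0}(1)=1$ and $g_{n,0}(\omega)=n-1$ for $\omega\ge 2$, while the right-hand side is $\binom{\omega-1}{0}+\binom{n-2}{1}=\max(n-1,1)$; comparing the two proves the inequality for $d=0$ and shows it is an equality except precisely when $\omega=1$ and $n>2$.

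For the inductive step assume the full statement (inequality and sharpness) for $d-1$ and take $d>0$. The subcase $\omega=1$ is immediate from $g_{n,d}(1)=1\le 1+\binom{n-2}{2d+1}$, equality holding iff $n\le 2d+2$, i.e.\ iff $n\le\omega+2d+1$. For $\omega\ge 2$, put $m_k=\max(n+k-\omega-2,\,2d)$, so $m_k\ge 2((d-1)+1)$ and the hypothesis applies to each $g_{m_k,d-1}(k)$ in the recursion $g_{n,d}(\omega)=\sum_{k=1}^{\omega}(\omega-k+1)\,g_{m_k,d-1}(k)$. Since all coefficients $\omega-k+1$ are positive, substituting the hypothesis gives $g_{n,d}(\omega)\le S_1+S_2$ with $S_1=\sum_{k=1}^{\omega}(\omega-k+1)\binom{k+2d-3}{2d-2}$ and $S_2=\sum_{k=1}^{\omega}(\omega-k+1)\binom{m_k-2}{2d-1}$. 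Reindexing $i=k-1$ and applying Lemma~\ref{doubleBumpLemma} (with its parameters $n,m$ taken to be $\omega-1,2d-2$) evaluates $S_1=\binom{\omega-1+2d}{2d}$ exactly. In $S_2$, since $m_k-2\ge 2d-2\ge 0$, a summand vanishes unless $m_k\ge 2d+1$, where $m_k=n+k-\omega-2$; reindexing $a=n+k-\omega-4$ then exhibits $S_2$ as the sum $\sum_a(n-a-3)\binom{a}{2d-1}$ over $a$ running from $\max(2d-1,\,n-\omega-3)$ to $n-4$, a truncation from below of the ``full'' sum $\sum_{a=2d-1}^{n-4}(n-a-3)\binom{a}{2d-1}$, which again by Lemma~\ref{doubleBumpLemma} equals $\binom{n-2}{2d+1}$ (the degenerate subcase $n=2d+2$, where everything is empty, being immediate). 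As every term of the full sum is strictly positive, $S_2\le\binom{n-2}{2d+1}$ with equality iff the truncation drops nothing, i.e.\ iff $n\le\omega+2d+2$. Adding $S_1$ and $S_2$ proves the inequality for $d$.

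It remains to determine that the inequality is strict precisely when $(d=0,\ \omega=1,\ n>2)$ or $(d>0,\ n>\omega+2d+1)$, equivalently that it is an equality in every other case. For $\omega\ge 2$ and $d>0$, equality in $g_{n,d}(\omega)\le S_1+S_2$ requires the hypothesis to be attained at every recursive call $(m_k,d-1,k)$; by the inductive sharpness statement this fails iff for some $k$ either $d-1=0,\ k=1,\ m_1>2$, or $d-1>0$ and $m_k>k+2d-1$. The decisive observation is that both conditions are independent of $k$ and amount to the same inequality: since $2d\le k+2d-1$ for $k\ge 1$, the maximum in $m_k$ is governed by its first argument whenever relevant, so $m_k>k+2d-1\iff n+k-\omega-2>k+2d-1\iff n>\omega+2d+1$, and for $d=1$ likewise $m_1>2\iff n-\omega-1>2\iff n>\omega+2d+1$. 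Hence the hypothesis is attained at every recursive call iff $n\le\omega+2d+1$, and combining this with the condition $n\le\omega+2d+2$ needed above for $S_2$ to reach $\binom{n-2}{2d+1}$, equality in the proposition holds iff $n\le\omega+2d+1$. Together with the base case and the $\omega=1$ subcase this is exactly the stated characterization.

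The main obstacle is this sharpness analysis: one must propagate the \emph{entire} inductive claim, not merely the inequality, so as to know when each $g_{m_k,d-1}(k)$ meets its bound; the reward is the clean fact that the obstruction does not depend on $k$ and collapses to the single inequality $n>\omega+2d+1$. The remaining bookkeeping — the $\max$ inside $m_k$, and binomial coefficients whose top entry may be smaller than (or even less than) the bottom — needs care but is otherwise routine.
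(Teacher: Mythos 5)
Your proof is correct and follows essentially the same route as the paper: induction on $d$, substituting the inductive bound into the recursion, splitting the resulting sum into two pieces evaluated by Lemma~\ref{doubleBumpLemma}, and tracing strictness back through the recursive calls to the single condition $n>\omega+2d+1$. If anything you are more careful than the paper at two points --- the truncation of the second sum when $n>\omega+2d+2$ (where the paper's re-indexed sum is really only an upper bound, harmless since the dropped terms are positive) and the explicit $\omega=1$, $d>0$ subcase of the sharpness claim --- but these are refinements of the same argument, not a different one.
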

\begin{proof} First we verify the case $d=0,$
\[
{\omega-1+2d \choose 2d} +  {n-2 \choose 2d+1} = {\omega-1 \choose 0} +  {n-2 \choose 1} = 1 + n -2= n-1.
\]
By definition, $g_{n,0}(\omega)=n-1$ for $\omega>1,$ so the proposition statement is an equality in those cases. 
By definition, $g_{n,0}(\omega)=1$ for $\omega=1,$ so the proposition statement is an equality for $n=2$ and a strict inequality for $n>2.$

Now we continue by induction on $d>0.$ Consider that 
\[  \begin{array}{rcl}
g_{n,d}(\omega) & = &  \displaystyle \sum_{k=1}^{\omega} (\omega-k+1) g_{\max(n+k-\omega-2,2d),d-1}( k )  \\
& \leq & \displaystyle \sum_{k=1}^{\omega} (\omega-k+1) \left(    {k-1+2(d-1) \choose 2(d-1)} +  {\max(n+k-\omega-2,2d) -2 \choose 2(d-1)+1}     \right). \\
\end{array} \]
Let's break down the two parts of the sum. The first part is 
\[
\sum_{k=1}^{\omega} (\omega-k+1)   {k-1+2(d-1) \choose 2(d-1)}  = \sum_{i=0}^{\omega-1} ((\omega-1)+1-i)   {i+2d-2 \choose 2d-2}  = { \omega - 1 +2d \choose 2d }
\]
by Lemma \ref{doubleBumpLemma}. The second part is 
\[
\sum_{k=1}^{\omega} (\omega-k+1)  {\max(n+k-\omega-2,2d) -2 \choose 2(d-1)+1}.
\]
The binomial coefficient vanishes unless $n+k-\omega-2 \geq 2d+1,$ or equivalently, that $k \geq \omega - n + 2d + 3.$ The second part may be rewritten as:
\[
\begin{array}{rcl}
\displaystyle  \sum_{k=\omega - n + 2d + 3}^{\omega} (\omega-k+1)  {n+k-\omega-4 \choose 2d-1} &  = & \displaystyle  \sum_{i=0}^{n - 2d - 3} ((n - 2d - 3)+1-i)  {i + 2d -1 \choose 2d-1}    \\
    & = & \displaystyle  {n - 2d - 3 + 2d -1 +2  \choose 2d-1 + 2 }    \\
    & = & \displaystyle  {n -2   \choose 2d+1 },    \\
\end{array}
\]
with an application of Lemma \ref{doubleBumpLemma}. Adding the two pieces up proves inequality.

Now to the question of when the inequality is an equality or sharp. For $d=0$ it's sharp exactly when $\omega=1$ and $n>2.$ For $d=1,$ in the expansion of $g_{n,1}(\omega),$ the terms are of the type:
\[ g_{\max(n+k-\omega-2,2d),d-1}( k ) \]
for $1 \leq k \leq \omega .$ The only one that could be sharp is for $k=1:$
\[ g_{\max(n-\omega-1,2),0}( 1 ) \]
The condition is that $n-\omega-1>2,$ or equivalently,  $n>\omega+3 $ for it to be sharp. 

For $d>1$ we prove it by induction on $d.$ The inequality for $g_{n,d}(\omega)$ is sharp exactly when $n> \omega +2d+1.$ We expanded $g_{n,d}(\omega)$ into terms of the type
\[ g_{\max(n+k-\omega-2,2d),d-1}( k ). \]
By induction, the inequality for one of them is sharp exactly when
\[
\max(n+k-\omega-2,2d) > k + 2(d-1) + 1.
\]
It can never be that $2d> k + 2(d-1) + 1$, as $k\geq 1.$ It remains that
\[ n+k-\omega-2>2d \,\, \textrm{ and } \,\, n+k-\omega-2> k + 2(d-1) + 1. \]
The rightmost inequality implies the leftmost one, as $k\geq 1.$ It simplifies to
the desired
\[ n> \omega +2d+1
\]
and we are done.
\end{proof}

 \begin{prop}\label{gTriangleFree} $ g_{n,d}(2) = n-1$ for $d\geq 0$ and $n\geq 2(d+1).$
\end{prop}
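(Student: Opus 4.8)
The plan is to induct on $d$, feeding the defining recursion for $g_{n,d}$ directly at the argument $\omega = 2$. For the base case $d = 0$ there is nothing to do: by definition $g_{n,0}(\omega) = n-1$ for every $\omega > 1$, so in particular $g_{n,0}(2) = n-1$.

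For the inductive step I would take $d > 0$ and assume the claim at level $d-1$, i.e. $g_{m,d-1}(2) = m-1$ for all $m \geq 2d$. Since $\omega = 2 > 1$, $d > 0$, and $n \geq 2(d+1)$, the third branch of the definition applies and unwinds to
\[
g_{n,d}(2) \;=\; 2\, g_{\max(n-3,\,2d),\,d-1}(1) \;+\; g_{\max(n-2,\,2d),\,d-1}(2).
\]
The first summand equals $2$, because the value of $g$ at $1$ is always $1$ (here one only needs $\max(n-3,2d) \ge 2d$, which is clear). For the second summand, the hypothesis $n \ge 2(d+1)$ gives $n-2 \ge 2d$, so $\max(n-2,2d) = n-2$; moreover $n-2 \ge 2d = 2\bigl((d-1)+1\bigr)$, so $g_{n-2,\,d-1}$ is defined and the induction hypothesis yields $g_{n-2,\,d-1}(2) = n-3$. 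Adding the two pieces gives $g_{n,d}(2) = 2 + (n-3) = n-1$, as wanted.

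There is essentially no obstacle here; the only point that needs care — and the only place the assumption $n \ge 2(d+1)$ is actually used — is checking that the index maximum collapses to $n-2$ and that the reduced index $n-2$ still lies in the admissible range for level $d-1$; both follow from $n-2 \ge 2d$. One could instead hope to extract the identity from Proposition~\ref{prepp}, but its bound at $\omega = 2$ is $(2d+1) + \binom{n-2}{2d+1}$, which is not tight for $n > 2d+2$, so the direct recursion is the right tool.
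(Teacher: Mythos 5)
Your proof is correct and follows essentially the same route as the paper: induction on $d$, with the base case read off from the definition and the step obtained by evaluating the defining recursion at $\omega=2$, using $n\geq 2(d+1)$ to collapse $\max(n-2,2d)$ to $n-2$. If anything, your bookkeeping $2\cdot 1+(n-3)=n-1$ is slightly cleaner than the paper's displayed computation, which contains a harmless typo at that point.
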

\begin{proof} We prove it by induction on $d.$ For $d=0$ it is true by definition. Assume that $d>0.$
By the assumption $n\geq 2(d+1),$
\[ \max(n-2,2d) = n-2. \] 
By definition,
\[ g_{n,d}(2)  = 2g_{\max(n-3,2d),d-1}(1) + g_{\max(n-2,2d),d-1}(2) = 1 + g_{n-2,d-1}=1+n-2=n-1.\]
\end{proof}

 We record the exact values of $g_{n,d}(\omega)$ for some small $n,d.$ The proposition above applies exactly for $d>0$ when $\omega \geq n - 2d - 1 .$
 
\begin{exa}  For $\omega > 1$ the functions 
 \[\begin{array}{|c|c|c|c|c|c|}
 \hline
 g_{2,0}(\omega) & g_{3,0}(\omega) & g_{4,0}(\omega) & g_{5,0}(\omega) & g_{6,0}(\omega) & g_{7,0}(\omega) \\
 \hline & g_{4,1}(\omega)& g_{5,1}(\omega)  & g_{6,1}(\omega)  & g_{7,1}(\omega) & g_{8,1}(\omega)  \\
  \hline && g_{6,2}(\omega)& g_{7,2}(\omega)  & g_{8,2}(\omega)  & g_{9,2}(\omega) \\
   \hline &&& g_{8,3}(\omega)  & g_{9,3}(\omega)  & g_{10,3}(\omega) \\
    \hline &&&& g_{10,4}(\omega)  & g_{11,4}(\omega) \\
     \hline &&&&& g_{12,5}(\omega) \\
 \hline
 \end{array} \]
 are
 \[\begin{array}{|c|c|c|c|c|c|}
 \hline
 \omega \mapsto 1 &  \omega \mapsto 2 &  \omega \mapsto 3& \omega \mapsto 4 &  \omega \mapsto 5 &  \omega \mapsto 6  \\
 \hline &  \omega \mapsto {\omega+1 \choose 2}  &  \omega \mapsto {\omega+1 \choose 2}+1 &  2 \mapsto 5  & 2 \mapsto 6 & 2 \mapsto 7 \\
 &   &  &  \omega \mapsto {\omega+1 \choose 2}+4  & 3 \mapsto 13& 3 \mapsto 16  \\
 &   &  &  \textrm{for $\omega \geq 3$}  &  \omega \mapsto {\omega+1 \choose 2}+10 & 4 \mapsto 26  \\
  &   &  & &   \textrm{for $\omega \geq 4$}  &  \omega \mapsto {\omega+1 \choose 2}+20  \\
   &   &  & & &   \textrm{for $\omega \geq 5 $}    \\
  \hline && \omega \mapsto {\omega+3 \choose 4} & \omega \mapsto {\omega+3 \choose 4}+1  & 2 \mapsto 7 & 2 \mapsto 8 \\
    &&&  & \omega \mapsto {\omega+3 \choose 4}+6  & 3 \mapsto 26  \\
     &&&  & \textrm{for $\omega \geq 3 $} &  \omega \mapsto {\omega+3 \choose 4}+21    \\
      &&&&  & \textrm{for $\omega \geq 4 $}   \\
   \hline &&&  \omega \mapsto {\omega+5 \choose 6}  & \omega \mapsto {\omega+5 \choose 6} +1   & 2\rightarrow 9   \\
   &&&   &  &  \omega \mapsto {\omega+5 \choose 6} + 8 \\
    &&&&  & \textrm{for $\omega \geq 3 $}   \\
    \hline &&&& \omega \mapsto {\omega+7 \choose 8}    & \omega \mapsto {\omega+7 \choose 8} +1 \\
     \hline &&&&&  \omega \mapsto {\omega+9 \choose 10}  \\
 \hline
 \end{array} \]
 where the only values not given by the results above are
 \[ g_{7,1}(3) =  3 g_{3,0}(1) + 2 g_{4,0}(2) +  g_{5,0}(3)  =  3\cdot 1 + 2 \cdot 3 + 4 = 3+6+4 = 13, \]
 \[ g_{8,1}(3) =  3 g_{4,0}(1) + 2 g_{5,0}(2) +  g_{6,0}(3)  = 3\cdot 1 + 2 \cdot 4 + 5 =  16, \]
 \[ g_{8,1}(4) =  4 g_{5,0}(1)  + 3 g_{4,0}(2) + 2 g_{5,0}(3) +  g_{6,0}(4) = 4\cdot 1 + 3 \cdot 3 + 2\cdot 4 + 5 = 26, \]
 \[ g_{9,2}(3) = 3 g_{5,1}(1) + 2 g_{6,1}(2) + g_{7,1}(3)= 3\cdot 1 +2\cdot 5 + 13 = 26. \]
 \end{exa}
 
\section{Algebraic results}

The \emph{independence complex} $\mathrm{Ind}(G)$ of a graph $G$ is the simplicial complex of independent sets of $G.$ The following three lemmas are elementary and essentially well-known:

\begin{lem} \label{indH0} $\dim \tilde{H}_0(\mathrm{Ind}(G)) = 0$ if and only if the complement of $G$ is connected.
\end{lem}

\begin{lem} \label{indSusp} $\dim \tilde{H}_{d+1}(\mathrm{Ind}(K_2 \cup G)) = \dim \tilde{H}_d(\mathrm{Ind}(G))$ for $d\geq 0.$
\end{lem}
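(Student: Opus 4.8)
The plan is to recognize $\mathrm{Ind}(K_2\cup G)$ as a suspension of $\mathrm{Ind}(G)$ and then apply the suspension isomorphism in reduced homology.

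First I would record the elementary fact that the independence complex of a disjoint union is the simplicial join of the independence complexes: a vertex set $S$ is independent in $G_1\cup G_2$ precisely when $S\cap V(G_1)$ is independent in $G_1$ and $S\cap V(G_2)$ is independent in $G_2$, since there are no edges between the two parts; hence $\mathrm{Ind}(G_1\cup G_2)=\mathrm{Ind}(G_1)*\mathrm{Ind}(G_2)$. Specializing to $G_1=K_2$, the complex $\mathrm{Ind}(K_2)$ consists of the two endpoints of the edge as disjoint $0$-simplices, i.e.\ a copy of $S^0$. Therefore $\mathrm{Ind}(K_2\cup G)=S^0*\mathrm{Ind}(G)=\Sigma\,\mathrm{Ind}(G)$, the (unreduced) suspension.

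The lemma then follows from the suspension isomorphism $\tilde{H}_{d+1}(\Sigma X)\cong\tilde{H}_d(X)$, valid for every simplicial complex $X$; taking $X=\mathrm{Ind}(G)$ and comparing dimensions over the coefficient field gives the claim. For a more self-contained derivation one can instead invoke the reduced Künneth formula for joins over a field, $\tilde{H}_n(X*Y)\cong\bigoplus_{i+j=n-1}\tilde{H}_i(X)\otimes\tilde{H}_j(Y)$: with $X=S^0$ only the summand $i=0$, $j=n-1$ survives, and $\tilde{H}_0(S^0)$ is one-dimensional, so $\tilde{H}_n(\mathrm{Ind}(K_2\cup G))\cong\tilde{H}_{n-1}(\mathrm{Ind}(G))$, which is the assertion with $n=d+1$.

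I do not expect a genuine obstacle, as this is a standard manipulation; the only point needing a line of care is the bottom of the range, namely checking that the degenerate case in which $G$ has no vertices (so $\mathrm{Ind}(G)=\{\emptyset\}$ and $\mathrm{Ind}(K_2\cup G)=S^0$) is consistent with the stated range $d\ge 0$, so that no spurious contribution appears. One may also note that, combined with Lemma \ref{indH0}, this yields an iterated description of the homology of independence complexes that is exactly what is needed for the graphs in ${\bf B}_{n,d}$.
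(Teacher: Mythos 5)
Your proof is correct and is exactly the standard argument the paper has in mind: the paper states this lemma without proof, calling it ``elementary and essentially well-known,'' and the intended justification is precisely the identification $\mathrm{Ind}(K_2\cup G)=S^0*\mathrm{Ind}(G)=\Sigma\,\mathrm{Ind}(G)$ followed by the suspension isomorphism. Your attention to the degenerate case of a vertexless $G$ is a sensible extra check and does not reveal any problem.
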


\begin{lem} \label{indDisj} $\dim \tilde{H}_{d}(\mathrm{Ind}(K_m + G)) = \dim \tilde{H}_d(\mathrm{Ind}(G))$ for $d > 0.$
\end{lem}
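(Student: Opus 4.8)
The plan is to describe $\mathrm{Ind}(K_m + G)$ explicitly and then read off its homology. Since the join $K_m + G$ puts an edge between every vertex of $K_m$ and every vertex of $G$, and since $K_m$ is itself a clique, an independent set of $K_m + G$ either is contained in $V(G)$ or consists of a single vertex of $K_m$. Hence the faces of $\mathrm{Ind}(K_m + G)$ are exactly the faces of $\mathrm{Ind}(G)$ together with the $m$ singletons $\{v\}$ for $v \in V(K_m)$; geometrically, $|\mathrm{Ind}(K_m + G)|$ is the disjoint union of $|\mathrm{Ind}(G)|$ with $m$ isolated points.

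From here I would finish in either of two equivalent ways. Topologically: reduced homology of a disjoint union satisfies $\tilde{H}_d(X \sqcup Y) \cong \tilde{H}_d(X) \oplus \tilde{H}_d(Y)$ in every degree $d > 0$ (it agrees with ordinary homology there), and a finite set of points has vanishing reduced homology in positive degrees; applying this with $X = |\mathrm{Ind}(G)|$ and $Y$ the $m$ added points gives $\tilde{H}_d(\mathrm{Ind}(K_m + G)) \cong \tilde{H}_d(\mathrm{Ind}(G))$ for $d > 0$, and taking dimensions proves the claim. Chain-theoretically: the augmented simplicial chain complexes of the two complexes coincide in homological degrees $\ge 1$ and differ in degree $0$ only by the free summand spanned by the $m$ new vertices; since every $1$-chain of $\mathrm{Ind}(K_m + G)$ is supported on edges already present in $\mathrm{Ind}(G)$, the map $\partial_1$ lands in the old degree-$0$ part, so $\ker \partial_1$, $\operatorname{im} \partial_2$, and all higher boundary data are unchanged, yielding $\tilde{H}_d(\mathrm{Ind}(K_m + G)) = \tilde{H}_d(\mathrm{Ind}(G))$ for all $d \ge 1$.

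There is no genuine obstacle here; the only care needed is bookkeeping at the bottom of the complex (the degree-$0$ term and the augmentation) and the degenerate cases $m = 0$, where $K_m + G = G$, and $V(G) = \emptyset$, where both sides vanish in positive degrees. I would also remark that the hypothesis $d > 0$ is essential: in degree $0$ the $m$ extra components raise $\dim \tilde{H}_0$ by $m$ (or by $m-1$ when $\mathrm{Ind}(G)$ is empty), which is precisely why this lemma and Lemma~\ref{indH0} are recorded separately.
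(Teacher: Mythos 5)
Your proof is correct. The paper states this lemma without proof (it is listed among three results called ``elementary and essentially well-known''), and your argument --- observing that $\mathrm{Ind}(K_m + G)$ is just $\mathrm{Ind}(G)$ together with $m$ isolated vertices, so that reduced homology in positive degrees is unchanged --- is exactly the standard argument the paper is implicitly invoking, with the degenerate cases and the degree-$0$ discrepancy handled correctly.
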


This is central for this paper:

\begin{lem}\label{central1}
If $H \in {\bf B}_{n,d}$ then $\dim \tilde{H}_d(\mathrm{Ind}(H)) > 0.$
\end{lem}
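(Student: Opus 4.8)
The plan is to induct on $d$, exactly mirroring the recursive definition of ${\bf B}_{n,d}$ and using the three homological lemmas (Lemmas \ref{indH0}, \ref{indSusp}, \ref{indDisj}) as the bookkeeping tools that track what happens to reduced homology under the operations $K_2 \cup (-)$ and $K_m + (-)$.

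\textbf{Base case $d=0$.} Here $H \in {\bf B}_{n,0}$ means $H$ is a graph on $n \geq 2$ vertices with some vertex $w$ of degree $n-1$, i.e. $w$ is adjacent to everything. Then $\{w\}$ is a maximal independent set that is disconnected from the rest: more precisely, the complement $\bar H$ has $w$ as an isolated vertex, so $\bar H$ is disconnected (it has at least the two vertices $w$ and anything else, $n\geq 2$). By Lemma \ref{indH0}, $\dim \tilde H_0(\mathrm{Ind}(H)) > 0$. This settles $d=0$.

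\textbf{Inductive step.} Suppose the claim holds for $d-1$ and let $H \in {\bf B}_{n,d}$. By definition of ${\bf B}_{n,d}$ there is an $m$ with $2d \le m \le n-2$ such that $H \cong K_{n-m-2} + (K_2 \cup H')$ for some $H' \in {\bf B}_{m,d-1}$. By the inductive hypothesis, $\dim \tilde H_{d-1}(\mathrm{Ind}(H')) > 0$. Apply Lemma \ref{indSusp} (suspension: $K_2 \cup (-)$ shifts homology up by one degree) to get $\dim \tilde H_{d}(\mathrm{Ind}(K_2 \cup H')) = \dim \tilde H_{d-1}(\mathrm{Ind}(H')) > 0$. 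Then apply Lemma \ref{indDisj} (joining with a complete graph $K_m$ preserves reduced homology in all positive degrees) with $m = n-m-2 \geq 0$; note $d \geq 1 > 0$, so the lemma applies and gives $\dim \tilde H_{d}(\mathrm{Ind}(K_{n-m-2} + (K_2 \cup H'))) = \dim \tilde H_{d}(\mathrm{Ind}(K_2 \cup H')) > 0$. This is exactly $\dim \tilde H_d(\mathrm{Ind}(H)) > 0$, completing the induction.

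\textbf{Anticipated obstacles.} This is essentially a routine unwinding of definitions against three black-box lemmas, so I expect no serious obstacle. The only things to be careful about are edge cases in the lemma hypotheses: Lemma \ref{indDisj} requires $d > 0$, which holds in the inductive step since there $d \geq 1$; and Lemma \ref{indSusp} requires $d \geq 0$ for the shift from degree $d-1$ to $d$, which holds since $d \geq 1$ means $d-1 \geq 0$. One should also double-check that $K_{n-m-2}$ with $n-m-2 = 0$ (the case $m = n-2$) is handled — then $K_0 + G = G$ and the join step is vacuous, which is consistent. Finally, one must make sure the definition is read correctly: ${\bf B}_{n,d}$ is a \emph{set} of graphs and $K_{n-m-2}+(K_2\cup {\bf B}_{m,d-1})$ denotes $\{K_{n-m-2}+(K_2\cup H') : H' \in {\bf B}_{m,d-1}\}$, so every $H \in {\bf B}_{n,d}$ does have the claimed form for some admissible $m$ and some $H'$.
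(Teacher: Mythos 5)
Your proof is correct and follows essentially the same route as the paper: induction on $d$, with the base case handled by Lemma \ref{indH0} via the isolated vertex in the complement, and the inductive step unwinding the definition of ${\bf B}_{n,d}$ and applying Lemmas \ref{indSusp} and \ref{indDisj} in the same order. The extra edge-case checks you note (the hypothesis $d>0$ in Lemma \ref{indDisj}, the case $K_0$) are sound and only make the argument more careful than the paper's.
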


\begin{proof}
First the case $d=0.$ The graphs in ${\bf B}_{n,0}$
are defined by that their maximal degree is $n-1.$ Let $v$ be a vertex of $H$ of degree $n-1.$ The complement of $H$ is disconnected as $v$ is isolated in it. By Lemma \ref{indH0}, $\dim \tilde{H}_0(\mathrm{Ind}(H)) > 0.$

Now the case $d>0$ by induction. By definition
\[ H \in {\bf B}_{n,d} = \bigcup_{m=2d}^{n-2} K_{n-m-2}+(K_2\cup {\bf B}_{m,d-1}). \]
Pick the $m$ with $H \in K_{n-m-2}+(K_2\cup {\bf B}_{m,d-1}).$ Then $H = K_{n-m-2}+(K_2\cup H')$ for some $H'\in {\bf B}_{m,d-1}.$ By induction, $\dim \tilde{H}_{d-1}(\mathrm{Ind}(H')) > 0.$ By Lemma \ref{indSusp}, 
$\dim \tilde{H}_{d}(\mathrm{Ind}(K_2\cup H')) > 0.$ Finally, by Lemma \ref{indDisj},
\[
\dim \tilde{H}_d(\mathrm{Ind}(H)) = \dim \tilde{H}_{d}(\mathrm{Ind}(K_{n-m-2}+(K_2\cup H')))>0.
\]
\end{proof}

\begin{lem} \label{central2}
If $G$ is a graph with $\dim \tilde{H}_{d}(\mathrm{Ind}(H)) = 0$ for every induced subgraph $H$ on $n$ vertices of $G,$ then $G$ is a ${\bf B}_{n,d}$--free graph.
\end{lem}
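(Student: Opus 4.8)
The plan is to prove the contrapositive, essentially reading off Lemma~\ref{central1}. The only thing one has to check first is a structural fact: every graph in ${\bf B}_{n,d}$ has exactly $n$ vertices. For $d=0$ this is the definition. For $d>0$, a graph in $K_{n-m-2}+(K_2\cup {\bf B}_{m,d-1})$ has $(n-m-2)+2+m=n$ vertices, using induction on $d$ to know that members of ${\bf B}_{m,d-1}$ have $m$ vertices, and using that the union runs over $2d\le m\le n-2$ so that $K_{n-m-2}$ and ${\bf B}_{m,d-1}$ are both defined.

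Next I would argue by contradiction. Suppose $G$ is not ${\bf B}_{n,d}$--free. Then, by definition of being ${\bf B}_{n,d}$--free, $G$ has an induced subgraph $H$ isomorphic to some member of ${\bf B}_{n,d}$. By the vertex-count observation above, $H$ has exactly $n$ vertices. Applying Lemma~\ref{central1} to $H$ yields $\dim \tilde{H}_d(\mathrm{Ind}(H)) > 0$. On the other hand, $H$ is an induced subgraph of $G$ on $n$ vertices, so the hypothesis of the lemma gives $\dim \tilde{H}_d(\mathrm{Ind}(H)) = 0$. This contradiction shows that no such $H$ exists, i.e.\ $G$ is ${\bf B}_{n,d}$--free.

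I do not expect any real obstacle: the statement is precisely the contrapositive of Lemma~\ref{central1}, and the hypothesis is phrased exactly so that it covers all potential ``forbidden'' induced subgraphs, since every member of ${\bf B}_{n,d}$ lives on $n$ vertices. The only point worth stating carefully is that last matching of vertex counts, so that the $n$-vertex hypothesis applies to the putative copy of a graph in ${\bf B}_{n,d}$.
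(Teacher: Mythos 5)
Your proof is correct and follows essentially the same route as the paper: argue by contradiction, note that every member of ${\bf B}_{n,d}$ has exactly $n$ vertices, and invoke Lemma~\ref{central1} on the forbidden induced subgraph. The only difference is that you spell out the vertex-count induction, which the paper simply asserts.
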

\begin{proof}
Assume to the contrary that $G$ is not a ${\bf B}_{n,d}$--free graph, but\linebreak $\dim \tilde{H}_d(\mathrm{Ind}(H)) = 0$ for every induced subgraph $H$ on $n$ vertices of $G.$
Let $H$ be an induced subgraph of  $G$  on $n$ vertices that is not ${\bf B}_{n,d}$--free. All graphs in ${\bf B}_{n,d}$ are on $n$ vertices, so $H \in {\bf B}_{n,d}.$
By Lemma \ref{central1},  $\dim \tilde{H}_d(\mathrm{Ind}(H)) > 0$ and we have a contradiction.
\end{proof}

\begin{defn}
\[
\beta_{i,j}(I_G) = \sum_{W \in { V(G) \choose j } } \dim_{\mathbf k}\tilde{H}_{j-i-2}(\mathrm{Ind}(G[W]))
\]
\end{defn}

\begin{thm} \label{mainThm}
The class of graphs with $ \beta_{i,j}(I_G) = 0$ is $\chi$--bounded by $g_{j,j-i-2}(\omega).$
\end{thm}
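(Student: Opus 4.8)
The plan is to reduce Theorem \ref{mainThm} to Theorem \ref{bigthm} via the combinatorial characterization of the vanishing condition already assembled in Lemmas \ref{central1} and \ref{central2}. The key translation is between the algebraic hypothesis $\beta_{i,j}(I_G)=0$ and the purely topological statement that $\dim\tilde H_{j-i-2}(\mathrm{Ind}(G[W]))=0$ for every $j$-subset $W\subseteq V(G)$. By the definition of $\beta_{i,j}(I_G)$ as a sum of nonnegative integers $\dim_{\mathbf k}\tilde H_{j-i-2}(\mathrm{Ind}(G[W]))$ over all $W\in\binom{V(G)}{j}$, the sum vanishes if and only if every summand does. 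So the first step is to observe that $\beta_{i,j}(I_G)=0$ is equivalent to $\dim\tilde H_{j-i-2}(\mathrm{Ind}(H))=0$ for every induced subgraph $H$ of $G$ on $j$ vertices. (One should note this equivalence is over the fixed field $\mathbf k$; since we only ever conclude \emph{vanishing} of homology, the argument is field-independent in the direction we need.)

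Next I would set $n=j$ and $d=j-i-2$, and check the index hypotheses required by the combinatorial framework. We are assuming $j\geq 2i+2$ throughout the paper, which is exactly $d=j-i-2\geq 0$; and $n=j\geq 2(d+1)=2(j-i-1)$ rearranges to $j\leq 2i+2$ combined with — wait, let me recompute: $j\geq 2(j-i-1)$ gives $j\geq 2j-2i-2$, i.e.\ $2i+2\geq j$, which together with $j\geq 2i+2$ forces $j=2i+2$. That cannot be the intended range, so the correct reading is that the relevant constraint is simply $d\geq 0$ and $n=j\geq 2$; the hypothesis $n\geq 2(d+1)$ in Definition of ${\bf B}_{n,d}$ and in $g_{n,d}$ should be verified to hold or else ${\bf B}_{j,j-i-2}$ and $g_{j,j-i-2}$ are interpreted appropriately (e.g.\ ${\bf B}_{n,d}$ is empty, hence every graph is ${\bf B}_{n,d}$-free, and the bound degenerates). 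I would state this carefully: when $j<2(j-i-2+1)$, i.e.\ when $i<j-i-2$ fails in the needed direction, the set ${\bf B}_{j,j-i-2}$ contains no graphs, the vanishing condition is automatic, and $g_{j,j-i-2}$ is to be read as the trivial bound; otherwise $n=j\geq 2(d+1)$ holds and the machinery applies verbatim.

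With the index bookkeeping settled, the core argument is short. By Lemma \ref{central2} (with $n=j$, $d=j-i-2$), a graph $G$ with $\dim\tilde H_{j-i-2}(\mathrm{Ind}(H))=0$ for all induced subgraphs $H$ on $j$ vertices is ${\bf B}_{j,j-i-2}$-free. Combining with the first step, $\beta_{i,j}(I_G)=0$ implies $G$ is ${\bf B}_{j,j-i-2}$-free. Then Theorem \ref{bigthm} applied with $n=j$ and $d=j-i-2$ gives $\chi(G)\leq g_{j,j-i-2}(\omega(G))$, which is the claim.

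The main obstacle is not mathematical depth — every ingredient is already proved — but rather getting the edge cases and the index arithmetic exactly right, since the paper's hypothesis $j\geq 2i+2$ and the framework's hypothesis $n\geq 2(d+1)$ do not obviously line up. I would make sure the theorem statement and proof explicitly handle (or explicitly exclude) the degenerate regime, perhaps by adding the standing assumption $j\geq 2i+4$ (so that $d=j-i-2\geq 2$ or at least the set ${\bf B}_{j,j-i-2}$ is genuinely defined) wherever the nontrivial content lives, and remarking that for $j\in\{2i+2,2i+3\}$ the statement holds trivially because the relevant syzygies or the relevant homology groups are forced to behave in a controlled way. A secondary point worth a sentence is confirming that Lemma \ref{central2}'s hypothesis quantifies over induced subgraphs \emph{on exactly $n$ vertices}, which matches the index set $\binom{V(G)}{j}$ in the definition of $\beta_{i,j}$ precisely, so no subgraphs of other sizes enter and the reduction is clean.
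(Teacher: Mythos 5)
Your core argument is exactly the paper's proof: by Hochster-type positivity, $\beta_{i,j}(I_G)=0$ forces every summand $\dim\tilde H_{j-i-2}(\mathrm{Ind}(G[W]))$ to vanish, Lemma \ref{central2} then makes $G$ a ${\bf B}_{j,j-i-2}$--free graph, and Theorem \ref{bigthm} gives the bound $g_{j,j-i-2}(\omega)$. That part is correct and complete.

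Your index discussion, however, is resolved the wrong way. With $n=j$ and $d=j-i-2$, the framework's hypotheses $d\geq 0$ and $n\geq 2(d+1)$ unwind to $i+2\leq j\leq 2i+2$, which is precisely the standard nontrivial window for Betti numbers of an ideal generated in degree two (the $k$-th syzygy module lives in degrees between $k+2$ and $2k+2$); for $j>2i+2$ one has $\beta_{i,j}(I_G)=0$ for \emph{every} graph, so no $\chi$--bound could hold there and the theorem is not asserting one. The paper's standing convention should accordingly be read as $j\leq 2i+2$, and then the hypotheses of Theorem \ref{bigthm} are satisfied verbatim with no degenerate regime to patch. Your proposed remedies go in the wrong direction: adding the assumption $j\geq 2i+4$ would empty the theorem of all content, and reading $g_{j,j-i-2}$ as ``the trivial bound'' when ${\bf B}_{j,j-i-2}$ is empty cannot work, since an empty forbidden family yields no bound at all. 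None of this affects the validity of your main reduction, but the sentence you would add to the paper should assert $i+2\leq j\leq 2i+2$, not exclude part of that range.
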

\begin{proof}
If $ \beta_{i,j}(I_G) = 0 $ then 
$ \dim_{\mathbf k}\tilde{H}_{j-i-2}(\mathrm{Ind}(G[W])) = 0 $ for all $W \in { V(G) \choose j }.$
So, by Lemma \ref{central2}, the graph $G$ is a ${\bf B}_{j,j-i-2}$--free graph.
By Theorem \ref{bigthm}, the class of  ${\bf B}_{j,j-i-2}$--free graphs is $\chi$--bounded by $g_{j,j-i-2}(\omega).$
\end{proof}

\begin{cor} \label{mainCor1} The class of graphs with $ \beta_{i,j}(I_G) = 0$ is $\chi$--bounded by
\[ {\omega-1+2(j-i-2) \choose 2(j-i-2)} +  {j-2 \choose 2(j-i-2)+1}. \] 
\end{cor}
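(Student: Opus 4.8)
The plan is to read this statement off directly from the two results that immediately precede it. Theorem~\ref{mainThm} already says that the class of graphs with $\beta_{i,j}(I_G)=0$ is $\chi$--bounded by the recursively defined function $g_{j,\,j-i-2}(\omega)$, and Proposition~\ref{prepp} already says that $g_{n,d}(\omega)\leq{\omega-1+2d \choose 2d}+{n-2 \choose 2d+1}$ for every $\omega\geq 1$. So the proof amounts to chaining these two estimates: specializing Proposition~\ref{prepp} at $n=j$ and $d=j-i-2$ gives
\[
\chi(G)\ \leq\ g_{j,\,j-i-2}\bigl(\omega(G)\bigr)\ \leq\ {\omega(G)-1+2(j-i-2) \choose 2(j-i-2)}+{j-2 \choose 2(j-i-2)+1},
\]
which is exactly the claimed bounding function.

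Concretely I would carry this out in three short steps. First, invoke Theorem~\ref{mainThm} to pass from the vanishing hypothesis $\beta_{i,j}(I_G)=0$ to the bound $\chi(G)\leq g_{j,\,j-i-2}(\omega(G))$. Second, check that the index pair $(n,d)=(j,\,j-i-2)$ is admissible, i.e.\ that $d\geq 0$ and $n\geq 2(d+1)$, so that both $g_{j,\,j-i-2}$ and Proposition~\ref{prepp} are applicable; this holds throughout the range of $(i,j)$ for which the statement is intended. Third, substitute $n=j$ and $d=j-i-2$ into the inequality of Proposition~\ref{prepp} and rewrite $2d$ as $2(j-i-2)$. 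Since being $\chi$--bounded by a larger function is a weaker conclusion, this yields the corollary.

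I do not expect any genuine obstacle: all the real content has already been established --- the chromatic bound for ${\bf B}_{n,d}$--free graphs in Theorem~\ref{bigthm}, the homological dictionary of Lemmas~\ref{central1} and~\ref{central2} feeding into Theorem~\ref{mainThm}, and the telescoping binomial identities (Lemmas~\ref{doubleBumpLemma} and~\ref{singleBumpLemma}) behind Proposition~\ref{prepp}. The only point worth a sentence of care is the bookkeeping of the admissible ranges for the indices $n,d$ so that the specialization $n=j,\ d=j-i-2$ is legitimate; everything past that is a one-line substitution. One could additionally remark that the relevant class is hereditary --- the defining sum for $\beta_{i,j}(I_H)$ with $H$ an induced subgraph of $G$ is a subsum of that for $\beta_{i,j}(I_G)$, so $\beta_{i,j}(I_G)=0$ forces $\beta_{i,j}(I_H)=0$ --- which is what makes the phrase ``$\chi$--bounded'' meaningful here, but this is already built into Theorem~\ref{mainThm}.
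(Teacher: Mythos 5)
Your proposal is correct and is exactly the paper's own argument: the proof given there is literally ``Use Theorem~\ref{mainThm} and Proposition~\ref{prepp}'', i.e.\ specialize the bound $g_{n,d}(\omega)\leq{\omega-1+2d\choose 2d}+{n-2\choose 2d+1}$ at $n=j$, $d=j-i-2$. Your extra remarks on admissibility of the indices and on heredity of the class are sound but already implicit in the cited results.
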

\begin{proof} Use Theorem \ref{mainThm} and  Proposition \ref{prepp}. 
\end{proof}

\begin{cor} \label{mainCor2} If $G$ is a triangle-free graph with $\beta_{i,j}(I_G) = 0$ then $\chi(G) \leq j-1.$
\end{cor}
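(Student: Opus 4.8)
The plan is to specialize Theorem~\ref{mainThm} to the triangle-free situation and then evaluate the resulting bounding function at the only arguments that can actually occur. If $G$ is triangle-free with $\beta_{i,j}(I_G)=0$, then Theorem~\ref{mainThm} gives $\chi(G)\le g_{j,\,j-i-2}(\omega(G))$. Triangle-freeness forces $\omega(G)\le 2$, so the whole statement reduces to bounding $g_{j,\,j-i-2}$ at the arguments $1$ and $2$.

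For $\omega(G)=2$ I would apply Proposition~\ref{gTriangleFree} with $n=j$ and $d=j-i-2$: this gives $g_{j,\,j-i-2}(2)=j-1$, hence $\chi(G)\le j-1$. For $\omega(G)\le 1$ the graph $G$ is edgeless, so $\chi(G)\le 1$, and since the index $d=j-i-2$ of $g$ is non-negative we have $j\ge i+2\ge 2$, so $\chi(G)\le 1\le j-1$ as well. One can also avoid the case distinction entirely by noting $g_{j,\,j-i-2}(1)=1$ directly from the definition of $g$, whence $\chi(G)\le\max\{g_{j,\,j-i-2}(1),\,g_{j,\,j-i-2}(2)\}=\max\{1,\,j-1\}=j-1$ for either possible value of $\omega(G)$.

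I do not anticipate a genuine obstacle here: the corollary is little more than a reading of Theorem~\ref{mainThm} at $\omega\le 2$, made sharp by the clean closed form in Proposition~\ref{gTriangleFree}. The only points that warrant a line of care are verifying that the pair $(j,\,j-i-2)$ lies in the common domain of Theorem~\ref{mainThm} and Proposition~\ref{gTriangleFree}, i.e.\ that $n=j\ge 2(d+1)$ for $d=j-i-2$, which is automatic in the range of Betti numbers under consideration, and disposing of the degenerate edgeless case.
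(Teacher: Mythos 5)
Your proposal is correct and follows exactly the paper's route: apply Theorem~\ref{mainThm} to reduce to bounding $g_{j,\,j-i-2}(\omega)$ and then invoke Proposition~\ref{gTriangleFree} at $\omega=2$, with the edgeless case being trivial. The extra care you take with the domain condition $n\ge 2(d+1)$ and with $\omega\le 1$ is just a more explicit write-up of the same two-line argument the paper gives.
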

\begin{proof} Use Theorem \ref{mainThm} and  Proposition \ref{gTriangleFree}. 
\end{proof}

\section{Asymptotically $\chi$-bounded graphs}

Reed and Yuditsky \cite{RY25} defined an asymptotic version:
\begin{defn}
An hereditary class $\mathcal{C}$ of graphs is \emph{asymptotically $\chi$-bounded by~$f$} if
\[
\lim_{n \rightarrow \infty} {\mathbf P}  [ \chi(G)\leq f(\omega(G))  \mid \textrm{$G$ is an order $n$ graph in the class $\mathcal{C}$}] = 1.
\]
\end{defn}
Towards the Gyárfás-Sumner conjecture, they proved that for any tree $T,$ the class of $T$--free graphs is asymptotically $\chi$-bounded by $\omega.$

\begin{defn}
A betti number $\beta_{i,j}(I_G)$ is \emph{parabolic} if $(j-i)^2 \geq j+i+2.$
\end{defn}

This  theorem was proved by Engström and Orlich \cite{EO23}.

\begin{thm}\label{eo} Let $\beta_{i,j}(I_G)$ be a parabolic betti number with $j\geq i+3.$ Then, for almost every graph with $ \beta_{i,j}(I_G) = 0$ we have:
\begin{itemize}
\item[1.] $\beta_{k,l}(I_G)=0$ whenever $l-k=j-i,$ and
\item[2.] parabolic $\beta_{k,l}(I_G)>0$ whenever  $l-k<j-i.$
\end{itemize}
\end{thm}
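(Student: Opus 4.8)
The plan is to pass everything through the forbidden--induced--subgraph dictionary of Lemmas \ref{central1} and \ref{central2}, namely $\beta_{k,l}(I_G)=0 \iff G$ is ${\bf B}_{l,\,l-k-2}$--free. Writing $d=j-i-2$, the hypothesis $\beta_{i,j}(I_G)=0$ says precisely that $G$ is ${\bf B}_{j,d}$--free; the diagonal $l-k=j-i$ is where $l-k-2=d$ is unchanged, while $l-k<j-i$ is the locus of strictly smaller homological degree $d'=l-k-2<d$. (The parabolic condition, which by a direct rearrangement is equivalent to $i\le\binom{j-i}{2}-1$, places $(i,j)$, and likewise each target $(k,l)$, in the regime where the relevant degree-$d$ homology is robustly present rather than marginal.) Two purely structural facts about the families ${\bf B}_{n,d}$ will carry most of the weight. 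The first is a \emph{trimming lemma}: for $2(d+1)\le j<l$, every $H\in{\bf B}_{l,d}$ has an induced subgraph lying in ${\bf B}_{j,d}$. I would prove this by induction on $d$ from the recursion ${\bf B}_{n,d}=\bigcup_m K_{n-m-2}+(K_2\cup{\bf B}_{m,d-1})$: writing $H=K_{c}+(K_2\cup H')$ with $H'\in{\bf B}_{m,d-1}$, one keeps the $K_2$, retains as many clique vertices as possible, and shrinks $H'$ to the needed size through the inductive hypothesis, so that the reassembled graph is an induced subgraph of $H$ belonging to ${\bf B}_{j,d}$. The second fact is the \emph{join--extension} $K_m+{\bf B}_{l,d}\subseteq{\bf B}_{l+m,d}$, immediate from the same recursion by absorbing $K_m$ into the leading clique.

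These two facts already settle the upper half of Part~1 \emph{deterministically}. If $G$ is ${\bf B}_{j,d}$--free and $l>j$ lies on the diagonal, then an induced $H\in{\bf B}_{l,d}$ would, by the trimming lemma, contain an induced member of ${\bf B}_{j,d}$, a contradiction; hence $G$ is ${\bf B}_{l,d}$--free and $\beta_{k,l}(I_G)=0$. The remaining claims are genuinely probabilistic, because they fail on special ${\bf B}_{j,d}$--free graphs. For example a complete multipartite graph has $\tilde H_{d'}(\mathrm{Ind}(\,\cdot\,))=0$ in every degree $d'\ge 1$ on every induced subgraph, so it is ${\bf B}_{j,d}$--free yet contains no witness in any ${\bf B}_{l,d'}$ with $d'\ge 1$; such a graph violates Part~2 and also the lower half of Part~1. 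This is exactly why the theorem is stated for \emph{almost every} graph, and it tells me the heart of the proof must be a typical--structure statement that excludes these homologically thin exceptions.

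For the probabilistic core the plan is to invoke the Alekseev--Bollob\'as--Thomason enumeration together with its Pr\"omel--Steger and Balogh--Bollob\'as--Simonovits stability refinement, applied to the hereditary class of ${\bf B}_{j,d}$--free graphs: almost every such graph on $n$ vertices is a blow--up of a bounded template, with a bounded number of large parts plus an $o(n^2)$ edge perturbation. Granting this, the two outstanding claims follow from the structural facts. For the lower half of Part~1 ($l<j$): inside the blow--up the large parts supply, for any fixed induced $H\in{\bf B}_{l,d}$, a clique $K_{j-l}$ completely joined to $V(H)$, whence $K_{j-l}+H\in{\bf B}_{j,d}$ is induced in $G$ by join--extension, contradicting ${\bf B}_{j,d}$--freeness; so typically no induced ${\bf B}_{l,d}$ exists and $\beta_{k,l}(I_G)=0$. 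For Part~2: a single small witness of degree $d'$, namely $(d'+1)K_2\in{\bf B}_{2(d'+1),d'}$, can be planted across two parts of the blow--up and inflated by join--extension to an induced member of ${\bf B}_{l,d'}$; here the parabolicity of $(k,l)$ is what guarantees the template is rich enough (its parts numerous and large enough) to host this witness, giving $\beta_{k,l}(I_G)>0$.

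The main obstacle is the probabilistic core itself: pinning down the coloring number and the typical template of the ${\bf B}_{j,d}$--free class, and showing that the parabolic inequality $(j-i)^2\ge j+i+2$ is precisely the condition under which the typical blow--up simultaneously \emph{contains} the lower--degree witnesses demanded by Part~2 and \emph{excludes} the smaller same--degree witnesses forbidden by Part~1. The delicate point is controlling the $o(n^2)$ perturbation in both directions at once --- verifying that it neither destroys a required degree-$d'$ witness nor manufactures a forbidden degree-$d$ configuration that the clean template already avoids --- and this balancing act is exactly where the parabolic threshold must enter the counting estimates.
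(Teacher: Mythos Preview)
The paper does not prove this theorem at all: it is quoted verbatim from Engstr\"om and Orlich \cite{EO23} and used as a black box to derive Corollary~\ref{asymBound}. So there is no ``paper's own proof'' to compare against; what you have written is an independent attempt at a result the paper simply imports.

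That said, your plan contains a genuine error at its foundation. You assert the equivalence $\beta_{k,l}(I_G)=0 \iff G$ is ${\bf B}_{l,\,l-k-2}$--free, attributing it to Lemmas~\ref{central1} and~\ref{central2}. Both of those lemmas only give the forward implication: if the Betti number vanishes then $G$ avoids ${\bf B}_{l,d}$. The converse is false, because ${\bf B}_{n,d}$ is a strict subset of the graphs on $n$ vertices with $\tilde H_d(\mathrm{Ind}(\cdot))\neq 0$. Already for $d=0$: the cycle $C_4$ has disconnected complement $2K_2$, so $\tilde H_0(\mathrm{Ind}(C_4))>0$, yet every vertex of $C_4$ has degree $2<3$ and hence $C_4\notin{\bf B}_{4,0}$. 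Consequently a graph can be ${\bf B}_{l,d}$--free while still having $\beta_{k,l}(I_G)>0$.

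This breaks exactly the part you call ``deterministic.'' Your trimming lemma (even if correct) only shows that ${\bf B}_{j,d}$--free implies ${\bf B}_{l,d}$--free for $l>j$; it does not, and cannot, force $\beta_{k,l}(I_G)=0$. The same gap recurs in your probabilistic argument for the lower half of Part~1: excluding induced members of ${\bf B}_{l,d}$ is not the same as excluding all $l$-vertex induced subgraphs with degree-$d$ homology. Your approach to Part~2 is on firmer ground---finding a single witness in ${\bf B}_{l,d'}$ does certify $\beta_{k,l}(I_G)>0$ via Lemma~\ref{central1}---but for Part~1 you would need to control \emph{all} $l$-vertex subgraphs, not just those in the restricted family ${\bf B}_{l,d}$, and nothing in your outline addresses that.
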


\begin{cor} \label{asymBound} If  $(j-i)^2 \geq j+i+2$ and $j-i\geq 3,$ then the hereditary class of graphs with $ \beta_{i,j}(I_G) = 0$ is 
asymptotically $\chi$-bounded by 
\[ {\omega-1+2(j-i-2) \choose 2(j-i-2)}. \]
\end{cor}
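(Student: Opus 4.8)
The plan is to combine Theorem~\ref{eo} with the colorability result already established for graphs with a single vanishing syzygy. Fix a parabolic pair $(i,j)$ with $d := j-i-2 \geq 1$, and let $\mathcal{C}$ be the hereditary class of graphs with $\beta_{i,j}(I_G)=0$. The target bound is $\binom{\omega-1+2d}{2d}$, which is exactly what Corollary~\ref{mainCor1} gives \emph{minus} the additive term $\binom{j-2}{2d+1}$; by Proposition~\ref{prepp} that term is precisely the slack in the bound $g_{j,d}(\omega)\leq\binom{\omega-1+2d}{2d}+\binom{j-2}{2d+1}$, and the inequality $g_{j,d}(\omega)\leq\binom{\omega-1+2d}{2d}$ holds whenever the sharpness condition of Proposition~\ref{prepp} fails, i.e.\ whenever $n=j$ is \emph{not} strictly larger than $\omega+2d+1$. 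So the whole point is that for almost every graph in $\mathcal{C}$, some additional vanishing syzygy lets us run Theorem~\ref{mainThm} with a smaller effective value of $j$ (equivalently, a larger $d$ relative to $j$), pushing us out of the sharp regime.

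Concretely, I would argue as follows. First, since $j-i\geq 3$ and $(i,j)$ is parabolic, Theorem~\ref{eo} applies: for almost every $G\in\mathcal{C}$ we have $\beta_{k,l}(I_G)=0$ for \emph{every} pair with $l-k=j-i$. In particular, taking $k=i-t$, $l=j-t$ for a suitable $t\geq 0$, the graph $G$ has $\beta_{i-t,\,j-t}(I_G)=0$ as long as $k=i-t\geq 0$, and note $l-k=j-i$ is unchanged so $d'=(j-t)-(i-t)-2 = d$ stays the same while the ``$n$'' parameter drops from $j$ to $j-t$. By Theorem~\ref{mainThm}, such a $G$ is $\chi$-bounded by $g_{j-t,\,d}(\omega)$. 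Now choose $t$ as large as allowed, i.e.\ $t=i$ (so $k=0$, $l=j-i$); then $G$ is $\chi$-bounded by $g_{j-i,\,d}(\omega) = g_{d+2,\,d}(\omega)$. Here the ``$n$'' parameter equals $d+2 = j-i$, which satisfies $n\geq 2(d+1)$ iff $j-i\geq 2d+2$ iff $j-i\geq 2(j-i-2)+2$ iff $j-i\leq 2$ — which fails. So $t=i$ overshoots, and I instead need the largest $t$ with $j-t\geq 2(d+1)$, i.e.\ $j-t\geq 2(j-i-1)$. The cleanest choice is to push $n=j-t$ down to exactly $\omega+2d+1$ or below: by Proposition~\ref{prepp}, once $n\leq\omega+2d+1$ the inequality $g_{n,d}(\omega)\leq\binom{\omega-1+2d}{2d}$ is no longer sharp but still valid, and moreover $g_{n,d}$ is non-decreasing in $n$, so $g_{n,d}(\omega)\leq g_{\omega+2d+1,\,d}(\omega)\leq\binom{\omega-1+2d}{2d}$.

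The subtle point, and the one I would handle carefully, is the interaction between $\omega$ and the index $n=j-t$: $\omega=\omega(G)$ is itself a property of the random graph, not a fixed constant, so I cannot literally ``choose $t$ depending on $\omega$.'' Instead I would fix $t$ to be the largest integer with $j-t\geq 2(d+1)$ (equivalently $j-t = 2(j-i-1)$ when parity allows, else one more), apply Theorem~\ref{eo} to conclude $\beta_{i-t,j-t}(I_G)=0$ for almost every $G\in\mathcal{C}$ (legitimate since $i-t\geq 0$ needs checking: $i-t\geq 0$ iff $t\leq i$ iff $j - 2(d+1)\leq i$ iff $j-i\leq 2d+2$, which again fails for $d\geq 1$...). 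This tension is the real obstacle: pushing $n$ all the way down to $2(d+1)$ may require $t>i$, which is not available. The resolution is that we only need $n\leq\omega+2d+1$, not $n=2(d+1)$, and for the \emph{generic} graph in $\mathcal{C}$ one expects $\omega$ to be large — but ``almost every graph in $\mathcal{C}$ has large $\omega$'' is itself a statement requiring justification. I would therefore structure the proof as: (a) by Theorem~\ref{eo}, a.e.\ $G\in\mathcal{C}$ has $\beta_{k,l}(I_G)=0$ for all $l-k=j-i$; (b) a.e.\ such $G$ has $\omega(G)$ at least some growing function of $n$ (this follows from the second bullet of Theorem~\ref{eo}, since small $\omega$ forces many vanishing parabolic syzygies with $l-k<j-i$, contradicting bullet 2, or alternatively from the structure theory in \cite{EO23}); (c) for such $G$, pick $t$ with $i-t\geq 0$ and $j-t$ as small as possible, apply Theorem~\ref{mainThm} to get $\chi(G)\leq g_{j-t,d}(\omega)$, and finish with monotonicity of $g$ in its first argument plus Proposition~\ref{prepp}. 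The main obstacle is thus not algebraic but probabilistic: controlling $\omega(G)$ for the generic member of $\mathcal{C}$, which I expect to extract from the finer structural conclusions of \cite{EO23} underlying Theorem~\ref{eo}.
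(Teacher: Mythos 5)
Your overall plan --- transfer the vanishing of $\beta_{i,j}$ to another syzygy via Theorem~\ref{eo}(1) and then invoke Theorem~\ref{mainThm} with Proposition~\ref{prepp} --- is the right one, but you pick the wrong target syzygy, and the workaround you build on top of that choice rests on a misreading of Proposition~\ref{prepp}. In the paper's usage ``sharp'' means the inequality is \emph{strict}; when the condition $n>\omega+2d+1$ fails one gets \emph{equality} $g_{n,d}(\omega)=\binom{\omega-1+2d}{2d}+\binom{n-2}{2d+1}$, not the stronger bound $g_{n,d}(\omega)\le\binom{\omega-1+2d}{2d}$. Concretely, $g_{5,1}(2)=\binom{3}{2}+1=4>3=\binom{3}{2}$, which falsifies your claims that $g_{n,d}(\omega)\le\binom{\omega-1+2d}{2d}$ once $n\le\omega+2d+1$ and that $g_{\omega+2d+1,d}(\omega)\le\binom{\omega-1+2d}{2d}$. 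The additive term $\binom{n-2}{2d+1}$ disappears only when $n-2<2d+1$, i.e.\ $n\le 2(d+1)$; making $\omega$ large never removes it. Consequently your fallback steps (b) and (c) cannot work even if the probabilistic claim about $\omega(G)$ being large were established: for large $\omega$ the bound is \emph{exactly} $\binom{\omega-1+2d}{2d}+\binom{j-2}{2d+1}$.

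The missing idea is that the pair $(k,l)$ with $l-k=j-i$ need not be obtained by \emph{decreasing} both indices. The paper takes $k=j-i-2$ and $l=2j-2i-2$ (a shift \emph{upward} by $j-2i-2\ge 0$), which lands exactly on the diagonal $l=2k+2$, i.e.\ the minimal case $n=2(d+1)$ of ${\bf B}_{n,d}$ (equivalently, $(j-i-1)K_2$--freeness). There the additive term in Corollary~\ref{mainCor1} is $\binom{2j-2i-4}{2j-2i-3}=0$ identically, and the stated bound follows immediately from Theorem~\ref{eo}(1) plus Corollary~\ref{mainCor1}, with no control on $\omega(G)$ needed. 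Your downward shift $(i-t,j-t)$ is blocked precisely because reaching $n=2(d+1)$ would require $t>i$ (and $\beta_{0,l}$ for $l>2$ is vacuously zero anyway), which is the tension you correctly noticed but resolved in the wrong direction.
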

\begin{proof}
Consider $\beta_{j-i-2,2j-2i-2}(I_G).$ We have that $(2j-2i-2)-(j-i-2)=j-i,$ so by Theorem \ref{eo} above $\beta_{j-i-2,2j-2i-2}(I_G)=0$ for almost all $G.$
By Corollary  \ref{mainCor1},
\[
\begin{array}{rcl}
\chi(G) & \leq &  \displaystyle {\omega-1+2((2j-2i-2)-(j-i-2)-2) \choose 2((2j-2i-2)-(j-i-2)-2)} +  {2j-2i-2-2 \choose 2((2j-2i-2)-(j-i-2)-2)+1} \\
& =  & \displaystyle  {\omega-1+2(j-i-2) \choose 2(j-i-2)} +  {2j-2i-2-2 \choose 2(j-i-2)+1} \\
& =  & \displaystyle  {\omega-1+2(j-i-2) \choose 2(j-i-2)} +  {2j-2i-4 \choose 2j-2i-3} \\
& =  & \displaystyle  {\omega-1+2(j-i-2) \choose 2(j-i-2)}. \\
\end{array}
\]
\end{proof}
\section{Complexity theoretic results}

In this section the time complexity for algorithms and subroutines is many times essentially $O(m)$ for graphs with $n$ vertices and $m$ edges. But to be more precise, it is actually $O(m+n)$ unless
certain corner cases are avoided. Unfortunately those corner cases are common in subroutines. Furthermore, working with $O()$--notation turns out to be unsuitable in recursions due to that statements like $O(1)+O(2)+\cdots+O(t)$ does not make sense. Usually we will prove that algorithms run faster than $C\varphi(G)(m+n)$ where $C$ is a positive constant, $G$ is a non-empty graph and $\varphi(G)$ is a positive graph invariant.

All graph coloring in this text so far builds on repeated applications of Lemma~\ref{K2extend}. The main complexity problem is that it requires one to find a maximum clique, and that is notoriously slow.
The first goal of this section is to replace the maximum clique in the proof of Lemma~\ref{K2extend} by something weaker that is faster to find. A clique on $K=\{1,2,\ldots, k\}$ in $G$ is selected with 
$k=\omega(G)$ to achieve the following two facts in the proof:
\begin{itemize}
\item[(1)] There is no $v \in V(G)\setminus K$ adjacent to all vertices of $K.$
\item[(2)] There is no $u\in K$ together with two adjacent vertices $v ,w \in V(G)\setminus K$ that are adjacent to all vertices of $K\setminus u.$
\end{itemize}
A clique only satisfying condition (1) is maximal.

Andrade, Resende and Werneck  \cite{arw12}  found a fast heuristic to find large independent sets by local improvements. As mentioned in their paper, it is also valid to find cliques, which is the relevant context for this paper. Transforming a clique by deleting $k-1$ vertices and then adding $k$ vertices to get a new larger clique is called a \emph{$k$--improvement}. If no $k$--improvement is possible, the clique is \emph{$k$--maximal}. To be 1-maximal is equivalent to ordinary maximal. It is easy to prove that:
\begin{lem}\label{1-imp}
There is a positive constant $C_{\mathrm{1-imp}}$ such that given any clique in a non-empty graph on $n$ vertices and $m$ edges one can find an 1-improvement (or prove that none exist) in $C_{\mathrm{1-imp}}(m+n)$ time.
\end{lem}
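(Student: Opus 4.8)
The plan is to give a direct, constructive argument that finds a 1-improvement of a given clique $Q$ (or certifies none exists) in linear time. The key observation is that a 1-improvement means deleting one vertex $u \in Q$ and adding two vertices $v, w \notin Q$ such that $Q \setminus \{u\} \cup \{v,w\}$ is again a clique; this requires $vw \in E(G)$, and both $v$ and $w$ adjacent to every vertex of $Q \setminus \{u\}$. So first I would identify, for each vertex $x \notin Q$, the number $a(x)$ of vertices of $Q$ that $x$ is \emph{not} adjacent to, and record which vertex that is when $a(x)=1$. This can be done by iterating once over the adjacency lists of the vertices of $Q$ (equivalently, once over all edges incident to $Q$), incrementing counters; vertices never seen this way have $a(x) = |Q|$, which we detect by initializing all outside vertices to $|Q|$ and decrementing. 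The total work here is $O(m+n)$.

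Next, call an outside vertex $x$ a \emph{candidate for $u$} if $a(x) = 0$ (adjacent to all of $Q$, hence also a candidate for every $u \in Q$) or if $a(x) = 1$ with the unique non-neighbour being exactly $u$. A 1-improvement exists if and only if there is some $u \in Q$ and an edge $vw$ of $G$ with both $v$ and $w$ candidates for $u$. To search for this without quadratic blow-up, I would bucket the candidates: maintain for each $u \in Q$ a list $L_u$ of its candidates, but note that the $a(x)=0$ vertices belong to all buckets, so I handle them separately. Concretely: (i) if there are at least two vertices $v,w$ with $a(v)=a(w)=0$ and $vw \in E(G)$, or one vertex $v$ with $a(v)=0$ that has an edge to some $w$ with $a(w)=1$, then a 1-improvement is found; (ii) otherwise, a 1-improvement must use two vertices $v,w$ each with $a(\cdot)=1$ and the \emph{same} forbidden vertex $u$, together with $vw \in E(G)$ — so for each $u$ I scan the edges inside $L_u$ (restricted to the $a=1$ vertices). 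The subtlety is that $\sum_u |L_u|$ over the $a=1$ vertices is at most $n$ (each such vertex lies in exactly one bucket), and when scanning for an edge inside $L_u$ I only traverse adjacency lists of vertices in $L_u$, charging each traversed edge to its endpoint in $L_u$; since the buckets for $a=1$ vertices are disjoint, the total is $O(m+n)$. For the $a=0$ vertices, there are at most... well, possibly many, but case (i) only needs: does the subgraph induced on $\{x : a(x)=0\}$ have an edge, and does any $a=0$ vertex have a neighbour among the $a=1$ vertices — both answerable in $O(m+n)$ by one scan. Once a valid triple $(u,v,w)$ is located, output it; the constant $C_{\mathrm{1-imp}}$ absorbs the fixed number of linear passes.

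The main obstacle I anticipate is the bookkeeping to guarantee that the "search for an edge inside bucket $L_u$" step is genuinely linear and not $O(|L_u|^2)$ summed over $u$ — the resolution is that we do not test all pairs in $L_u$, but rather mark the vertices of $L_u$ and then walk the adjacency list of each $v \in L_u$ looking for a marked neighbour, stopping at the first hit; the cost is bounded by $\sum_{v \in L_u} \deg(v)$, and summing over all $u$ the disjointness of the $a=1$ buckets gives $O(m+n)$ total. A secondary nuisance is the corner cases flagged at the start of Section 6 (empty graph, isolated vertices, $|Q| = 1$), but these are handled by the $O(m+n)$ rather than $O(m)$ bound and by noting that for $|Q|=1$ a 1-improvement is just finding an edge $vw$ with $v,w \notin Q$, a trivial scan. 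I would phrase the final write-up as: initialize counters ($O(n)$), one pass over edges incident to $Q$ to compute $a(\cdot)$ and bucket ($O(m+n)$), a constant number of further linear passes to test cases (i) and (ii) ($O(m+n)$), hence total time at most $C_{\mathrm{1-imp}}(m+n)$ for a suitable absolute constant.
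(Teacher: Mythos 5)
There is a genuine gap, but it is one of misreading rather than of technique: you have proved the wrong lemma. The paper defines a $k$--improvement as ``deleting $k-1$ vertices and then adding $k$ vertices to get a new larger clique,'' so a \emph{1-improvement} deletes $1-1=0$ vertices and adds one vertex; it is simply the extension of the given clique $Q$ by a single vertex adjacent to all of $Q$. This is confirmed by the paper's remark immediately afterwards that ``to be 1-maximal is equivalent to ordinary maximal,'' which would be false under your reading. The operation you analyse --- delete one vertex $u\in Q$ and add two adjacent vertices $v,w$ each complete to $Q\setminus\{u\}$ --- is the \emph{2-improvement}, i.e.\ the content of Lemma~\ref{2-imp}, which the paper attributes to Andrade, Resende and Werneck and does not reprove. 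Your bucketing-by-forbidden-vertex argument with the $a(x)\in\{0,1\}$ classification is essentially their argument and looks sound for that lemma, but it does not establish Lemma~\ref{1-imp}: knowing how to find a 2-improvement of a 1-maximal clique gives no procedure for finding a 1-improvement in the first place.

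The intended proof of Lemma~\ref{1-imp} is the trivial fragment of what you wrote: in one pass over the adjacency lists of the vertices of $Q$ (cost $O(m+n)$, including initialization of counters over all $n$ vertices), compute for each $x\in V(G)\setminus Q$ the number of its neighbours in $Q$; if some $x$ attains $|Q|$, then $Q\cup\{x\}$ is a larger clique and is the desired 1-improvement, and otherwise no 1-improvement exists, i.e.\ $Q$ is maximal. (For the empty clique, any vertex is a 1-improvement, handled by the $+n$ term.) I would recommend keeping your counting pass, deleting the entire candidate-pair and bucket machinery from this proof, and, if you wish, recording that machinery separately as a proof sketch of Lemma~\ref{2-imp} under the hypothesis that $Q$ is already 1-maximal.
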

The heuristic by Andrade et al is fast in practice and they also proved that:
\begin{lem}\label{2-imp}
There is a positive constant $C_{\mathrm{2-imp}}$ such that given any 1-maximal clique in a non-empty graph on $n$ vertices and $m$ edges one can find an 2-improvement (or prove that none exist) in $C_{\mathrm{2-imp}}(m+n)$ time.
\end{lem}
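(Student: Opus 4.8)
The plan is to reduce the search for a 2-improvement to a single edge-criterion that can be tested by two passes over the edge list. Write the given 1-maximal clique as $C=\{1,2,\ldots,k\}$. A 2-improvement deletes one vertex $u\in C$ and inserts two vertices $v,w\in V(G)\setminus C$ with $vw\in E(G)$ so that $(C\setminus\{u\})\cup\{v,w\}$ is again a clique; equivalently, $v$ and $w$ are each adjacent to every vertex of $C\setminus\{u\}$ and to one another. The key observation is that since $C$ is 1-maximal, no vertex outside $C$ is adjacent to all of $C$, so any eligible $v$ misses at least one vertex of $C$; being adjacent to all of $C\setminus\{u\}$, the vertex it misses must be exactly $u$. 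Hence each of $v,w$ has precisely one non-neighbor in $C$, namely the common removed vertex $u$.

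This suggests bucketing. For $u\in C$ let $A_u$ be the set of vertices of $V(G)\setminus C$ whose unique non-neighbor in $C$ is $u$; these sets are pairwise disjoint. I claim a 2-improvement exists if and only if some $A_u$ contains an edge of $G$. The backward direction takes $v,w\in A_u$ with $vw\in E(G)$ and forms the larger clique $(C\setminus\{u\})\cup\{v,w\}$, and the forward direction is precisely the maximality argument of the previous paragraph applied to both inserted vertices. Moreover an edge spanning two different buckets $A_u,A_{u'}$ is useless, since $v$ would fail to be adjacent to $u'\in C\setminus\{u\}$; so the test must insist on a single bucket.

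First I would compute, in one sweep over the edge list, for every $v\notin C$ both the number $d_C(v)$ of its neighbors in $C$ and the sum (or bitwise XOR) $s_v$ of the labels of those neighbors, which costs $O(m)$. A vertex $v$ belongs to some bucket exactly when $d_C(v)=k-1$, and in that case its unique non-neighbor is recovered in $O(1)$ as $u=\bigl(\sum_{c\in C}c\bigr)-s_v$ (or the XOR analogue), since exactly one label is absent from the partial sum. Assigning each such $v$ its bucket index $u$ takes $O(n)$. Then I would run once more over the edge list: as soon as an edge $vw$ has both endpoints bucketed with the same index $u$, output the 2-improvement removing $u$ and inserting $v,w$; if the sweep completes with no such edge, report that none exists. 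The two sweeps are $O(m)$ each, giving total time $O(m+n)$ and hence the constant $C_{\mathrm{2\text{-}imp}}$.

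The main obstacle is not the running time but the correctness of the reduction, and this is exactly where the 1-maximality hypothesis is indispensable: without it an inserted vertex could be adjacent to \emph{all} of $C$, so it would land in no bucket and the clean characterization ``some $A_u$ contains an edge'' would undercount. The remaining care is bookkeeping — choosing vertex labels so that the missing-label recovery is unambiguous and $O(1)$ per vertex (summation in the word-RAM model, or XOR to sidestep any overflow concern), and checking the boundary case $k=1$, where $A_u$ is the set of non-neighbors of the single clique vertex and the test degenerates to detecting any edge among those vertices.
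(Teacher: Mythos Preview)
Your argument is correct. The reduction to the bucket criterion is exactly right: $1$-maximality forces any inserted vertex to miss precisely one vertex of $C$, so a $2$-improvement exists if and only if some bucket $A_u$ spans an edge, and your two linear sweeps implement this test in $O(m+n)$.

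The paper does not prove this lemma at all; it simply cites Andrade, Resende and Werneck \cite{arw12} for the result (stated there for independent sets, which is the complementary formulation). Your bucketing-by-unique-missing-neighbor idea is essentially the same mechanism those authors use --- in their language a vertex outside a maximal independent set is ``$1$-tight'' when it has exactly one neighbor inside, and a $2$-improvement corresponds to two non-adjacent $1$-tight vertices sharing that neighbor. So your proof is a faithful self-contained version of the cited argument, with the pleasant extra of the sum/XOR trick to recover the missing label in $O(1)$.
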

Putting them together we may find a 2-maximal clique.
\begin{lem}\label{comp1}
There is a positive constant $C_{\mathrm{2-max}}$ such that given any non-empty graph on $n$ vertices and $m$ edges with maximum clique size $\omega$ one may find a 2-maximal clique in 
$C_{\mathrm{2-max}}\omega(m+n)$ time.
\end{lem}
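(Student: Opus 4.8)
The plan is to build the 2-maximal clique incrementally by repeatedly applying the improvement subroutines of Lemmas~\ref{1-imp} and~\ref{2-imp}, and to control the running time by tracking the size of the current clique as the graph invariant $\varphi$. Concretely, I would start from any single vertex as a clique of size $1$, and then loop: first apply 1-improvements until the clique is 1-maximal, then look for a 2-improvement; if a 2-improvement is found, apply it and return to the 1-improvement phase; if none is found, the clique is 2-maximal and we stop.

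The key observation making this terminate quickly is that every improvement strictly increases the size of the current clique: a 1-improvement deletes $0$ vertices and adds $1$, so it raises the size by exactly $1$, and a 2-improvement deletes $1$ and adds $2$, also raising the size by exactly $1$. Since the clique size is bounded above by $\omega$, there can be at most $\omega - 1$ improvements in total over the whole run. Thus the algorithm performs at most $\omega$ phases, and within each phase we do a bounded number of subroutine calls: in the worst case one 1-improvement search that succeeds (cost $C_{\mathrm{1-imp}}(m+n)$ by Lemma~\ref{1-imp}), or, once the clique is 1-maximal, one 2-improvement search (cost $C_{\mathrm{2-imp}}(m+n)$ by Lemma~\ref{2-imp}). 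Slightly more carefully, one should note that between two consecutive 2-improvement searches there is at most one successful 1-improvement (because a successful 1-improvement is immediately followed by another 1-improvement search, and the total number of successful improvements of either kind is at most $\omega-1$), so the total number of subroutine calls is $O(\omega)$, each costing $O(m+n)$. Setting $C_{\mathrm{2-max}}$ to be a suitable constant depending on $C_{\mathrm{1-imp}}$ and $C_{\mathrm{2-imp}}$ then yields the bound $C_{\mathrm{2-max}}\,\omega(m+n)$.

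The main subtlety — not really an obstacle, but the point to be careful about — is the bookkeeping that a single failed 1-improvement search is never charged more than once per phase and that a failed 2-improvement search only happens once (at the very end), so that the naive count of ``$O(\omega)$ phases times $O(1)$ searches each'' is actually correct rather than hiding an extra factor. In particular one must check that after a 2-improvement the clique need not be re-tested for 1-maximality more than the amortized count allows, which follows because each such re-entry into the 1-improvement phase is paid for by the size increase just achieved. Everything else is the routine combination of the two cited subroutine bounds, and since the paper has flagged this lemma as ``easy to prove,'' the write-up can stay at the level of this amortization argument.
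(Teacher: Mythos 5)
Your proposal is correct and follows essentially the same route as the paper: start from a trivial clique, alternate 1-improvement and 2-improvement searches, and bound the total number of subroutine calls by $O(\omega)$ since each successful improvement increases the clique size by exactly one. Your amortization bookkeeping for the failed searches is slightly more careful than the paper's one-line argument, but the underlying idea is identical.
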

\begin{proof}
Start with an empty clique and use Lemma~\ref{1-imp} and  Lemma~\ref{2-imp} until the clique is 2-maximal. The number of applications of them is in total at most $\omega,$ as that is the maximum size of a clique in $G.$ Set $C_{\mathrm{2-max}} = \max(C_{\mathrm{1-imp}} ,C_{\mathrm{2-imp}}).$
\end{proof}
\begin{lem}
If 
\begin{itemize}
\item[-] ${\bf H}$ is a set of graphs,
\item[-]  the class of ${\bf H}$--free graphs is  $\chi$-bounded by $f_{\bf H},$ and
\item[-]  there exist a positive constant $C_{\bf H}$ such that given any non-empty graph~$G$ on $n$ vertices and $m$ edges with maximum clique size $\omega$ one may find a
$f_{\bf H}(\omega)$--coloring of $G$ in $C_{\bf H}\omega(m+n)$ time,
\end{itemize}
then
\begin{itemize}
\item[-] the class of  $K_2 \cup {\bf H  } $--free graphs is $\chi$-bounded by
\[
f_{K_2 \cup {\bf H  } }(\omega) = \sum_{k=1}^{\omega}  (\omega-k+1)  f_{{\bf H  } }(k),
\]
\item[-] and there exist a positive constant $C_{K_2 \cup {\bf H  } }$ such that given any non-empty graph~$G$ on $n$ vertices and $m$ edges with maximum clique size $\omega$ one may find a
$f_{K_2 \cup {\bf H  } }(\omega)$--coloring of $G$ in $C_{K_2 \cup {\bf H  } }\omega(m+n)$ time.
\end{itemize}
\end{lem}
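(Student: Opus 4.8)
The plan is to make the proof of Lemma~\ref{K2extend} algorithmic, following its structure line by line, but replacing the one expensive ingredient---finding a \emph{maximum} clique---by finding a 2-maximal clique via Lemma~\ref{comp1}. First I would run the subroutine of Lemma~\ref{comp1} on $G$ to obtain a 2-maximal clique $K=\{1,2,\ldots,k\}$ in time $C_{\mathrm{2-max}}\,\omega(m+n)$. The key point is that a 2-maximal clique satisfies facts (1) and (2) above: fact (1) holds because a 2-maximal clique is in particular 1-maximal, hence maximal; fact (2) holds because a vertex $u\in K$ with two adjacent $v,w\in V(G)\setminus K$ each adjacent to all of $K\setminus u$ would give a 2-improvement (delete $u$, add $v$ and $w$). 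These are exactly the two properties of a \emph{maximum} clique that the proof of Lemma~\ref{K2extend} actually used: fact (1) guarantees every outside vertex has a non-edge to $K$, so the $D_i$ partition the ``one non-edge'' vertices; and fact (2) is what forces $D_i'$ to be independent. So the combinatorial skeleton goes through verbatim with $k$ in place of $\omega(G)$, except that the clique-extension argument now only yields $\omega(G[C_{i,j}])\le k-j+2\le \omega(G)-j+2$ and $\omega(G[D_i])\le k-j+\ldots$, which is fine since $f_{\bf H}$ is monotone; the resulting coloring uses at most $\sum_{k'}(k-k'+1)f_{\bf H}(k')\le f_{K_2\cup{\bf H}}(\omega(G))$ colors.

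Next I would describe the construction of the sets explicitly and bound the time. The sets $C_{i,j}'$, $C_{i,j}$, $D_i'$, $D_i$ are defined by membership conditions testing adjacency to the $k\le\omega$ clique vertices; scanning the adjacency lists of the clique vertices lets one compute, for each $v\in V(G)\setminus K$, the set of clique vertices it misses, in $O(\omega(m+n))$ total time (or one pass over all edges per clique vertex). From that data the partition into the $C_{i,j}$ and $D_i$ is read off in $O(\omega^2 + n)$ time. Coloring each $D_i$ costs $O(|D_i|)$ since it is independent. For each nonempty $C_{i,j}$ one recursively applies the assumed ${\bf H}$-coloring subroutine to $G[C_{i,j}]$, whose clique number is at most $\omega-j+2$; by hypothesis this costs at most $C_{\bf H}\,(\omega-j+2)\,(m_{i,j}+n_{i,j})$ where $m_{i,j},n_{i,j}$ are the edge and vertex counts of $G[C_{i,j}]$. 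Since the $C_{i,j}$ are disjoint, $\sum_{i,j} n_{i,j}\le n$ and $\sum_{i,j} m_{i,j}\le m$, so the total recursive cost is at most $C_{\bf H}\,\omega\,(m+n)$ (using $\omega-j+2\le\omega$). Finally the color classes of the pieces must be relabelled with disjoint palettes and merged, which is another $O(n)$ pass. Summing, the whole procedure runs in $\big(C_{\mathrm{2-max}} + C_{\bf H} + C_0\big)\,\omega\,(m+n)$ for a suitable absolute constant $C_0$ absorbing the linear bookkeeping, so we may take $C_{K_2\cup{\bf H}} = C_{\mathrm{2-max}} + C_{\bf H} + C_0$.

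The main obstacle is the bookkeeping around the $O()$-notation caveat flagged at the start of Section~6: one cannot simply write ``each step is $O(m+n)$'' and add up $\omega^2$ of them, because the $C_{i,j}$-recursions must be charged against \emph{disjoint} subgraphs rather than against $\omega^2$ copies of $G$. The care needed is to (a) phrase every subroutine's cost as an explicit $C\cdot\varphi(G)(m+n)$ bound with $\varphi$ a genuine graph invariant, (b) observe that $\sum_{i<j}(m_{i,j}+n_{i,j})\le m+n$ so the recursion telescopes into a single factor of $(m+n)$, and (c) handle the empty-graph and single-vertex corner cases separately (e.g.\ graphs with no edges, or a $C_{i,j}$ that is empty, where the ``$(m+n)$'' and the invariant may degenerate) so that the stated bound is literally true and not merely asymptotic. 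Apart from this, one should double-check that 2-maximality really does deliver facts (1) and (2)---which it does, as argued above---and that the monotonicity of $f_{\bf H}$ in $\omega$ (clear from its definition as a sum of nonnegative terms increasing in the upper limit) is all that is needed to keep the color count at $f_{K_2\cup{\bf H}}(\omega(G))$ even though we used a possibly-submaximum clique.
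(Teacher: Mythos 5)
Your proposal is correct and follows essentially the same route as the paper: run the 2-maximal-clique subroutine of Lemma~\ref{comp1} in place of a maximum clique, observe that 2-maximality supplies exactly facts (1) and (2) needed for the $D_i$/$C_{i,j}$ decomposition of Lemma~\ref{K2extend}, color the independent $D_i$ in linear time, and charge the recursive ${\bf H}$-colorings against the disjoint $C_{i,j}$ so the costs telescope into a single $C_{\bf H}\omega(m+n)$ term. The only (harmless) slip is the intermediate bound $\omega(G[C_{i,j}])\le k-j+2$, which need not hold for a sub-maximum clique; the clique-extension argument directly gives $\omega(G[C_{i,j}])\le\omega(G)-j+2$, which is all you use.
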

\begin{proof}
The statements on $\chi$--boundedness are all in Lemma~\ref{K2extend}. The only new thing in Lemma~\ref{comp1} are the complexity theoretic statements. We follow the proof of Lemma~\ref{K2extend} and work out the time used. This starts off with finding a 2-maximal clique instead of a maximum clique. It is found in time $C_{\mathrm{2-max}}\omega(m+n)$ according to Lemma~\ref{comp1}.

Let $\tilde\omega$ be the cardinality of the 2-maximal clique. The next step is to find the sets $C_{i,j}$ for $1\leq i < j \leq \tilde\omega$ and $D_i$ for $1\leq i \leq \tilde\omega.$ It is straightforward to do that in $C'\omega(m+n)$ time, for some constant $C'.$ Actually a minor adaptation of the algorithm to find the 2-maximal clique provides this information almost for free.

Each $D_i$ is independent, and coloring all of them is linear in the number of vertices, say $C''n.$

Now let $n_{i,j}$ and $m_{i,j}$ be the number of vertices and edges in each $C_{i,j},$ respectively. Coloring all of them is done in
\[
\sum_{1\leq i < j \leq \tilde\omega}  C_{\bf H}\omega(m_{i,j}+n_{i,j}) \leq C_{\bf H}\omega(m+n)
\]
time. In total at most $C_{K_2 \cup {\bf H  } }\omega(m+n)$ time is used, where essentially one may set $C_{K_2 \cup {\bf H  } } = C_{\mathrm{2-max}} + C' + C'' + C_{\bf H}.$
\end{proof}

A minor detail above is that if  $\tilde\omega<\omega$ then a better chromatic bound is actually achieved:
\[ \sum_{k=1}^{\tilde \omega}  (\omega-k+1)  f_{{\bf H  } }(k) \]

In the same way as Lemma~\ref{comp1} is the complexity version of Lemma~\ref{K2extend}, the following Theorem~\ref{comp2} is the complexity version of Theorem~\ref{bigthm}.
\begin{thm}\label{comp2}
For any integers $d\geq 0$ and $n\geq 2(d+1)$ there is a constant $C$ such that every ${\bf B}_{n,d}$--free graph $G$ on $n'$ vertices and $m'$ edges is 
$g_{n,d}(\omega(G))$--colorable in $C\omega(G)(m'+n')$ time.
\end{thm}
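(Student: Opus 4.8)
The plan is to prove Theorem~\ref{comp2} by induction on $d$, mirroring exactly the induction in Theorem~\ref{bigthm} but carrying a running time bound of the shape $C\omega(m'+n')$ through each step. The crucial point, as already set up in the complexity version of Lemma~\ref{K2extend}, is that one never computes a maximum clique: instead one computes a $2$--maximal clique in time $C_{\mathrm{2-max}}\omega(m'+n')$ by Lemma~\ref{comp1}. The two properties (1) and (2) of a $2$--maximal clique are precisely what is needed so that the sets $C_{i,j}$ and $D_i$ behave as in the proof of Lemma~\ref{K2extend}: condition (1) guarantees every outside vertex has a non-edge to the clique (so the $D_i$ partition the ``one non-edge'' vertices), and condition (2) guarantees the $D_i'$ are independent, so each $D_i$ needs only one color. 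Since each outside vertex is sorted into exactly one $C_{i,j}$ or $D_i$, and the sorting plus the independence-set colorings are linear, the only recursive cost is coloring the subgraphs $G[C_{i,j}]$.

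The base case $d=0$ is immediate: a ${\bf B}_{n,0}$--free graph has maximum degree at most $n-2$, so a greedy coloring in one pass over the adjacency lists produces an $(n-1)$--coloring in $O(m'+n')$ time, which is $\le C\,\omega(m'+n')$ for a suitable $C$ since $\omega\ge 1$. For the induction step with $d>0$, I would follow the proof of Theorem~\ref{bigthm} verbatim on the combinatorial side: find a $2$--maximal clique on $\{1,\dots,\tilde\omega\}$ with $\tilde\omega\le\omega$, form the $C_{i,j}$ and $D_i$, observe (exactly as in Theorem~\ref{bigthm}) that $G[C_{i,j}]$ is ${\bf B}_{\max(n-j,2d),\,d-1}$--free with $\omega(G[C_{i,j}])\le\omega-j+2$, color each $D_i$ with one color, and recursively color each $G[C_{i,j}]$ using the induction hypothesis applied with parameters $(\max(n-j,2d),d-1)$. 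Because the values of $n$ and $d$ that appear in the recursion range over a finite set determined only by the original $n,d$ (for a fixed $n,d$ there are only finitely many pairs $(\max(n-j,2d),d-1)$ with $2\le j\le n$), we may take $C$ to be the maximum of $C_{\mathrm{2-max}}$, a linear-sorting constant $C'$, a linear independence-coloring constant $C''$, and the finitely many constants supplied by the induction hypothesis for those smaller parameter pairs.

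The running-time accounting is the routine but slightly delicate part. Writing $n_{i,j},m_{i,j}$ for the vertex and edge counts of $G[C_{i,j}]$, the $C_{i,j}$ are pairwise disjoint and contained in $V(G)\setminus\{1,\dots,\tilde\omega\}$, so $\sum_{i<j}(n_{i,j}+m_{i,j})\le n'+m'$. The recursive coloring of all the $C_{i,j}$ therefore costs at most
\[
\sum_{1\le i<j\le\tilde\omega} C\,\omega(G[C_{i,j}])\,(m_{i,j}+n_{i,j})
\;\le\; C\,\omega\sum_{1\le i<j\le\tilde\omega}(m_{i,j}+n_{i,j})
\;\le\; C\,\omega(m'+n'),
\]
since $\omega(G[C_{i,j}])\le\omega$. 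Adding the $C_{\mathrm{2-max}}\omega(m'+n')$ for the $2$--maximal clique, the $C'\omega(m'+n')$ to extract the $C_{i,j}$ and $D_i$, and the $C''n'$ to color the $D_i$, the total is bounded by $C\omega(m'+n')$ provided $C$ was chosen (as above) to dominate the sum of these constants together with the inductive constant. The correctness of the resulting coloring and the bound $g_{n,d}(\omega)$ on the number of colors used is exactly the computation already carried out in Theorem~\ref{bigthm} via Lemma~\ref{doubleBumpLemma}, with the harmless improvement that if $\tilde\omega<\omega$ one in fact uses $\sum_{k=1}^{\tilde\omega}(\omega-k+1)g_{\max(n+k-\omega-2,2d),d-1}(k)\le g_{n,d}(\omega)$ colors.

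The main obstacle is not any single hard idea but the bookkeeping needed to make the constant $C$ well-defined: the recursion branches into subproblems with \emph{different} index pairs $(n,d)$, so one must first observe that only finitely many such pairs arise from a fixed $(n,d)$, fix $C$ as a maximum over that finite family (plus the $O(m+n)$ subroutine constants), and only then run the induction — otherwise the argument would attempt to take a maximum over an unbounded family of constants. A secondary subtlety, already flagged in the paper's preamble to Section~6, is that the ``$O(m+n)$'' subroutines genuinely need the $+n$ term (not merely $+m$) to cover sparse or degenerate inputs, and that additive-constant sums like $C_{\mathrm{2-max}}+C'+C''+\cdots$ only make sense once one has committed to the explicit $C\varphi(G)(m+n)$ formulation rather than bare $O(\cdot)$ notation; carrying the explicit constant throughout, as done above, sidesteps that issue.
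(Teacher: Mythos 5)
Your proposal is correct and takes essentially the same route as the paper: induction on $d$, replacing the maximum clique by a $2$-maximal one found via Lemma~\ref{comp1}, a first-fit coloring for the base case $d=0$, and an additive accumulation of the per-level constants. Your explicit accounting of the disjointness of the $C_{i,j}$ and of the finitely many inductive constants is in fact somewhat more careful than the paper's one-line ``set $C=C'+dC''$.''
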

\begin{proof}
The existence of the coloring is the content of Theorem~\ref{bigthm}. To be proved is the complexity result. In the proof of Theorem~\ref{bigthm} the initial part is that graphs of constant maximal degree are colored optimally. That can be done in $C'(m'+n')$ time using a first-fit coloring. After that, a version of Lemma~\ref{K2extend} with refined bookkeeping is applied $d$ times. From Lemma~\ref{comp1} we know that each application is done faster than $C''\omega(G)(m'+n')$ for some $C''.$ Set $C=C'+dC''.$
\end{proof}

We also get complexity versions of Corollaries \ref{mainCor1} and \ref{mainCor2}.

\begin{cor}\label{n3} Let $i$ and $j$ be positive integers. Any graph $G$ on $n$ vertices and clique number $\omega$ with $\beta_{i,j}(I_G) = 0$ has chromatic number
\[ \chi \leq {\omega-1+2(j-i-2) \choose 2(j-i-2)} +  {j-2 \choose 2(j-i-2)+1} \]
and such a coloring may be found in $O(n^3)$ time. 
\end{cor}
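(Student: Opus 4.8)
The plan is to assemble Corollary~\ref{n3} from pieces already in place, treating it as the ``$O(n^3)$ wrapper'' around Corollary~\ref{mainCor1}. The chromatic bound itself is nothing new: it is exactly the content of Corollary~\ref{mainCor1}, which in turn rests on Theorem~\ref{mainThm} and Proposition~\ref{prepp}. So the only real work is the running time, and here the key observation is the identification made in the proof of Theorem~\ref{mainThm}: a graph $G$ with $\beta_{i,j}(I_G)=0$ is ${\bf B}_{j,\,j-i-2}$--free. Thus I would set $d = j-i-2$ (a fixed nonnegative integer, since we may assume $j\geq 2i+2$, hence $j\geq i+3$ in the nontrivial range) and $N=j$, and invoke Theorem~\ref{comp2}: there is a constant $C$, depending only on $N$ and $d$ (equivalently, only on $i$ and $j$), such that every ${\bf B}_{N,d}$--free graph on $n'$ vertices and $m'$ edges is $g_{N,d}(\omega)$--colorable in $C\,\omega(m'+n')$ time.

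Next I would convert that bound into the stated $O(n^3)$. We have $m' \leq \binom{n}{2} = O(n^2)$ and $\omega \leq n$, so $C\,\omega(m'+n') = O(n\cdot n^2) = O(n^3)$, with the implied constant absorbing $C$ (which is a function of the fixed parameters $i,j$ only). Finally I would note that the coloring produced by Theorem~\ref{comp2} uses at most $g_{j,\,j-i-2}(\omega)$ colors, and by Proposition~\ref{prepp} this is at most $\binom{\omega-1+2(j-i-2)}{2(j-i-2)} + \binom{j-2}{2(j-i-2)+1}$, which is precisely the claimed bound. One small bookkeeping point worth stating explicitly: the input graph is only \emph{promised} to satisfy $\beta_{i,j}(I_G)=0$, so we do not need to verify this hypothesis; we simply run the algorithm of Theorem~\ref{comp2} for ${\bf B}_{j,\,j-i-2}$--free graphs, whose correctness is guaranteed on such inputs.

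I do not anticipate a genuine obstacle here, since all the substantive work — the combinatorial partition, the recursive coloring, and the complexity analysis of the recursion — has been done in Theorem~\ref{bigthm} and Theorem~\ref{comp2}. The one place to be slightly careful is the claim that the constant $C$ from Theorem~\ref{comp2} may be treated as absolute when we pass to $O(n^3)$: this is legitimate precisely because $i$ and $j$ are fixed positive integers in the statement, so $d=j-i-2$ and $N=j$ are fixed, and hence $C=C(i,j)$ is a constant from the point of view of the $O(\cdot)$ in $n$. I would make this dependence explicit in one sentence to forestall any confusion, and otherwise the proof is a two-line citation chain: Theorem~\ref{mainThm} $\to$ Theorem~\ref{comp2} $\to$ Proposition~\ref{prepp}, plus the trivial estimate $m=O(n^2)$, $\omega=O(n)$.

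\begin{proof}
By the proof of Theorem~\ref{mainThm}, the hypothesis $\beta_{i,j}(I_G)=0$ implies that $G$ is a ${\bf B}_{j,\,j-i-2}$--free graph; here $d:=j-i-2\geq 0$ and $N:=j\geq 2(d+1)$. Since $i$ and $j$ are fixed, so are $d$ and $N$. By Theorem~\ref{comp2}, there is a constant $C=C(i,j)$ such that $G$ is $g_{j,\,j-i-2}(\omega)$--colorable in $C\,\omega(m+n)$ time, where $m$ is the number of edges of $G$. Using $m\leq \binom{n}{2}=O(n^2)$ and $\omega\leq n$, this running time is $C\,\omega(m+n)=O(n^3)$, the implied constant depending only on $i$ and $j$. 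Finally, by Proposition~\ref{prepp},
\[
g_{j,\,j-i-2}(\omega) \;\leq\; {\omega-1+2(j-i-2) \choose 2(j-i-2)} +  {j-2 \choose 2(j-i-2)+1},
\]
so the coloring found uses at most the stated number of colors.
\end{proof}
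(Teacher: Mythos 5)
Your proposal is correct and follows essentially the same route as the paper: the chromatic bound is Corollary~\ref{mainCor1} (via Theorem~\ref{mainThm} and Proposition~\ref{prepp}), the running time comes from Theorem~\ref{comp2}, and the passage to $O(n^3)$ uses $m\leq n^2$ and $\omega\leq n$ with the constant depending only on the fixed parameters $i,j$. Your version is in fact slightly more explicit than the paper's, which simply chains the same citations.
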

\begin{proof}
The coloring result in Corollary \ref{mainCor1} follows from Theorem~\ref{bigthm}, whose complexity version is Theorem~\ref{comp2} above. According to that, the coloring is found in $C\omega(m+n)$ time where $m$ is the number of edges in $G$ and $\omega$ is the clique number. From $m\leq n^2$ and $\omega\leq n$ it follows that $O(n^3)$ time is used.
\end{proof}
In the same manner:
\begin{cor}\label{n2} Let $i$ and $j$ be positive integers. Any triangle-free graph $G$ on $n$ vertices with $\beta_{i,j}(I_G) = 0$ 
is $(j-1)$--colorable and such a coloring may be found in $O(n^2)$ time.
\end{cor}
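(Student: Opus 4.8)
The plan is to derive Corollary~\ref{n2} as the triangle-free specialization of the machinery already in place, mirroring exactly how Corollary~\ref{n3} is deduced from Theorem~\ref{comp2}. First I would observe that $\beta_{i,j}(I_G)=0$ forces, via Lemma~\ref{central2}, that $G$ is ${\bf B}_{j,\,j-i-2}$--free, so Theorem~\ref{comp2} applies with $n=j$ and $d=j-i-2$: such a $G$ on $n'$ vertices and $m'$ edges is $g_{j,j-i-2}(\omega(G))$--colorable in $C\omega(G)(m'+n')$ time. For a triangle-free graph $\omega(G)\leq 2$, and Proposition~\ref{gTriangleFree} gives $g_{j,j-i-2}(2)=j-1$ (using $j\geq 2(j-i-2+1)$, which holds precisely when $j\leq 2i+2$, i.e. in the only non-trivial range), so the chromatic bound is $\chi(G)\leq j-1$.

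The remaining point is the complexity refinement from $O(n^3)$ down to $O(n^2)$. The saving comes from the fact that in the triangle-free case the recursion in Theorem~\ref{comp2} is only ever invoked at clique number $\leq 2$, so the expensive maximum-clique step collapses. Concretely, the algorithm needs only to find a $2$--maximal clique, but since $\omega\leq 2$ this is trivial: by Lemma~\ref{comp1} it takes $C_{\mathrm{2-max}}\cdot 2\cdot(m'+n')=O(m'+n')$ time. After that the proof of Theorem~\ref{bigthm} peels off $D_i$ (independent, one color each) and the sets $C_{i,j}$, of which there are at most $\binom{2}{2}=1$ (again because the clique has size $\leq 2$); the recursion descends on $d$ but each level only handles graphs still of clique number $\leq 2$, bottoming out in a first-fit coloring of a graph of bounded maximum degree in linear time. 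Thus the total work is $O(d\,(m'+n'))=O(m'+n')$ with the constant depending on $i,j$ but not on $n$. Using $m'\leq \binom{n}{2}=O(n^2)$ then yields the $O(n^2)$ bound; one cannot do better in general because simply reading the adjacency structure of $G$ may require $\Theta(n^2)$ time.

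I would write the proof in two short movements: (i) invoke Theorem~\ref{mainThm} (or Lemma~\ref{central2} plus Theorem~\ref{bigthm}) together with Proposition~\ref{gTriangleFree} to get $\chi(G)\leq j-1$, exactly as Corollary~\ref{mainCor2} does; (ii) invoke Theorem~\ref{comp2} with $n=j$, $d=j-i-2$, note that on a triangle-free input $\omega(G)\leq 2$ so the running time $C\omega(G)(m'+n')$ is $O(m'+n')=O(n^2)$, the constant $C$ depending only on $i$ and $j$.

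The main obstacle is essentially bookkeeping rather than mathematics: one must be careful that the constant $C$ in Theorem~\ref{comp2} genuinely absorbs the dependence on $d=j-i-2$ (it does — $C=C'+dC''$ in that proof) so that for fixed $i,j$ the bound is $O(m'+n')$, and one must confirm that the hypothesis $n\geq 2(d+1)$ of Theorem~\ref{comp2}, i.e. $j\geq 2(j-i-1)$, is exactly the regime $j\leq 2i+2$ in which the statement is non-vacuous (for $j>2i+2$ the betti number and hence the hypothesis forces $G$ to be trivially colorable, or one reduces to a smaller $j$). No deep new idea is needed beyond what Theorem~\ref{comp2} and Proposition~\ref{gTriangleFree} already provide.
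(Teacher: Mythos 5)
Your proposal is correct and follows essentially the same route as the paper: the paper derives Corollary~\ref{n2} ``in the same manner'' as Corollary~\ref{n3}, i.e.\ the coloring bound is Corollary~\ref{mainCor2} (via Proposition~\ref{gTriangleFree}) and the running time comes from Theorem~\ref{comp2}, where $\omega(G)\leq 2$ for triangle-free $G$ reduces $C\omega(G)(m'+n')$ to $O(m'+n')=O(n^2)$. Your side remarks on the hypothesis $j\leq 2i+2$ and on the constant absorbing $d=j-i-2$ are consistent with the paper's standing assumptions.
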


\section{Further considerations}
\subsection{Other classes of forbidden graphs}
Instead of the ${\bf B}_{n,d}$ sets defined in Section 3, one may consider other sets of forbidden graphs as they do not contain all possible obstructions to vanishing syzygies. For example, $C_5$ is a possible addition to ${\bf B}_{5,1}.$ After adding $C_5$ to ${\bf B}_{5,1},$ the recursion
\[{\bf B}_{n,d} = \bigcup_{m=2d}^{n-2} K_{n-m-2}+(K_2\cup {\bf B}_{m,d-1}) \] 
is still valid and may be employed to extend ${\bf B}_{n,d}$ for $d\geq 2$ and $n\geq 5+2(d-1).$
Moreover, the complement of any cycle $C_n$ may be added to  ${\bf B}_{n,1}$ for $n>5.$

Another aspect would be to reduce the size of ${\bf B}_{n,d}$ and sacrifice the bound achieved to get an easier proof. There are at least two natural candidates in that direction. The first one is to set ${\bf B}_{n,0}' = {\bf B}_{n,0}$ and
\[{\bf B}_{n,d}' = K_2\cup {\bf B}_{n-2,d-1}'. \]
The ${\bf B}_{n,d}'$--free graphs are $\chi$--bounded by a polynomial of the same degree as the ${\bf B}_{n,d}$--free graphs, but with much worse coefficients. On the other hand, Theorem~\ref{bigthm} may be replaced with the easier Corollary~\ref{corBoundingSimple}. Completely disregarding the coefficients and only aiming to preserve the degree of the $\chi$--bounding polynomials while simplifying the argument even more, one may replace Corollary~\ref{corBoundingSimple} by the basic inductive step in Wagon's proof of that $pK_2$--free graphs are $\chi$--bounded by a polynomial of degree $2p-2.$

A further reduction would be to start off with ${\bf B}_{n,0}'' = \{ \textrm{the star on $n$ vertices} \}$ and continue with ${\bf B}_{n,d}'' = K_2\cup {\bf B}_{n-2,d-1}''.$ The upside is that each set only contains one graph:
\[ {\bf B}_{n,d}'' =  \{ (d+1)K_2 \cup \textrm{the star on $n-2(d+1)$ vertices} \}. \]
The main downside of this construction is that a star on $m$ vertices is $\chi$--bounded by $k\frac{\omega^{m-2}}{\log^{m-3} (\omega)}$ for some constant $k.$ 
This shows that ${\bf B}_{n,0}''$--free graphs are $\chi$--bounded by $k\frac{\omega^{n-2}}{\log^{n-3} (\omega)},$ while the ${\bf B}_{n,0}'$--free graphs are $\chi$--bounded by $n.$ This factor propagates to that
the ${\bf B}_{n,d}''$--free graphs are $\chi$--bounded by $\frac{k}{n-2(d+1)}\frac{\omega^{n-2(d+1)-2}}{\log^{n-2(d+1)-3} (\omega)}$ times whatever the ${\bf B}_{n,d}'$--free graphs are $\chi$--bounded by. Thus, the ${\bf B}_{n,d}''$--free graphs also proves polynomially $\chi$--bounded, but by a worse degree.

A disjoint union of stars is called a \emph{star-forest}. It was proved by Scott, Seymour and Spirkl \cite{sss} that for any star-forest $H,$ the $H$--free graphs are polynomially $\chi$--bounded. They did not make any effort in optimizing degrees of the polynomials, and it does not seem straight-forward to use their result right off to improve on our results. What perhaps would be feasible is to exclude many star-forests at once. Define
\[
{\bf B}_{n,d}^\dagger = \{ \textrm{star-forests on $d+1$ components and $n$ vertices}  \}.
\]
Clearly the ${\bf B}_{n,d}^\dagger$--free graphs are polynomially $\chi$--bounded as every forbidden graph is a star-forest. It's an interesting research problem to find a better chromatic bound for the ${\bf B}_{n,d}^\dagger$--free graphs. The independence complex of a star-forest on $d+1$ components collapses onto the $d$--dimensional boundary of a cross polytope, providing the syzygy obstruction.

\subsubsection*{Acknowledgements}
The author would like to thank Ignacio García-Marco, Philippe Gimenez and Kolja Knauer.

\end{document}